\documentclass[11pt]{amsart}
\usepackage[utf8]{inputenc}
\usepackage{amsthm}
\usepackage{amsmath}
\usepackage{amsfonts}
\usepackage{latexsym}
\usepackage[margin=1in]{geometry}
\usepackage{enumerate}
\usepackage{csquotes}
\usepackage{graphicx}
\usepackage{tikz-cd}
\usepackage{comment}
\usepackage{appendix}
\usepackage{hyperref}
\usepackage{bm}
\usepackage{amssymb}
\emergencystretch=\maxdimen
\DeclareMathOperator{\Rm}{Rm}
\DeclareMathOperator{\Ric}{Ric}
\DeclareMathOperator{\sing}{sing} 
\DeclareMathOperator{\Rc}{Rc}

\makeatletter
\newcommand*{\rom}[1]{\rm {\expandafter\@slowromancap\romannumeral #1@}}
\makeatother

\usepackage[style=alphabetic,doi=false,isbn=false,url=false,eprint=false]{biblatex}
\numberwithin{equation}{section}
\newtheorem{Theorem}{Theorem}[section]
\newtheorem{Proposition}[Theorem]{Proposition}
\newtheorem{Lemma}[Theorem]{Lemma}

\newtheorem{maintheorem}{Theorem}

\theoremstyle{definition}

\newtheorem{Remark}[Theorem]{Remark}
\newtheorem{Defn}[Theorem]{Definition}

\def \IP {\mathbb{P}}

\def \ii {\sqrt -1}
\usepackage{xcolor}

\title{Finite time singularities of the Ricci flow on compact K\"ahler surfaces are of Type I}
\author{Charles Cifarelli}
\address{Department of Mathematics, Tufts University, 177 College Avenue, Medford, MA 02155}
\email{Charles.Cifarelli@tufts.edu }

\author{Ronan J.~Conlon}
\address{Department of Mathematical Sciences, The University of Texas at Dallas, Richardson, TX 75080}
\email{ronan.conlon@utdallas.edu}

\author{Max Hallgren}
\address{
Department of Mathematics, The University of Arizona, 617 N. Santa Rita Ave., Tucson, AZ 85721}
\email{mhallgren@arizona.edu}

\author{Junsheng Zhang}
\address{Courant Institute of Mathematical Sciences, New York University, 251 Mercer St, New York,
NY 10012}
\email{jz7561@nyu.edu}

\date{}

\begin{document}

\begin{abstract}
    For any volume-collapsing finite-time singularity of a K\"ahler-Ricci flow on a compact K\"ahler surface, we show the flow satisfies a Type I curvature bound and classify the corresponding tangent flows. Combined with previous results, this shows that any finite-time singularity of a K\"ahler-Ricci flow on a compact K\"ahler surface is of Type I.
\end{abstract}

\maketitle

\setcounter{tocdepth}{1}

\section{Introduction}

\subsection{Background}

Suppose $(M^2,\widetilde{J},(\widetilde{g}_t)_{t\in [0,T)})$ is a solution of the K\"ahler-Ricci flow
\begin{equation*}
    \partial_t \widetilde{g}_t = -\Ric(\widetilde{g}_t)
\end{equation*}
on a compact K\"ahler surface. If the flow develops a singularity at time $T<\infty$, then there are three possible qualitative behaviors of the flow:
\begin{enumerate}
    \item (Noncollapsing) $\lim_{t \nearrow T}\operatorname{vol}_{\widetilde{g}_t}(M)>0$,

    \item (Collapsing) $\lim_{t \nearrow T} \operatorname{vol}_{\widetilde{g}_t}(M)=0$, $\limsup_{t \nearrow T} \operatorname{diam}_{\widetilde{g}_t}(M)>0$,

    \item (Extinction) $\limsup_{t\nearrow T} \operatorname{diam}_{\widetilde{g}_t}(M)=0.$
\end{enumerate}
By \cite{songnotes,TZ}, there is a surjective holomorphic map $\widetilde{\pi}:M\to Y$ with connected fibers onto a K\"ahler manifold $Y$ such that the limiting K\"ahler class $[\widetilde{\omega}_T]=[\widetilde{\omega}_0]-Tc_1(M)$ is equal to $\widetilde{\pi}^{\ast}[\omega_Y]$ for some K\"ahler metric $\omega_Y$ on $Y$. 

The extinction case, where $Y$ is a point, was the first of the above to be fully understood. In this case, it was shown in \cite{SongExt,TZ} that $M$ is a Fano manifold and $[\omega_0]=Tc_1(M)$. By \cite{tiancalabi,Koiso,WangZhu,SesumTian,TianZhu07,ChenWangsurfaces}, the flow satisfies the Type I curvature bound 
\begin{equation} \label{TypeIDef} \sup_M|\operatorname{Rm}_{\widetilde{g}_t}|_{\widetilde{g}_t} \leq \frac{C}{T-t} \qquad \text{ for all } t\in [0,T)
\end{equation}
for some constant $C>0$, and the rescalings $(M,(T-t)^{-1}\widetilde{\omega}_t)$ converge in the smooth Cheeger-Gromov sense as $t\nearrow T$ to the unique (up to biholomorphism, see \cite{BandoMabuchi,TianZhu02}) shrinking K\"ahler-Ricci soliton on $M$. 

The noncollapsing case corresponds to $\dim_{\mathbb{C}}(Y)=2$, where $\widetilde{\pi}$ is the blowdown map along finitely many disjoint $(-1)$-curves in $M$. It was shown in \cite{CHM} that the Type I bound \eqref{TypeIDef} also holds in this situation. Moreover, for any basepoint $p\in M$ on a blown-down $(-1)$-curve, \cite{CCD} states that the rescaled metrics $(M,(T-t)^{-1}\widetilde{\omega}_t,p)$ converge in the smooth pointed Cheeger-Gromov sense to the unique (up to biholomorphism) shrinking K\"ahler-Ricci soliton, constructed in \cite{FIK}, on the total space of $\mathcal{O}_{\mathbb{P}^1}(-1)$. By \cite{SW1}, there is moreover a canonical K\"ahler-Ricci flow $(Y,(\widehat{g}_t)_{t\in [T,T')})$ on $Y$ satisfying $\widetilde{\pi}^{\ast}\widehat{g}_T=\lim_{t \nearrow T}\widetilde{g}_t$  in the sense of currents, such that the flow through singularities is continuous in the Gromov-Hausdorff sense. In \cite{CHL}, it was shown that $(\widehat{g}_t)_{t\in (T,T')}$ satisfies an analogous Type I curvature bound flowing out of the singularity, and the rescalings $(Y,(t-T)^{-1}\widehat{g}_t,\widetilde{\pi}(p))$ converge in the pointed Cheeger-Gromov sense to a K\"ahler-Ricci expander on $\mathbb{C}^2$ constructed in \cite{Caosoliton}. 

The remaining case is therefore that of collapsing. In the special case of a ruled surface with $U(2)$ symmetry, it was shown in \cite{SWHirz,Fong} that the flow subsequentially converges in the Gromov-Hausdorff sense to a multiple of the Fubini-Study metric on $\mathbb{P}^1$, that the flow satisfies the Type I curvature bound (\ref{TypeIDef}), and that the tangent flow based at any point is the standard metric on $\mathbb{C}\times \mathbb{P}^1$. Without the symmetry assumption, it is known from \cite{SSW} that whenever $\widetilde{\pi}:M\to Y$ is a $\mathbb{P}^1$-bundle, the metrics $(M,d_{\widetilde{g}_t})$ subsequentially converge as $t\nearrow T$ to a metric space homeomorphic to $Y$. Furthermore, it was recently shown in \cite{XuZhang,JScollapsing} that under this same assumption, \eqref{TypeIDef} holds, and for any $p\in M$, the rescalings $(M,(T-t)^{-1}\widetilde{g}_t,p)$ converge in the pointed Cheeger-Gromov sense to the cylindrical metric on $\mathbb{C}\times \mathbb{P}^1$ (see also \cite{Licollapsing}). In general, however, $\widetilde{\pi}$ is a fibration with generic fiber $\mathbb{P}^1$, rather than a $\mathbb{P}^1$-bundle. By \cite{XuZhang}, a Type I curvature bound holds near any regular fiber of $\widetilde{\pi}$, so it remains only to understand the behavior near a singular fiber. Our main result in this paper is that the flow actually satisfies \eqref{TypeIDef} globally, and that the unique singularity model based at any singular fiber is the K\"ahler-Ricci shrinker on the blowup of $\mathbb{C}\times \mathbb{P}^1$ constructed in \cite{BCCD}.

\subsection{Statements of main results}

We suppose that $(M^2,\widetilde{J},(\widetilde{g}_t)_{t\in [0,T)})$ is a K\"ahler-Ricci flow on a compact K\"ahler surface which develops a singularity at time $T<\infty$. The collapsing case is equivalent to the following volume behavior at the singular time:
\begin{equation} \label{eq:volassumption} 0<\lim_{t\nearrow T} (T-t)^{-1}{\rm Vol}_{\widetilde{g}_t}(M) <\infty.\end{equation}
By \cite[Theorem 1.4]{TZ}, $M$ then admits the structure of a Fano fibration $\widetilde{\pi}:M \to Y$ over a smooth curve $Y$, and there is a K\"ahler metric $\omega_Y$ on $Y$ such that
\begin{equation} \label{eq:fibrationclass}
    [\widetilde{\omega}_T]=[\widetilde{\omega}_0]-Tc_1(M)=\widetilde{\pi}^{\ast}[\omega_Y].
\end{equation}
Because $\widetilde{\pi}$ is proper, the set $U\subseteq Y$ of regular values of $\widetilde{\pi}$ has finite complement by Remmert's proper mapping theorem, and the restriction of $\widetilde{\pi}$ to $\widetilde{U}:=\widetilde{\pi}^{-1}(U)$ is a holomorphic $\mathbb{P}^1$-bundle.

We may now state our main theorem, which gives a description of the small-scale behavior of $M$ at any point at the first singular time in terms of the map $\widetilde{\pi}$.  

\begin{maintheorem} \label{thm:main} 
A collapsing Ricci flow on a compact K\"ahler surface develops a Type I singularity. More precisely, in the situation described above, we have:
\begin{enumerate}[(i)]
     \item \label{thm:main:TypeI} The Type I curvature bound \eqref{TypeIDef} holds.

    \item \label{thm:main:regular} For any $p\in \widetilde{U}$ and any sequence $t_i \nearrow T$, there exists $p_{\infty} \in \mathbb{C}\times \mathbb{P}^1$ such that, after passing to a subsequence, the following convergence holds in the smooth pointed Cheeger-Gromov sense:
    \begin{equation*}
        (M,(T-t_i)^{-1}\widetilde{g}_{t_i},p)\to (\mathbb{C}\times \mathbb{P}^1,g_{\mathbb{C}}+g_{FS},p_{\infty}) \qquad \text{ as } i \to \infty,
    \end{equation*}
    where $g_{FS}$ is the Fubini-Study metric on $\mathbb{P}^1$.
    \item \label{thm:main:singular} For any $p\in M\setminus \widetilde{U}$ and any sequence $t_i \nearrow T$, there exists $p_{\infty} \in M_{\operatorname{BCCD}}$ such that, after passing to a subsequence, 
    the following convergence holds in the smooth pointed Cheeger-Gromov sense:
    \begin{equation*}
        (M,(T-t_i)^{-1}\widetilde{g}_{t_i},p)\to (M_{\operatorname{BCCD}},g_{\operatorname{BCCD}},p_{\infty}) \qquad \text{ as } i\to \infty,
    \end{equation*}
    where $(M_{\operatorname{BCCD}},g_{\operatorname{BCCD}})$ is the K\"ahler-Ricci shrinker constructed in \cite{BCCD}.
\end{enumerate}
\end{maintheorem}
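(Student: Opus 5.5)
The plan is to reduce everything to a local analysis near the finitely many singular fibers, since near regular fibers the Type I bound and the cylindrical tangent flow are already known from \cite{XuZhang,JScollapsing}. First we describe the singular fibers of $\widetilde{\pi}$. Since the generic fiber is $\mathbb{P}^1$ and the fibers are anticanonically positive, the surface is a blowup of a ruled surface. We argue that each singular fiber is a chain $E_1\cup E_2$ of two $(-1)$-curves meeting transversally at one point. The reason is that $[\widetilde\omega_T]=\widetilde\pi^\ast[\omega_Y]$ forces $K_M\cdot C<0$ for every fiber component $C$, and this excludes curves of self-intersection $\le -2$. Near such a fiber, $M$ is locally modeled on the blowup of $\mathbb{C}\times\mathbb{P}^1$ at one point, which is exactly the underlying complex manifold of $M_{\operatorname{BCCD}}$.

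Second, we set up the rescaled flow $g_i(s)=(T-t_i)^{-1}\widetilde g_{t_i+(T-t_i)s}$ (normalized to the Type I time scale) and work at the level of Bamler's $\mathbb{F}$-convergence theory. By Perelman's $\mu$-entropy bound the flows are uniformly noncollapsed at their own scale, so tangent flows at $(p,T)$ exist as metric solitons with singular set of codimension $\ge 4$. In real dimension $4$ this means the singular set consists of isolated orbifold points. We then show the tangent flow is a smooth complete gradient K\"ahler shrinker. The complex structure passes to the limit on the regular part. The volume asymptotics \eqref{eq:volassumption} together with the class identity \eqref{eq:fibrationclass} show that the fiber areas are comparable to $T-t$, so the limit contains a rational curve of area $\approx 2\pi$ times a fixed constant and splits off or contains a copy of $\mathbb{C}$ directions. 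Orbifold points are excluded by a local computation with the holomorphic fibration, in the same way as in \cite{CHM}.

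Third, we identify the limit. Near a regular fiber, the result of \cite{XuZhang} gives $\mathbb{C}\times\mathbb{P}^1$. Near a singular fiber, the limit is a complete shrinking K\"ahler--Ricci soliton on a surface biholomorphic to the blowup of $\mathbb{C}\times\mathbb{P}^1$ at a point, with bounded curvature, and it is asymptotic to the cylinder $\mathbb{C}\times\mathbb{P}^1$ along its end, since far from the singular fiber we see regular fibers. We then invoke the uniqueness part of the classification of K\"ahler shrinkers on complex surfaces with such asymptotics (Cifarelli--Conlon--Deruelle and \cite{BCCD}) to conclude that the limit is $(M_{\operatorname{BCCD}},g_{\operatorname{BCCD}})$. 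This gives (ii) and (iii) for subsequential limits, provided Type I holds.

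Finally, the Type I bound (i) follows by contradiction. If $\sup|\Rm|(T-t)$ were unbounded, we choose points with $(T-t_i)|\Rm|(p_i,t_i)\to\infty$ and run the above $\mathbb{F}$-limit argument based at $(p_i,T)$ along the scale $T-t_i$. Every such tangent flow has been identified as smooth, either a cylinder or BCCD, and each has bounded curvature. Bamler's $\varepsilon$-regularity, which upgrades $\mathbb{F}$-convergence to smooth convergence near regular points, then bounds $(T-t_i)|\Rm|$, which is a contradiction. Uniformity over basepoints uses compactness of $M$ and the local structure near the finitely many singular fibers.

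The main obstacle is the third step: ensuring that the limit near a singular fiber is smooth and has the correct asymptotics, so that the uniqueness theorem applies. The difficulty is ruling out degenerations in which one $(-1)$-curve shrinks faster than $T-t$ and bubbles off an FIK-type region. I would first try to control the areas of $E_1$ and $E_2$ cohomologically, since each is $\frac12$ of the fiber class area, hence comparable to $T-t$. This would prevent further collapse, and the smoothness of the tangent flow would then follow from \cite{CHM}-type arguments.
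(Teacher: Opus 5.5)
Your description of the singular fibers matches the paper: two $(-1)$-curves meeting once, obtained from $K_M\cdot C<0$ and adjunction. You also locate the difficulty correctly. However, the two load-bearing steps are not carried out, and the mechanisms you propose would not work.

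\textbf{Smoothness of the tangent flow at a singular fiber.} Your area control of $E_1,E_2$ is true (each has $\widetilde\omega_t$-area $(T-t)c_1(M)\cdot E_j$), but it misses the real threat. An orbifold point $\mathbb{C}^2/\Gamma$ of the limit corresponds in $M$ to a region $\mathcal{B}_{i,r}\subseteq\mathcal{V}$ bounded by a copy of $\mathbb{S}^3/\Gamma$. For a Wahl singularity $\frac{1}{n^2}(1,na-1)$ this region can be a rational homology ball (the Milnor fiber), which has no rational $H_2$ and contains no closed holomorphic curves. So neither area bookkeeping nor a local computation with $\widetilde\pi$ detects it, and the argument of CHM does not transfer as is. The paper excludes such points only through the global structure of the shrinker $X$:
\begin{enumerate}
\item[(a)] a Fano fibration $\pi:X\to\mathbb{C}$, which requires non-decay of curvature at infinity, proved from the collapsing assumption via Lu's pseudolocality;
\item[(b)] the bound $Q\le 2$ on central-fiber components, obtained by pushing a negative definite subspace into $H_2(M_{\operatorname{BCCD}})$;
\item[(c)] the relative minimal model $\mathbb{C}\times\mathbb{P}^1$ of the minimal resolution;
\item[(d)] the vanishing $c_1(\mathcal{B}_{i,r})=0$, from $[\widetilde\omega_{i,-1}|_{\mathcal{V}}]=c_1(\mathcal{V})$, an $O(r^4)$ boundary term and integrality, which forces class $T$ and then Wahl singularities;
\item[(e)] a trace count for the central fiber of $\widetilde X\to\mathbb{C}$.
\end{enumerate}
Your assertion that the limit contains a fiber and splits off $\mathbb{C}$ is unjustified, since area bounds give no diameter bound at scale $\sqrt{T-t}$ before Type I is known. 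Likewise, the limit at a singular fiber is not known a priori to live on the blowup of $\mathbb{C}\times\mathbb{P}^1$. The classification only yields ``cylinder or BCCD''. Ruling out the cylinder in (iii) uses the Type I bound, monotonicity for holomorphic curves and no local collapsing, to bound the rescaled diameter of the $(-1)$-curve through $p$.

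\textbf{The Type I step.} As written it fails. Rescaling by $T-t_i$ about varying basepoints $(p_i,t_i)$ does not produce a tangent flow at a fixed point of $M_T$. The $\mathbb{F}$-limit $\mathcal{Z}$ is only an $H_4$-concentrated metric flow and need not be a soliton. By $\varepsilon$-regularity, its Nash entropy at the basepoint is $\le-\epsilon_0$ at every scale, precisely because $(T-t_i)|\Rm|(p_i,t_i)\to\infty$. Tangent flows at a point of $M_T$ control only rescalings based at that point, so neither the classification nor compactness of $M$ says anything about $\mathcal{Z}$.

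The missing idea is the entropy-gap argument. Take a tangent flow $\mathcal{Y}$ of $\mathcal{Z}$ at its basepoint.
\begin{enumerate}
\item[(a)] If $\mathcal{Y}$ is smooth, it is FIK, the cylinder or BCCD. Its curve of self-intersection $0$ or $-1$ would yield a class with $\widetilde\omega_t$-area $\ll T-t$ but $c_1(M)$-degree $1$ or $2$. This is impossible since $[\widetilde\omega_t]=\widetilde\pi^*[\omega_Y]+(T-t)c_1(M)$.
\item[(b)] If $\mathcal{Y}$ is singular, blow up at an orbifold point, where the entropy is $\log\frac{1}{|\Gamma|}\le\log\frac12$. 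Diagonalizing gives $(y_i,s_i)$ with $\mathcal{N}_{y_i,s_i}(\sigma)\le\frac34\log\frac12+\frac14W_{\operatorname{BCCD}}$ at every fixed scale $\sigma$. These entropies converge to $\mathcal{N}_{\widehat\mu}$ for some $\widehat\mu\in M_T$, all of whose tangent flows have entropy at least $W_{\operatorname{BCCD}}$.
\end{enumerate}
The strict inequality $W_{\operatorname{BCCD}}>\log\frac12$ is what closes the argument, and it has no counterpart in your plan.
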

\begin{Remark}
    In \cite{Bam3}, a metric space $(M_T,d_T)$ was defined which roughly serves as the time slice of the flow $(M,(\widetilde{g}_t)_{t\in [0,T)})$ at the singular time $t=T$. In \cite{CHL}, it was shown that $\widetilde{\pi}:M\to Y$ naturally induces a continuous surjective map $\widehat{\pi}:M_T\to Y$. In the course of proving Theorem \ref{thm:main}\eqref{thm:main:regular},\eqref{thm:main:singular}, we show that any tangent flow (in the sense of \cite{Bam3}) of $(M,(\widetilde{g}_t)_{t\in [0,T)})$ based at a point $\mu \in M_T$ is the shrinking cylinder on $\mathbb{C}\times \mathbb{P}^1$ if $\widehat{\pi}(\mu)\in U$, and the BCCD shrinker otherwise.  
\end{Remark}

\subsection{Outline of the paper}

In \S \ref{section:preliminaries}, we review notation and definitions, and state the compactness theorem for K\"ahler-Ricci flows which will be used in the remainder of the paper. In \S \ref{section:regular}, we give an independent proof of the fact that tangent flows near regular fibers of $\widetilde{\pi}$ are $\mathbb{C}\times \mathbb{P}^1$, which has recently been established by other methods in \cite{XuZhang,JScollapsing}. 

In \S \ref{section:singular}, we address the main difficulty in the proof of Theorem \ref{thm:main}, which is showing that any tangent flow of $(M,(\widetilde{g}_t)_{t\in [0,T)})$ at a singular fiber is in fact smooth. This is accomplished by the following steps.
\begin{itemize}
    \item[\S \ref{subsection:topology}] An elementary argument using the adjunction formula identifies a neighborhood of a singular fiber with the BCCD shrinker topologically.

    \item[\S \ref{subsection:fanofibration}] The argument of \cite{SunZhang} shows that $X$ is the total space of a polarized Fano fibration $\pi:X\to \mathcal{C}$ over an affine variety $\mathcal{C}$, hence $X$ has only finitely many orbifold singularities. We also show $X$ is noncompact and is not asymptotic to a K\"ahler cone, which is used to show $\mathcal{C}\cong \mathbb{C}$ and $X$ is smooth away from the central fiber of $\pi$.

    \item[\S \ref{subsection:centralfiber}] The central fiber of $\pi$ has at most two irreducible components. 

    \item[\S \ref{subsection:relativeMM}] Letting $\widetilde{X}$ denote the minimal resolution of $X$, the relative minimal model of $\widetilde{X}\to \mathbb{C}$ is then $\mathbb{P}^1\times \mathbb{C}$. 

    \item[\S \ref{subsection:Wahl}] We show that any singularity of $X$ must be a Wahl singularity, using a technical refinement of \cite[Theorem 5.10(ii)]{CHM}.

    \item[\S \ref{subsection:tangentflowisBCCD}] We show that $X$ has no singularities, by using the previously established facts to compute the trace of the intersection matrix of the central fiber of $\widetilde{X}\to X\to \mathbb{C}$ in two different ways. 
\end{itemize}
At this point, we have classified all possible tangent flows of $(M,(\widetilde{g}_t)_{t\in [0,T)})$. Using this, we complete the proof of Theorem \ref{thm:main} in \S \ref{section:TypeI}, which relies on the following contradiction-compactness argument. Assuming that, by way of contradiction, there is a sequence $(x_i,t_i)\in M\times [0,T)$ with $(T-t_i)|\operatorname{Rm}_{\widetilde{g}}|_{\widetilde{g}}(x_i,t_i)\to \infty$, we can find (after changing basepoints and passing to a subsequence) a region of the flow which is close to a nontrivial orbifold K\"ahler-Ricci shrinker $X_{\infty}$ at scales smaller than the Type I scale. A straightforward argument shows that such $X_{\infty}$ must be singular, so a further change of basepoints gives $(x_i',t_i')$ such that 
\begin{equation*}
\limsup_{i\to \infty}\mathcal{N}_{x_i',t_i'}(\tau)\leq \log \frac{1}{2}
\end{equation*}
for any fixed $\tau \in (0,T)$. 
On the other hand, we may pass to a subsequence so that $\mathcal{N}_{x_i',t_i'}(\tau)\to \mathcal{N}_{\mu}(\tau)$ for some $\mu \in M_T$. Because any tangent flow at $\mu$ has Nash entropy equal to that of the BCCD shrinker or $\mathbb{C}\times \mathbb{P}^1$, and both entropies are strictly larger than $\log(\frac{1}{2})$, we obtain 
\begin{equation*}
    \lim_{\tau \searrow 0} \mathcal{N}_{\mu}(\tau)>\log \frac{1}{2},
\end{equation*}
yielding a contradiction.

\subsection{Acknowledgments} CC is supported by NSF Grant DMS-2506521.
RJC is supported by a Simons Travel Grant.
MH is grateful to Jian Song for helpful discussions.

\subsection{AI disclosure} Most of the mathematical argument in Subsection \ref{subsection:Wahl} was developed by ChatGPT-6 Astra. ChatGPT was also used for literature searches, for a technical refinement of the argument of Proposition \ref{prop:components}, and to check for typos and small mistakes throughout. No part of the manuscript was written by AI.

\section{Preliminaries and notation}
\label{section:preliminaries}

Throughout the remainder of the paper, we fix a solution $(M^2,\widetilde{J},(\widetilde{g}_t)_{t\in [0,T)})$ of the K\"ahler-Ricci flow on a compact K\"ahler surface, and assume that \eqref{eq:volassumption} holds. We let $\widetilde{\pi}:M\to Y$ be as in \eqref{eq:fibrationclass}. Recall that $U \subseteq Y$ denotes the set of regular values of $\widetilde{\pi}$, and $\widetilde{U}=\widetilde{\pi}^{-1}(U)$. 

For any K\"ahler-Ricci flow $(M',(g_t')_{t\in I})$ of complex dimension $2$, defined on a time interval $I\subseteq \mathbb{R}$, we use the following notation. Given $(x,t)\in M'\times I$, we let $K^{g'}(x,t;\cdot,\cdot):M'\times (I\cap (-\infty,t)) \to (0,\infty)$ denote the conjugate heat kernel based at $(x,t)$, and define probability measures $d\nu^{g'}_{x,t;s}:=K^{g'}(x,t;\cdot,s)dg_s'$, where $dg'_s$ denotes the Riemannian volume measure of $(M',g'_s)$. We also use the notation
$$B_{g'}(x,t,r):= \{y\in M'; d_{g'_t}(x,y)<r\}$$
for the geodesic ball with respect to the Riemannian distance $d_{g'_t}$ of $(M',g'_t)$. The notion of a $P^{\ast}$-parabolic neighborhood was introduced in \cite{Bam1}: given $(x_0,t_0)\in M' \times I$, $P^{\ast}(x_0,t_0;A,-T^-,T^+)$ is the set of $(x,t) \in M' \times (I\cap [t_0 - T^-,t_0+T^+])$ satisfying
$$d_{W_1}^{g_{t_0-T^-}'}(\nu_{x_0,t_0;t_0-T^-}^{g'},\nu_{x,t;t_0-T^-}^{g'})<A.$$

For $(x,t)\in M'\times I$, we define the curvature scale
$$r_{{\Rm}}^{g'}(x,t):=\sup\{r>0;|{\Rm}_{g'}|_{g'}\leq r^{-2}\text{ on }B_{g'}(x,t,r)\times([t-r^{2},t+r^{2}]\cap I)\}.$$

For ease of notation, we also let $\widetilde{K},\widetilde{\nu}_{x,t;s},d\widetilde{g}_t$ denote the above quantities when $g'$ is our distinguished flow $(\widetilde{g}_t)_{t\in [0,T)}$. 

\begin{Defn} (See Section 2.6 of \cite{Bam3}) \label{conjsingtime} 
Let $M_T$ denote the set of conjugate heat flows $\mu=(\mu_t)_{t\in [0,T)}$ satisfying
\begin{equation*}
    \lim_{t\nearrow T} \int_M\int_M d_{\widetilde{g}_t}^2(x,y)d\mu_t(x)d\mu_t(y) =0.
\end{equation*}
$M_T$ is equipped with the metric $d_T$ given by
\begin{equation*}
    d_T(\mu^1,\mu^2):= \lim_{t\nearrow T} d_{W_1}^{\widetilde{g}_t}(\mu_t^1,\mu_t^2)
\end{equation*}
for $\mu^i=(\mu_t^i)_{t\in [0,T)}\in M_T$, where $d_{W_1}^{\widetilde{g}_t}$ denotes the 1-Wasserstein distance with respect to the metric $d_{\widetilde{g}_t}$. For $\mu=(\mu_t)_{t\in [0,T)}\in M_T$, write $d\mu_t = (2\pi(T-t))^{-2}e^{-f_t}d\widetilde{g}_t$, and define the Nash entropy based at $\mu$ by
\begin{equation*}
    \mathcal{N}_{\mu}(\tau):= \int_M f_{T-\tau} d\mu_{T-\tau} - 2.
\end{equation*}
We say $(x,t)$ is an $H_4$-center of $\mu$ if $\int_M d_{\widetilde{g}_t}^2(x,y)d\mu_t(y)\leq H_4(T-t)$, where $H_4:= \frac{3}{2}\pi^2+4$. 
\end{Defn}

We refer the reader to \cite[Chapter 4]{BG} for the definition of a K\"ahler orbifold and an overview of its basic properties. Given any orbifold $X$, we let $X_{\operatorname{reg}} \subseteq X$ denote the (dense, open) set of points with trivial isotropy group, so that $(X_{\operatorname{reg}},g)$ is a smooth K\"ahler manifold. 

\begin{Defn} An orbifold K\"ahler-Ricci shrinker $(X,g,J,f)$ consists of a smooth K\"ahler orbifold $(X,g,J)$ along with a smooth function $f\in C^{\infty}(X)$ satisfying 
$${\Rc}+\nabla^2f=g, \hspace{6 mm} \mathcal{L}_{\nabla f}J=0.$$
We say $X$ has isolated singularities if $X \setminus X_{\text{reg}}$ is discrete and the isotropy group of any point is a finite subgroup of $U(2)$ which acts freely on $\mathbb{S}^3$.
\end{Defn}
\begin{Remark} \label{rem:noquasireflections} If $(X,J,g,f)$ is an orbifold K\"ahler-Ricci shrinker with isolated singularities, then no isotropy group of $X$ contains quasi-reflections.
\end{Remark}

Suppose $(M^2,\widetilde{J},(\widetilde{g}_t)_{t\in [0,T)})$ is a K\"ahler-Ricci flow on a compact K\"ahler surface, which develops a singularity at time $T<\infty$. In the following, we recall compactness results proved in \cite[Theorem 2.37]{Bam3} and \cite[Theorem 2.8]{CHM}.

\begin{Theorem} \label{bamconvergence} Given any $\mu \in M_T$ and any sequence $t_i \nearrow T$, set $\widetilde{g}_{i,t}:=(T-t_i)^{-1}\widetilde{g}_{T+(T-t_i)t}$ and $\mu_t^i:=\mu_{T+(T-t_i)t}$ for $t\in [-(T-t_i)^{-1}T,0)$. After passing to a subsequence, there is a metric soliton $(\mathcal{X},(\nu_t)_{t\in (-\infty,0)})$ along with a correspondence $\mathfrak{C}$ such that we have the following $\mathbb{F}$-convergence within the correspondence on compact time intervals:
\begin{equation} \label{eq:Fconverge} (M,(\widetilde{g}_{i,t})_{t\in (-(T-t_i)^{-1}T,0)},(\mu_t^i)_{t\in (-(T-t_i)^{-1}T,0)})\xrightarrow[i\to \infty]{\mathbb{F},\mathfrak{C}} (\mathcal{X},(\nu_t)_{t\in (-\infty,0)}).\end{equation}
Moreover, the metric flow pair $(\mathcal{X},(\nu_{t})_{t\in (-\infty,0)})$ is modeled on an orbifold K\"ahler-Ricci shrinker with isolated singularities in the sense of \cite[Theorem 2.8]{CHM}.
\end{Theorem}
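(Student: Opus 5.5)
This statement is essentially a recollection of known compactness results, so the proof is a matter of assembling them and checking their hypotheses in our setting. The plan has two parts: the $\mathbb{F}$-convergence \eqref{eq:Fconverge} is a direct application of Bamler's compactness theory, and the orbifold structure of the limit comes from \cite[Theorem 2.8]{CHM}.

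\textbf{Step 1: the sequence of flow pairs satisfies Bamler's hypotheses.} Each rescaled flow $(M,\widetilde g_{i,t})$ is defined on $(-(T-t_i)^{-1}T,0)$, an interval exhausting $(-\infty,0)$. The family $(\mu^i_t)$ is still a conjugate heat flow, because the conjugate heat equation is invariant under parabolic rescaling. The variance condition defining $M_T$ in Definition \ref{conjsingtime} gives $\operatorname{Var}_{\widetilde g_{i,t}}(\mu^i_t)\le H_4|t|$ after rescaling, using Bamler's monotonicity of variance for conjugate heat flows. A uniform lower bound on the Nash entropy at scale $|t|$ follows from compactness of $M$ and smoothness on $[0,T)$: Perelman's $\mu$-functional is monotone, so the entropy at any scale up to $T$ is bounded below by a constant depending only on $\widetilde g_0$.

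\textbf{Step 2: $\mathbb{F}$-convergence to a metric soliton.} With the uniform entropy bound and the $H_4$-concentration in hand, \cite[Theorem 2.37]{Bam3} (the tangent-flow version of the compactness theorem from Bamler's earlier work) yields, after passing to a subsequence, a correspondence $\mathfrak C$ and $\mathbb F$-convergence on compact time intervals to a limit $(\mathcal X,(\nu_t)_{t<0})$. The limit is a metric soliton because $\mathcal N_\mu(\tau)$ is monotone in $\tau$, hence converges as $\tau\to0$. The rescaled Nash entropies therefore become constant in the limit, and Bamler's rigidity for constant pointed Nash entropy then forces the self-similar structure.

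\textbf{Step 3: orbifold K\"ahler shrinker with isolated singularities.} Since $M$ is four-dimensional, Bamler's partial regularity gives a singular set of codimension four. The regular part carries a smooth Ricci shrinker, and it is K\"ahler because $\widetilde J$ is parallel and passes to the smooth limit on the regular set. It remains to upgrade the tangent cones at singular points to $\mathbb C^2/\Gamma$ with $\Gamma\subset U(2)$ acting freely on $\mathbb S^3$, so that the singularities are isolated orbifold points. This is exactly the content of \cite[Theorem 2.8]{CHM}, which uses the K\"ahler condition, the four-dimensionality, and the $\varepsilon$-regularity and curvature $L^2$ estimates available in real dimension four. I would quote that theorem, after verifying that its hypotheses (compact K\"ahler surface, finite-time singularity, base conjugate heat flow $\mu\in M_T$) are exactly ours.

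The main obstacle is this last orbifold-structure step. Its real content is the finiteness of curvature energy, $\int|\Rm|^2$ bounded by Chern–Weil on compact time slices, which rules out worse singularities. Since the paper takes this from \cite{CHM}, the proof here should amount to checking that its hypotheses are satisfied.
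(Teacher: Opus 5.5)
Your proposal matches the paper, which gives no separate argument and simply combines \cite[Theorem 2.37]{Bam3} with \cite[Theorem 2.8]{CHM}. The first gives subsequential $\mathbb{F}$-convergence to a metric soliton, and you verify its hypotheses correctly: variance monotonicity gives $H_4$-concentration, and Perelman's $\mu$-functional gives the entropy lower bound. The second supplies the orbifold K\"ahler shrinker structure with isolated singularities.

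One caution about your closing remark, which does not affect the argument since you only cite \cite{CHM}. The orbifold step cannot rest on a Chern--Weil bound for $\int_M|\Rm|^2\,dV_t$. On a K\"ahler surface this integral equals a topological constant plus $\int_M R^2\,dV_t$, and that term blows up in this collapsing setting: by Cauchy--Schwarz, $\int_M R\,dV_t$ stays bounded away from zero while the volume tends to zero. The paper instead points to Bamler's four-dimensional structure theory \cite{Bam3} for the orbifold structure, with \cite[Theorem 2.8]{CHM} packaging this with the K\"ahler structure.
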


The following is a consequence of \cite[Lemma 2.8]{CHL} and \cite[Lemma 2.11]{CHM}.
\begin{Lemma} \label{lem:Hncenters}
\begin{enumerate}
    \item \label{lem:Hncenters1} There exists a unique map $\widehat{\pi}:M_T \to Y$ and some $C \in (0,\infty)$ such that for any $\mu \in M_T$, $t\in [0,T)$, and any $H_{4}$-center $(x,t)$ of $\mu$, 
\begin{equation*} d_{g_Y}(\widetilde{\pi}(x),\widehat{\pi}(\mu))\leq C\sqrt{T-t}.\end{equation*}
    \item \label{lem:Hncenters2} If $(X,g)$ is a tangent flow of $(M,(\widetilde{g}_t)_{t\in [0,T)})$ at $\mu \in M_T$, and $\psi_i:X\supseteq U_i \to M$ are open embeddings realizing the convergence, then for any compact set $L\subseteq X_{\operatorname{reg}}$ and any neighborhood $\mathcal{V}$ of $\widetilde{\pi}^{-1}(\widehat{\pi}(\mu))$, we have $\psi_i(L)\subseteq \mathcal{V}$ for $i=i(L,\mathcal{V}) \in \mathbb{N}$ sufficiently large.
\end{enumerate}
\end{Lemma}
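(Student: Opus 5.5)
The lemma is stated as a consequence of \cite[Lemma 2.8]{CHL} and \cite[Lemma 2.11]{CHM}, so the plan is to import those arguments into the present setting rather than prove anything new.

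For part \eqref{lem:Hncenters1}, the key input is a Lipschitz-type estimate for $\widetilde{\pi}$ along the flow. By \eqref{eq:fibrationclass} and the parabolic Schwarz lemma of Song--Tian, we have $\widetilde{\pi}^{\ast}\omega_Y\leq C\widetilde{\omega}_t$ uniformly for $t\in[0,T)$. Hence $\widetilde{\pi}:(M,d_{\widetilde{g}_t})\to (Y,d_{g_Y})$ is $C$-Lipschitz, uniformly in $t$. Now fix $\mu\in M_T$. I would take two $H_4$-centers $(x,t)$ and $(x',t')$ of $\mu$ with $t\le t'$ and compare them at time $t$.
\begin{itemize}
\item First, $\mu_t$ concentrates near $x$ at scale $\sqrt{H_4(T-t)}$, by Chebyshev applied to the definition of an $H_4$-center.
\item Second, $\mu_{t'}$ concentrates near $x'$ at the smaller scale $\sqrt{H_4(T-t')}$.
\end{itemize}
To pass the second concentration back to time $t$, I would use that the $W_1$-distance between conjugate heat flows is nondecreasing backward in time, together with Bamler's $H_n$-center estimates (valid because Ricci flows on compact manifolds satisfy them). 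This gives $d_{W_1}^{\widetilde{g}_t}(\mu_t,\widetilde{\nu}_{x',t';t})\le C\sqrt{T-t}$, and so $d_{\widetilde{g}_t}(x,z)\le C\sqrt{T-t}$ for some $z$ near an $H_4$-center of $\widetilde{\nu}_{x',t';t}$.

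It remains to relate $\widetilde{\pi}(z)$ to $\widetilde{\pi}(x')$. For this I would use that $\widetilde{\pi}$ is nearly constant along conjugate heat kernels over time intervals of length comparable to $T-t$. Concretely, $\partial_t$ of $\widetilde{\pi}$ is trivial, but the base metric moves under the pullback, and one needs the bound
\[
\int d_{g_Y}(\widetilde{\pi}(x'),\widetilde{\pi}(y))\,d\widetilde{\nu}_{x',t';t}(y)\le C\sqrt{T-t}.
\]
I would prove this with the Lipschitz bound plus the variance bound $\int d^2_{\widetilde{g}_t}\,d\widetilde{\nu}\le H_4(t'-t)$ at the center. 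Combining the estimates, the points $\widetilde{\pi}(x_t)$ form a Cauchy family as $t\nearrow T$, with $d_{g_Y}(\widetilde{\pi}(x),\widetilde{\pi}(x'))\le C\sqrt{T-t}$. We then define $\widehat{\pi}(\mu)$ as the limit. Uniqueness is immediate, since the bound forces any candidate to equal this limit.

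For part \eqref{lem:Hncenters2}, take $y\in L$ and consider $\psi_i(y)$. This point lies at $\widetilde{g}_{t_i}$-distance at most $C(L)\sqrt{T-t_i}$ from an $H_4$-center of $\mu$ at time near $t_i$. The reason is that $\mathbb{F}$-convergence to the modeled shrinker sends $\nu_{-1}$ to $\mu^i_{-1}$ and identifies $L$ with a bounded region near the mass of $\nu_{-1}$. This is the step that needs \cite[Theorem 2.8]{CHM}: the convergence must be smooth on $X_{\operatorname{reg}}$ and compatible with the correspondence. The Lipschitz bound then gives $d_{g_Y}(\widetilde{\pi}(\psi_i(y)),\widehat{\pi}(\mu))\le C\sqrt{T-t_i}\to0$. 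Since $\widetilde{\pi}$ is proper, the preimages of small balls around $\widehat{\pi}(\mu)$ eventually lie in $\mathcal{V}$, which proves part \eqref{lem:Hncenters2}.

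The main obstacle is the uniform control of $\widetilde{\pi}$ along conjugate heat kernels across times, i.e.\ establishing that $\widehat{\pi}$ is well defined. I would handle it with the Schwarz lemma together with the monotonicity of $W_1$-distances, as sketched above.
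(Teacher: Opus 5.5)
The paper does not prove this lemma; it cites \cite[Lemma 2.8]{CHL} and \cite[Lemma 2.11]{CHM}. Your overall structure is sensible: the Schwarz lemma, then $\widehat{\pi}(\mu)$ as the limit of $\widetilde{\pi}$ applied to $H_4$-centers, then part (ii) from smooth convergence on $X_{\operatorname{reg}}$, the Lipschitz bound and properness of $\widetilde{\pi}$. Part (ii) does follow from part (i) roughly as you describe. There is, however, a gap in the step you single out as the main obstacle. You need
\[
\int_M d_{g_Y}\bigl(\widetilde{\pi}(x'),\widetilde{\pi}(y)\bigr)\,d\widetilde{\nu}_{x',t';t}(y)\leq C\sqrt{T-t}.
\]
The Lipschitz bound together with the variance bound at the $H_4$-center $z$ of $\widetilde{\nu}_{x',t';t}$ only bounds the left side by $C\,d_{\widetilde{g}_t}(x',z)+C\sqrt{H_4(t'-t)}$. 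The variance bound controls how $\widetilde{\nu}_{x',t';t}$ spreads about $z$. It says nothing about where the basepoint $x'$ of the later slice sits relative to $z$ in the earlier metric $\widetilde{g}_t$. A bound $d_{\widetilde{g}_t}(x',z)\leq C\sqrt{T-t}$ would require distance-distortion or Type I control, which is what the paper is ultimately trying to prove; backward in time the metric can expand a great deal where curvature is large. Monotonicity of $W_1$ cannot supply this either, since it compares two conjugate heat flows and $\delta_{x'}$ at time $t$ is not one. (Also, $t\mapsto d_{W_1}^{\widetilde{g}_t}(\mu_t,\nu_t)$ is nondecreasing in $t$, i.e.\ nonincreasing backward in time. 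You stated the direction the wrong way round, although the inequality you draw from it is the correct one.)

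The missing idea is to use the Schwarz lemma in trace form to control how the projection of a conjugate heat flow drifts in time. For $\phi\in C^{\infty}(Y)$, the function $u=\widetilde{\pi}^{\ast}\phi$ is time-independent. Since $Y$ is a curve, $\Delta_{\widetilde{g}_s}u$ equals, up to a constant factor, $(\Delta_{g_Y}\phi\circ\widetilde{\pi})\operatorname{tr}_{\widetilde{\omega}_s}\widetilde{\pi}^{\ast}\omega_Y$, so it is bounded by $C\|\phi\|_{C^2(Y)}$. Hence $\bigl|\frac{d}{ds}\int_M u\,d\nu_s\bigr|=\bigl|\int_M\square u\,d\nu_s\bigr|\leq C\|\phi\|_{C^2(Y)}$ for every conjugate heat flow $(\nu_s)$. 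Take $\phi(q)=|F(q)-F(\widetilde{\pi}(x'))|^2$ for an embedding $F:Y\hookrightarrow\mathbb{R}^N$. This gives $\int_M d_{g_Y}^2(\widetilde{\pi}(x'),\widetilde{\pi}(\cdot))\,d\widetilde{\nu}_{x',t';t}\leq C(t'-t)$, which is exactly the estimate you were missing. It is simpler still to apply this to $\mu$ directly. Then $\widetilde{\pi}_{\ast}\mu_s$ converges as $s\nearrow T$. The Lipschitz bound and the condition $\int_M\int_M d^2_{\widetilde{g}_s}\,d\mu_s\,d\mu_s\to 0$ force the limit to be a Dirac mass $\delta_{\widehat{\pi}(\mu)}$. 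The same computation with $\phi$ centered at $\widetilde{\pi}(x)$ then gives $d_{g_Y}(\widetilde{\pi}(x),\widehat{\pi}(\mu))\leq C\sqrt{T-t}$ for every $H_4$-center $(x,t)$, with no need to compare two centers. Alternatively, you can pair the Lipschitz bound with the estimate $\int_M\tau|\nabla f|^2\,d\nu\leq C$ for the potential of a conjugate heat kernel, which holds because $R\geq -C$.
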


Because $\liminf_{t \nearrow T} \operatorname{diam}_{\widetilde{g}_t}(M)>0$, we expect singularity models to be noncompact. The following will be sufficient for this purpose.

\begin{Lemma} \label{lem:noncompact} Suppose $Q_i \to \infty$, and either $\mu^i \in M_T$ or else $\mu^i=(\widetilde{\nu}_{x_i,t_i;t})_{t\in [0,t_i)}$ for some $(x_i,t_i)\in M\times (0,T)$ satisfying $t_i \nearrow T$. If $\mu^i \in M_T$, we write $t_i:=T$. Set
\begin{equation*}
    \widehat{g}_{i,t}:=Q_i\widetilde{g}_{t_i+Q_i^{-1}t}, \qquad \widehat{\mu}_t^i:= \mu_{t_i+Q_i^{-1}t}^i,
\end{equation*}
and suppose there is a metric soliton $(\mathcal{Z},(\nu_t)_{t\in (-\infty,0)})$, modeled on some orbifold K\"ahler-Ricci shrinker $(Z,g_Z)$, such that
\begin{equation*}
    (M,(\widehat{g}_{i,t})_{t\in [-Q_it_i,0)},(\widehat{\mu}_t^i)_{t\in [-Q_it_i,0)})\xrightarrow[i\to \infty]{\mathbb{F}} (\mathcal{Z},(\nu_t)_{t\in (-\infty,0)})
\end{equation*}
uniformly on compact time intervals. Then $Z$ is noncompact.
\end{Lemma}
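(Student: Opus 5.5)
We argue by contradiction: suppose $Z$ is compact, and use the volume behaviour \eqref{eq:volassumption} together with the diameter lower bound $\liminf_{t\nearrow T}\operatorname{diam}_{\widetilde{g}_t}(M)>0$.

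\emph{Step 1: compactness propagates.} If $Z$ is compact, then $Z$ is a compact orbifold shrinker with isolated singularities and the metric soliton $\mathcal{Z}$ has compact time slices. Bamler's $\mathbb{F}$-convergence then improves to convergence in the smooth Cheeger--Gromov sense on $Z_{\operatorname{reg}}$. More precisely, for each fixed $t\in[-2,-1]$, the time slices $(M,\widehat{g}_{i,t})$ should Gromov--Hausdorff converge to $(Z,d_{g_{Z,t}})$. This uses that the whole mass of $\widehat{\mu}^i_t$ concentrates in a bounded $P^\ast$-neighborhood. The main point is to show that no part of $M$ escapes. Since $Z$ has bounded diameter, the $\mathbb{F}$-limit has no room for mass far away. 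Moreover, points of $M$ at large $\widehat{g}_{i,t}$-distance from the $H_4$-center would carry positive $\widehat{g}$-volume.

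\emph{Step 2: scale-invariant volume estimate.} I expect the main obstacle to be controlling the part of $M$ outside the convergence region. I would handle it through volume rather than through distance. On the compact limit, $\operatorname{Vol}_{g_{Z,-1}}(Z)<\infty$. By smooth convergence on the regular part and the orbifold structure near the finitely many singular points, $\operatorname{Vol}_{\widehat{g}_{i,-1}}(\psi_i(Z'))$ is close to this for $Z'\Subset Z_{\operatorname{reg}}$ large.

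Next, use the $\mathbb{F}$-convergence (convergence of the conjugate heat kernels and the Nash entropy lower bound, i.e.\ Bamler's noncollapsing) to show that $\psi_i(Z')$ exhausts $M$. Suppose instead a point $y\in M\setminus\psi_i(Z')$ existed. Then the ball of radius $1$ about $y$ has volume bounded below by $\kappa>0$, by Perelman/Bamler noncollapsing under the uniform Nash entropy bound on the compact interval. The $W_1$-distance of $\widetilde{\nu}_{y,\cdot}$ from $\widehat{\mu}^i$ would then be unbounded, since $Z$ is bounded. Connectedness of $M$ along a path from $y$ back to the center would produce arbitrarily many disjoint unit balls, hence a limit point of $\mathcal{Z}$ at large distance. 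This contradicts compactness.

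\emph{Step 3: contradiction.} It follows that $Q_i\operatorname{Vol}_{\widetilde{g}_{t_i-Q_i^{-1}}}(M)^{1/2}$, namely $\operatorname{Vol}_{\widehat g_{i,-1}}(M)$ (volume scales by $Q_i^2$), stays bounded, and that $\operatorname{diam}_{\widehat g_{i,-1}}(M)$ is bounded. The diameter bound gives
\[
\operatorname{diam}_{\widetilde{g}_{t_i-Q_i^{-1}}}(M)\le CQ_i^{-1/2}\to0.
\]
This contradicts the diameter lower bound, since we are in the collapsing, not the extinction, case. Equivalently, it contradicts the volume asymptotics: \eqref{eq:volassumption} gives $\operatorname{Vol}_{\widetilde g_{t}}(M)\sim c(T-t)$ with $c>0$, so the rescaled volume $Q_i^2\operatorname{Vol}$ is of order $Q_i^2(T-t_i+Q_i^{-1})\to\infty$.

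The only delicate point in Step 3 is the case $\mu^i=\widetilde\nu_{x_i,t_i}$ with $Q_i(T-t_i)$ small. The volume argument then still applies, because $Q_i^2 Q_i^{-1}=Q_i\to\infty$.
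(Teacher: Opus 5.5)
Your Step 3 is fine and matches the paper's endpoint. Once $\operatorname{vol}_{\widehat g_{i,s}}(M)\le C$ holds for some fixed $s\in[-2,-1]$, undoing the scaling gives $\operatorname{vol}_{\widetilde g_{t_i+Q_i^{-1}s}}(M)\le CQ_i^{-2}\le C(T-t_i-Q_i^{-1}s)^2$, which contradicts \eqref{eq:volassumption}.

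\textbf{Where the argument has a gap: the global volume bound.} Steps 1--2 do not establish it. $\mathbb{F}$-convergence only tracks where the measures $\widehat\mu^i_t$ go. Compactness of $Z$ therefore says nothing about regions of $M$ that carry negligible $\widehat\mu^i$-mass but possibly large volume. Your claim that the time slices Gromov--Hausdorff converge to $Z$ is essentially the content of the lemma, and it is asserted rather than proved. The specific mechanisms you propose also fail:
\begin{itemize}
\item $\psi_i(Z')$ with $Z'\Subset Z_{\operatorname{reg}}$ cannot exhaust $M$. It misses the regions of $M$ corresponding to neighborhoods of the orbifold points, and you never control their volume.
\item The claim that a point $y\notin\psi_i(Z')$ has unbounded $W_1$-distance from $\widehat\mu^i$ is a non sequitur.
\item The bound $\operatorname{vol}(B(y,1))\ge\kappa$ at arbitrary points along a path needs a local (scalar) curvature bound on that ball. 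You have no such bound outside the region of smooth convergence; small necks are a counterexample. So the ``many disjoint unit balls'' count fails, and your diameter bound fails with it.
\end{itemize}

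\textbf{What the paper does instead.} Two ingredients are missing: (a) a correct way to place every point of $M$ in a bounded $P^{\ast}$-parabolic neighborhood, and (b) a tool that turns this into a volume bound without curvature assumptions.

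For (a), the paper first fixes $D$ with $\mathcal{Z}_{[-3,-1]}\subseteq P^{\ast}(x_0,-1;\tfrac{1}{10}D,-2,0)$, which is possible because $Z$ is compact. Suppose $M\times\{-2\}\not\subseteq P^{\ast}_{\widehat g_i}(x_i,-1;D,-2,0)$. Connectedness of $M$ and continuity of $y\mapsto d_{W_1}^{\widehat g_{i,-3}}(\widehat\nu^i_{y,-2;-3},\widehat\nu^i_{x_i,-1;-3})$ then produce $y_i$ at distance exactly $D$. Choosing distance exactly $D$, not just large, is what keeps the limit from escaping. The conjugate heat kernels based at $(y_i,-2)$ subconverge within the correspondence to a conjugate heat flow on $\mathcal{Z}$ whose variance vanishes as $t\nearrow -2$ \cite[Lemma 4.2]{FH}. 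Its $H_4$-center at time $-3$ lies at $W_1$-distance at least $D-\sqrt{H_4}\ge D/2$ from $\nu_{x_0,-1;-3}$. This contradicts the choice of $D$.

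For (b), the paper applies Bamler's volume bound for $P^{\ast}$-parabolic neighborhoods \cite[Theorem 9.8]{Bam1}, which gives $\operatorname{vol}_{\widehat g_{i,-2}}(M)\le C(D)$.

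Your sketch gestures at the connectedness idea. Without the exact-distance choice and the passage to a limiting conjugate heat flow, it yields no contradiction, and without (b) it yields no volume bound.
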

\begin{proof} Suppose by way of contradiction that $Z$ is compact. It suffices to find $D<\infty$ and $x_i' \in M$ such that 
\begin{equation} \label{eq:diametercontradiction} M \times \{-2\}\subseteq P_{\widehat{g}_i}^{\ast}(x_i',-1;D,-2,0),\end{equation} 
since then \cite[Theorem 9.8]{Bam1} would imply
\begin{align*}
    \operatorname{vol}_{\widetilde{g}_{t_i-2Q_i^{-1}}}(M)&= Q_i^{-2}\operatorname{vol}_{\widehat{g}_{i,-2}}(M) \\&= Q_i^{-2}\operatorname{vol}_{\widehat{g}_{i,-2}}(P_{\widehat{g}_i}^{\ast}(x_i',-1;D,-2,0) \cap (M\times \{-2\})) \\
    & \leq C(T-(t_i-2Q_i^{-1}))^2,
\end{align*}
contradicting assumption \eqref{eq:volassumption}.

Fix $x_0 \in Z \cong \mathcal{Z}_{-1}$, and choose $D \in (1,\infty)$ such that $D \geq 100\sqrt{H_4}$ and $\mathcal{Z}_{[-3,-1]} \subseteq  P^{\ast}(x_0,-1,\frac{1}{10}D;-2,0)$. Choose $x_i \in M$ such that $(x_i,-1)\xrightarrow[i\to \infty]{\mathfrak{C}} (x_0,-1)$. Assume by way of contradiction that \eqref{eq:diametercontradiction} fails, so that (since $M$ is connected) there exist $y_i \in M$ such that 
\begin{equation} \label{eq:noncompactcontra} d_{W_1}^{\widehat{g}_{i,-3}}(\widehat{\nu}_{y_i,-2;-3}^i,\widehat{\nu}_{x_i,-1;-3}^i) =D.\end{equation} 
After passing to a subsequence, \cite[Lemma 4.2]{FH} gives a conjugate heat flow $(\mu_t)_{t\in (-\infty,-2)}$ such that 
\begin{equation*}
    (\widehat{\nu}_{y_i,-2;t}^i)_{t\in [-10,-2)} \xrightarrow[i\to \infty]{\mathfrak{C}} (\mu_t)_{t\in [-10,-2)},
\end{equation*}
where $\lim_{t \nearrow -2}\operatorname{Var}(\mu_t)=0$. We may therefore pass \eqref{eq:noncompactcontra} to the limit to get 
\begin{equation*} d_{W_1}^{\mathcal{Z}_{-3}}(\mu_{-3},\nu_{x_0,-1;-3})=D.
\end{equation*}
Letting $(z,-3)$ be an $H_4$-center of $\mu$, it then follows that
\begin{equation*}
    d_{W_1}^{\mathcal{Z}_{-3}}(\delta_z,\nu_{x_0,-1;-3})\geq D-\sqrt{H_4} \geq \frac{1}{2}D.
\end{equation*}
On the other hand, we have 
\begin{align*}
    d_{W_1}^{\mathcal{Z}_{-3}}(\delta_z,\nu_{x_0,-1;-3})<\frac{1}{10}D
\end{align*}
by assumption, yielding a contradiction. 
\end{proof}

\section{Regular fibers}

\label{section:regular}

In this section, we give a new proof that the tangent flow near a regular fiber is the cylinder $\mathbb{C}\times\mathbb{P}^1$. This was shown in \cite{XuZhang,JScollapsing} using different methods.

\begin{Proposition} \label{prop:tangentflowregularfiber} If $\mu \in M_T$ satisfies $\widehat{\pi}(\mu)\in U$, then the unique tangent flow of $(M,\widetilde{J},(\widetilde{g}_t)_{t\in [0,T)})$ at $\mu$ is the shrinking cylinder $\mathbb{C}\times \mathbb{P}^1$. 
\end{Proposition}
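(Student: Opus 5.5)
We take an arbitrary sequence $t_i\nearrow T$ and apply Theorem \ref{bamconvergence} at $\mu$. After passing to a subsequence, this gives a tangent flow modeled on an orbifold K\"ahler-Ricci shrinker $(X,g,J,f)$ with isolated singularities. The goal is to show $X\cong \mathbb{C}\times\mathbb{P}^1$ with the cylindrical metric, for every such subsequence; uniqueness of the tangent flow then follows.

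The first step is to localize near the regular fiber. Set $y_0:=\widehat{\pi}(\mu)\in U$ and pick a small disc $D\ni y_0$ in $U$, so that $\widetilde{\pi}^{-1}(D)\cong D\times\mathbb{P}^1$ holomorphically. By Lemma \ref{lem:Hncenters}\eqref{lem:Hncenters2}, every compact $L\subseteq X_{\operatorname{reg}}$ is mapped by the embeddings $\psi_i$ into $\widetilde{\pi}^{-1}(D)$ for $i$ large. The maps $\psi_i$ are holomorphic in the limit, because the convergence is smooth on the regular part together with the complex structures. So on $X_{\operatorname{reg}}$ we obtain holomorphic maps $\widetilde{\pi}\circ\psi_i$ into $D$. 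Rescaled by the factor $(T-t_i)^{-1/2}$, these are bounded in $C^1$, since $|\partial\widetilde{\pi}|^2_{\widetilde g_t}\le C\,\mathrm{tr}_{\widetilde\omega_t}\widetilde\pi^*\omega_Y$ is bounded by a parabolic Schwarz lemma, which is standard for collapsing KRF. We use the base coordinate centered at $\widetilde\pi(x_i)$, where $x_i$ is an $H_4$-center; this is legitimate by Lemma \ref{lem:Hncenters}\eqref{lem:Hncenters1}. The rescaled maps therefore subconverge to a holomorphic map $\pi_\infty:X_{\operatorname{reg}}\to\mathbb{C}$, which extends across the isolated orbifold points by Hartogs. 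We expect $\pi_\infty$ to be nonconstant by Lemma \ref{lem:noncompact}. In the same way, the projection to the $\mathbb{P}^1$ factor gives holomorphic maps $X_{\operatorname{reg}}\to\mathbb{P}^1$. These are uniformly Lipschitz, since the fiberwise metric restricted to $\mathbb{P}^1$ has area $\sim (T-t)$, comparable to the rescaling. So they also subconverge, and we get $\Phi=(\pi_\infty,p_\infty):X\to\mathbb{C}\times\mathbb{P}^1$.

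The second step is to show that $\Phi$ is a biholomorphism. Since $\widetilde\pi^{-1}(D)$ is a smooth manifold and $\psi_i$ embeds neighborhoods of orbifold points into it, the orbifold points must have trivial isotropy. Indeed, the link of a small ball around a singular point would be $\mathbb{S}^3/\Gamma$ embedded as a separating hypersurface bounding a ball-like region in a smooth manifold. A cleaner route is to note that the images of small balls under the maps $\psi_i$ are smooth, so the convergence near those points has bounded curvature. Hence $X$ is smooth. Next, $\Phi$ is a local biholomorphism: the fibers $\{pt\}\times\mathbb{P}^1$ have area exactly $(T-t)\cdot c$ after rescaling, which converges to the area of a fiber in the limit, so no degeneration occurs. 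Injectivity comes from injectivity of $\psi_i$ composed with the product structure, and properness from noncompactness plus the $C^1$ bounds.

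Finally, on $\mathbb{C}\times\mathbb{P}^1$ the K\"ahler-Ricci shrinkers are classified. The $\mathbb{P}^1$ factor must carry $g_{FS}$ normalized so that $\Ric=g$, since the area is fixed by $c_1$, and the $\mathbb{C}$ factor carries the Gaussian soliton. One can invoke uniqueness of shrinkers with bounded curvature on products, or directly use that $\Ric(g)=\omega_{FS}$-pullback plus an $i\partial\bar\partial$ term and apply the known uniqueness of the cylinder among complete shrinkers on $\mathbb{C}\times\mathbb{P}^1$. The main obstacle will be the second step, particularly establishing nondegeneration of the base map (the $C^1$ lower bound) and ruling out orbifold points. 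We would first try to obtain a lower bound on $\mathrm{tr}_{\widetilde\omega_t}\widetilde\pi^*\omega_Y\cdot (T-t)^{-1}$ near regular fibers via the parabolic Schwarz lemma combined with the fiber area identity $\int_{\text{fiber}}\widetilde\omega_t=c(T-t)$.
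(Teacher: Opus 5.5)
There is a genuine gap at the step that carries the whole proposition, namely excluding orbifold points.

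The embeddings $\psi_i$ from \cite[Theorem 9.31]{Bam2} are only defined on an exhaustion of $X_{\operatorname{reg}}$. They are not defined on neighborhoods of points of $X\setminus X_{\operatorname{reg}}$. Smoothness of $\widetilde{\pi}^{-1}(D)$ gives no curvature control there: the flow may have curvature much larger than $(T-t_i)^{-1}$ in the region that collapses to an orbifold point. Asserting bounded curvature at scale $\sqrt{T-t_i}$ is a local Type I bound. That bound is not available at this stage, since the paper proves Type I in \S\ref{section:TypeI} using this very classification. So your ``cleaner route'' is circular.

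Your first route also fails. The hypersurface $\Sigma_{i,r}=\psi_i(\partial B_g(x_0,r))\cong \mathbb{S}^3/\Gamma$ does bound a compact region $\mathcal{B}_{i,r}$ in the smooth manifold, but nothing makes that region ``ball-like''. Links $\mathbb{S}^3/\Gamma$ have many smooth fillings (minimal resolutions, Milnor fibers, rational homology balls), and excluding them is exactly the hard part (compare \S\ref{subsection:Wahl}).

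The paper instead uses the specific topology of $\mathcal{V}\cong\mathbb{D}\times\mathbb{P}^1$:
\begin{itemize}
\item $\omega$ is nondegenerate on the contact distribution of the link and the intersection form of $\mathcal{V}$ vanishes, so $\mathcal{B}_{i,r}$ is a minimal symplectic filling.
\item $\mathcal{V}$ embeds in $\mathbb{S}^4$, so \cite{crisp} forces $\Gamma=Q_8$.
\item \cite{OO} then produces a $(-2)$-sphere in $\mathcal{B}_{i,r}\subseteq\mathcal{V}$, a contradiction.
\end{itemize}

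The construction of $\Phi=(\pi_\infty,p_\infty)$ and its biholomorphicity also rest on estimates you do not have.
\begin{itemize}
\item Fiber area $\sim (T-t)$ does not give a pointwise bound $\widetilde{\omega}_t\geq c(T-t)\,p^{\ast}\omega_{FS}$. So the rescaled $\mathbb{P}^1$-projections need not be uniformly Lipschitz.
\item Nonconstancy of $\pi_\infty$ does not follow from Lemma \ref{lem:noncompact}.
\item Local biholomorphicity, injectivity and properness would need fiber-diameter bounds at scale $\sqrt{T-t}$ and a lower bound on $d\pi_\infty$. This amounts to the main results of \cite{XuZhang,JScollapsing}, and you yourself leave it open.
\end{itemize}

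The paper needs none of this. After excluding the flat $\mathbb{C}^2$ by pseudolocality and orbifold points as above, the classification \cite{CCD,BCCD,LiWang} leaves only the FIK and BCCD shrinkers. Each of these contains a $(-1)$-curve. Since $X$ is smooth, this curve lies in a compact subset of $X_{\operatorname{reg}}$, and $\psi_i$ carries it to a class of self-intersection $-1$ in $\mathcal{V}$. That is impossible because the intersection form of $\mathcal{V}$ is zero.

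Your final appeal to the classification of shrinkers on $\mathbb{C}\times\mathbb{P}^1$ is fine, but your argument never legitimately reaches that point.
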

\begin{proof}
Suppose by way of contradiction that there exists a tangent flow $\mathcal{X}$ as in Theorem \ref{bamconvergence} at $\mu$ which is modeled on an orbifold K\"ahler-Ricci shrinker $(X,g)$ other than $\mathbb{C}\times \mathbb{P}^1$. By Lemma \ref{lem:noncompact}, $X$ is noncompact. If $X$ is the Gaussian shrinker on $\mathbb{C}^2$, then Perelman's pseudolocality gives $\lim_{t\nearrow T}\operatorname{vol}_{\widetilde{g}_t}(M)>0$, a contradiction. Thus $X$ is nontrivial. If $X$ has an orbifold singularity $x_0$, then there is a nontrivial subgroup $\Gamma \leq U(2)$ such that for all sufficiently small $r>0$, $\partial B_g(x_0,r)$ is diffeomorphic to $\mathbb{S}^3/\Gamma$, and such that if $H\subseteq T\partial B_g(x_0,r)\cong T(\mathbb{S}^3/\Gamma)$ is the image of the standard contact structure on $\mathbb{S}^3/\Gamma$, then $\omega$ restricts to a volume form on $H$ (see \cite[Proof of Proposition 5.4(i)]{CHM}).

By \cite[Theorem 9.31]{Bam2}, there exist open embeddings $\psi_i:X_{\operatorname{reg}}\supseteq U_i \to M$ realizing the convergence
\begin{equation*}
    (T-t_i)^{-1}\psi_i^{\ast}\widetilde{g}_{t_i} \to g, \qquad \psi_i^{\ast}\widetilde{J}\to J
\end{equation*}
in $C_{\operatorname{loc}}^{\infty}(X_{\operatorname{reg}})$. It follows that $\widetilde{\omega}_{t_i}|_{(\psi_i)_{\ast}H}$ is a volume form on the sub-bundle $(\psi_i)_{\ast}H$ of $T\Sigma_{i,r}$, where $\Sigma_{i,r}:=\psi_i(\partial B_g(x_0,r))$. Because $\widehat{\pi}(\mu)\in U$, \cite[Proposition 1-4-5]{matsuki} gives a neighborhood $\mathcal{V}$ of $\widetilde{\pi}^{-1}(\widehat{\pi}(\mu))$ biholomorphic to $\mathbb{D}\times \mathbb{P}^1$ such that $\Sigma_{i,r} \subseteq \mathcal{V}$ for all $i=i(r)\in \mathbb{N}$ sufficiently large. Because $\mathcal{V}$ is simply connected and has one end, it follows that $\mathcal{V}\setminus \Sigma_{i,r}$ has exactly two connected components, one of which is bounded. Call the bounded component $\mathcal{B}_{i,r}$. Because the intersection form of $H_2(\mathcal{V},\mathbb{Z})$ is zero, it follows that $\mathcal{B}_{i,r}$ is a minimal symplectic filling of the standard contact structure on $\mathbb{S}^3/\Gamma$. Because $\mathcal{V}$ is diffeomorphic to an open subset of $\mathbb{S}^4$, it follows from \cite{crisp} that $\Gamma$ is the Quaternion group $Q_8$. In particular, $\mathbb{C}^2/\Gamma$ is a simple singularity, so by \cite{OO}, $\mathcal{B}_{i,r}$ contains a 2-sphere of self-intersection $-2$, yielding a contradiction. 

We conclude that $X$ is smooth, so by \cite{CCD,BCCD,LiWang}, $X$ is biholomorphic to the BCCD shrinker or the FIK shrinker. In either case, $X$ has a $(-1)$-curve, which by the argument of \cite{CCD} implies that $\mathcal{V}$ contains a $(-1)$-curve as well, again yielding a contradiction. 
\end{proof}

\section{Singular fibers}
\label{section:singular}

Suppose $(X,J,g)$ is a tangent flow of $(M,\widetilde{J},(\widetilde{g}_t)_{t \in [0,T)})$ based at some $\mu \in M_T$ with $\widehat{\pi}(\mu)\in Y\setminus U$. Set $y_0:= \widehat{\pi}(\mu)$. Letting $t_i$ be as in \eqref{eq:Fconverge}, we fix a precompact open exhaustion $(U_i)$ of $X_{\operatorname{reg}}$ and open embeddings $\psi_i : U_i \to M$ as in \cite[Theorem 9.31]{Bam2}, so that $\widetilde{g}_{i,t}:=(T-t_i)^{-1}\widetilde{g}_{T+(T-t_i)t}$ satisfies
\begin{equation} \label{eq:smoothconvergence}
\psi_i^{\ast}\widetilde{g}_{i,t} \to g, \qquad \psi_i^{\ast}\widetilde{J} \to J \qquad \text{in }   C_{\operatorname{loc}}^{\infty}(X_{\operatorname{reg}}\times (-\infty,0))
\end{equation}
as $i\to \infty$. 

\subsection{Topology near a singular fiber}
\label{subsection:topology}

In this subsection, we show that a neighborhood of any singular fiber of $\widetilde{\pi}$ is biholomorphic to an open subset of the blowup $M_{\operatorname{BCCD}}$ of $\mathbb{C}\times \mathbb{P}^1$ at a point.

\begin{Lemma} \label{lem:singularfibertopology} There is a neighborhood $\mathcal{V}$ of $\widetilde{\pi}^{-1}(y_0)$ and a biholomorphism $\mathcal{V} \to \mathcal{V}_{\operatorname{BCCD}}$, where $\mathcal{V}_{\operatorname{BCCD}}$ is the preimage of the unit disk under the map $M_{\operatorname{BCCD}}\to \mathbb{C}$. 
\end{Lemma}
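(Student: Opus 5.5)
The plan is to first pin down the singular fiber numerically using the relative ampleness of $-K_M$, then contract one component and recognize the result as a trivial $\mathbb{P}^1$-bundle over a disk.

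\textbf{Step 1: relative ampleness and adjunction.} By \eqref{eq:fibrationclass}, $Tc_1(M)=[\widetilde{\omega}_0]-\widetilde{\pi}^{\ast}[\omega_Y]$. Hence for every irreducible curve $C$ contained in a fiber of $\widetilde{\pi}$ we have $-K_M\cdot C=T^{-1}[\widetilde{\omega}_0]\cdot C>0$. A generic fiber $F$ is a smooth $\mathbb{P}^1$ with $F^2=0$, so adjunction gives $K_M\cdot F=-2$.

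Write the singular fiber as $\widetilde{\pi}^{-1}(y_0)=\sum_{j} m_jC_j$, with $C_j$ irreducible and $m_j\geq 1$, so that $\sum_j m_j(-K_M\cdot C_j)=2$.

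\textbf{Step 2: the fiber is reducible.}
\begin{itemize}
\item If the fiber is irreducible and reduced, then adjunction with $F^2=0$ and $K_M\cdot F=-2$ gives $p_a=0$. So the fiber is a smooth rational curve of multiplicity one, and $\widetilde{\pi}$ is a submersion along it. This contradicts $y_0\notin U$.
\item If the fiber is $mC$ with $m\geq 2$, then $C^2=0$ and $-K_M\cdot C=2/m$ forces $m=2$ and $K_M\cdot C=-1$. Adjunction then reads $2p_a(C)-2=-1$, which is impossible by parity.
\end{itemize}
So the fiber is reducible, and by Zariski's lemma every component satisfies $C_j^2<0$.

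\textbf{Step 3: structure of the components.} Adjunction gives $K_M\cdot C_j+C_j^2=2p_a(C_j)-2\geq -2$. Since both terms on the left are negative integers, $K_M\cdot C_j=C_j^2=-1$ and $p_a(C_j)=0$, so each $C_j$ is a $(-1)$-curve. The identity $\sum_j m_j=2$ then shows there are exactly two components $C_1,C_2$, each of multiplicity one. Finally, $0=F\cdot C_1=-1+C_1\cdot C_2$ gives $C_1\cdot C_2=1$, so they meet transversally in a single point.

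\textbf{Step 4: blowing down.} Choose a coordinate disk $\Delta\ni y_0$ in $Y$ containing no other critical values, and identify it with the unit disk. Blow down $C_1$ via Castelnuovo's criterion inside $\widetilde{\pi}^{-1}(\Delta)$. The result is a complex surface $W$ with a proper holomorphic map $W\to\Delta$ whose fibers are all smooth rational curves: the image of $C_2$ now has self-intersection $0$ and multiplicity one. So $W\to\Delta$ is a proper holomorphic submersion with fibers $\mathbb{P}^1$.

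By Fischer--Grauert it is a holomorphic $\mathbb{P}^1$-bundle. Over the Stein, contractible disk it is the projectivization of a rank-two vector bundle, which is holomorphically trivial by Grauert's Oka principle. Hence $W\cong\Delta\times\mathbb{P}^1$, and $\widetilde{\pi}^{-1}(\Delta)$ is the blowup of $\Delta\times\mathbb{P}^1$ at a point $p=(0,q)$.

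\textbf{Step 5: matching with $M_{\operatorname{BCCD}}$.} Composing with an automorphism of $\mathbb{P}^1$ moving $q$ to the point blown up in $M_{\operatorname{BCCD}}$ gives the required biholomorphism onto $\mathcal{V}_{\operatorname{BCCD}}$. This uses that $M_{\operatorname{BCCD}}$ is the blowup of $\mathbb{C}\times\mathbb{P}^1$ at a point of $\{0\}\times\mathbb{P}^1$, and that the biholomorphism commutes with the projections to the disk.

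\textbf{Main obstacle.} The step I expect to need the most care is Step 4, namely trivializing the blown-down family over the disk. I would rely on Fischer--Grauert local triviality together with the fact that holomorphic $\mathrm{PGL}_2$-bundles over a disk are trivial. If needed, I would instead use the section provided by the image of $C_1$, or the relatively ample bundle $-K_W$, to realize $W$ as $\mathbb{P}(\mathcal{E})$ with $\mathcal{E}=(\widetilde{\pi}_{\ast}(-K_W))^{\ast}$-type data, which is trivial on $\Delta$.
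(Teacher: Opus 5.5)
Your proposal is correct and follows essentially the same route as the paper. Relative ampleness of $-K_M$ combined with adjunction forces the singular fiber to consist of two reduced $(-1)$-curves meeting once, and blowing one of them down produces a smooth $\mathbb{P}^1$-fibration over a disk, hence $\Delta\times\mathbb{P}^1$; the only differences are that you split off the non-reduced irreducible case with a parity argument and invoke Fischer--Grauert plus the Oka principle where the paper cites Matsuki's Proposition 1-4-5.
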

\begin{proof}
Let 
\begin{equation*} \widetilde{\pi}^{-1}(y_0)=F=\sum_{i=1}^m a_i E_i \end{equation*}
be the decomposition into prime divisors $E_i$, where $a_i$ are positive integers. Because $-K_M$ is $\widetilde{\pi}$-ample, 
\begin{equation*}
    K_M \cdot E_i <0, \qquad F\cdot E_i=0 \qquad \text{ for } 1\leq i\leq m,
\end{equation*}
so we can compute
\begin{equation} \label{eq:basicintersection}
    0=F\cdot E_i =\sum_{j=1}^m a_jE_j \cdot E_i \geq a_i E_i \cdot E_i.
\end{equation}
Letting $F'$ be a regular fiber of $\widetilde{\pi}$, the adjunction formula gives 
\begin{equation} \label{eq:adjunctionconsequence}
   -2= F'\cdot K_M = \sum_{i=1}^m a_i E_i \cdot K_M
\end{equation}
since $F,F'$ are numerically equivalent. Because $a_i E_i \cdot K_M \leq -1$ for each $i\in \{1,...,m\}$, it follows that $m\leq 2$. If $m=1$, then $E_1 \cdot E_1=0$ and adjunction imply
\begin{equation*}
    2p_a(E_1)-2=E_1 \cdot K_M,
\end{equation*}
where the arithmetic genus $p_a(E_1)$ is a nonnegative integer. It follows that $p_a(E_1)=0$ and $E_1\cdot K_M=-2$, so that \eqref{eq:adjunctionconsequence} implies $a_1=1$. By \cite[Proposition 1-4-5]{matsuki}, $y_0$ is a regular value of $\widetilde{\pi}$, contradicting $y_0 \in Y\setminus U$. Thus $m=2$, and \eqref{eq:adjunctionconsequence} yields $a_1=a_2=1$ and $E_1 \cdot K_M = E_2 \cdot K_M=-1$. By the adjunction formula,
\begin{equation*}
    2p_a(E_i)-2=-1+E_i \cdot E_i,
\end{equation*}
hence $p_a(E_i)=0$ and $E_i \cdot E_i=-1$. In other words, both $E_i$ are $(-1)$-curves. Finally, \eqref{eq:basicintersection} gives $E_1 \cdot E_2=1$.

Choose a holomorphic disk $\mathbb{D}_{y_0} \subseteq Y$ around $y_0$ with $U\cap \mathbb{D}_{y_0}=\mathbb{D}_{y_0}\setminus \{y_0\}$, and set $\mathcal{V}:=\widetilde{\pi}^{-1}(\mathbb{D}_{y_0})$. Let $\rho:\mathcal{V}\to \mathcal{W}$ be the blowdown along $E_1$, so that $\rho(E_2) \cong \mathbb{P}^1$ has self-intersection $0$, and $\widetilde{\pi}$ factors as $\widetilde{\pi}=\widehat{\pi}\circ \rho$ for some holomorphic map $\widehat{\pi}:\mathcal{W}\to \mathbb{D}_{y_0}$. By \cite[Proposition 1-4-5]{matsuki}, after possibly shrinking $\mathbb{D}_{y_0}$, $\widehat{\pi}|_{\widehat{\pi}^{-1}(\mathbb{D}_{y_0})}$ is biholomorphic to the projection map $\mathbb{D}_{y_0}\times \mathbb{P}^1 \to \mathbb{D}_{y_0}$, hence the claim follows. 
\end{proof}

\subsection{Fano fibration structure}
\label{subsection:fanofibration}

Because $X$ is non-compact, it is not a Fano variety in the usual sense. However, it is known \cite{HZ} that $X$ admits a fibration over an affine variety, whose fibers are Fano. In our situation, a much more elementary proof of this fact is possible, which we now give.

\begin{Proposition} $X$ admits the structure of a polarized Fano fibration $\pi:X\to \mathcal{C}$ for some normal affine algebraic variety $\mathcal{C}$.
\end{Proposition}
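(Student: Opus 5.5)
We follow the strategy of \cite{SunZhang}, adapted to orbifold shrinkers with isolated singularities. Let $(X,J,g,f)$ be the orbifold K\"ahler-Ricci shrinker modeling the tangent flow. By Lemma \ref{lem:noncompact}, $X$ is noncompact. The soliton vector field $J\nabla f$ generates a holomorphic isometric torus action, and $\nabla f$ is a complete holomorphic vector field. Consider the ring $R(X)$ of holomorphic functions on $X$ with polynomial growth, or equivalently, by the weight decomposition, finite sums of eigenfunctions of $\mathcal{L}_{-J\nabla f}$ with bounded weight.

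The first step is to generate enough such functions. We apply H\"ormander $L^2$-estimates with weight $e^{-f}$ on $X_{\operatorname{reg}}$, which extend across the isolated orbifold points since there are no quasi-reflections (Remark \ref{rem:noquasireflections}). Since $\Ric+\nabla^2 f=g>0$, these estimates produce holomorphic sections of $K_X^{-k}$ and holomorphic functions. From this we conclude that $R(X)$ is finitely generated and separates points at infinity, where $f$ is large. The natural candidate is $\mathcal{C}:=\operatorname{Spec}R(X)$, with $\pi:X\to\mathcal{C}$ the evaluation map. Then $\mathcal{C}$ is affine, and it is normal because $X$ is normal. Moreover, $\pi$ is proper: a function such as $\sum|z_j|^2$ built from the generators should be comparable to $e^{cf}$ on the set where holomorphic functions are constant-free. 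More elementarily, in our surface setting we can use that $f$ is proper and that the fibres of $\pi$ are compact analytic subsets on which $f$ is bounded.

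Next we show that $-K_X$ is $\pi$-ample. We use the $L^2$-weighted sections of $K_X^{-k}$ for $k$ large to embed each fibre. Positivity follows from $\Ric_g+\nabla^2f=g$: the form $\omega$ represents $c_1(-K_X)$ up to $\partial\bar\partial$ of $f$, so $-K_X$ is positive on every compact curve in $X$. It is therefore $\pi$-ample, and the fibres are Fano. Finite generation of the relative anticanonical ring $\bigoplus_k H^0(X,-kK_X)^{\text{poly}}$ over $R(X)$ then gives the polarized Fano fibration structure $X=\operatorname{Proj}_{\mathcal{C}}$.

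The simplification special to our situation is that, by Lemma \ref{lem:Hncenters}\eqref{lem:Hncenters2} and Lemma \ref{lem:singularfibertopology}, compact subsets of $X_{\operatorname{reg}}$ map into $\mathcal{V}\cong\mathcal{V}_{\operatorname{BCCD}}$. Pulling back the coordinate function $w$ on $\mathbb{D}$, suitably rescaled by $(T-t_i)^{-1/2}$ and normalized, gives in the limit a nonconstant holomorphic function on $X$ with controlled growth. This furnishes at least one nontrivial element of $R(X)$ directly and shows $\dim\mathcal{C}\geq1$.

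The main obstacle is properness of $\pi$ together with finite generation, that is, controlling growth of holomorphic functions uniformly. Our first approach is the Sun--Zhang argument: decompose into torus weights, bound dimensions of each weight space via the drift Laplacian spectrum of $\Delta_f$ (which has discrete spectrum since $X$ is a shrinker with $f$ proper), and deduce that the weight cone is finitely generated.
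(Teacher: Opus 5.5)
Your proposal treats the passage from the smooth setting of \cite{SunZhang} to orbifolds as a purely local issue, namely extending $L^2$ estimates across isolated points without quasi-reflections. The real obstruction is global, and you never address it. Theorem \ref{bamconvergence} only gives a \emph{discrete} orbifold locus, so a priori $X$ could have infinitely many orbifold points escaping to infinity. This has to be ruled out, for two reasons. First, a polarized Fano fibration is a normal variety of finite type over $\mathcal{C}$, so it has only finitely many singular points. Second, without finiteness there need not be any uniform $k$ for which $K_X^{-k}$ is locally free, so your embedding by weighted $L^2$ sections of $K_X^{-k}$ and the relative $\operatorname{Proj}$ are not available as stated. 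Your claims that $R(X)$ is finitely generated and that $\pi$ is proper pass over exactly this point. The properness claim is also circular as written, since it presupposes that the fibres are compact.

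Proving this finiteness is the actual content of the paper's proof. Afterwards it simply quotes the construction of \cite[pp.~7--8]{SunZhang}, which is essentially the route you sketch. The missing idea uses the torus action rather than $L^2$ theory:

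By an orbifold version of \cite{HNP}, the moment map of the torus generated by $J\nabla f$ is proper and open onto a closed convex image. Perturbing as in \cite[Lemma 3.3]{SunZhang} gives an effective $\mathbb{S}^1$-action with proper, bounded-below Hamiltonian $u$ satisfying $\Delta_g u-\mathcal{L}_{\frac12\nabla f}u=-u$.

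Orbifold points are fixed by the action, hence are critical points of $u$. Openness rules out local maxima and non-global minima, so critical points are isolated outside a compact set.

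At such a critical point $x_j$ with $u(x_j)=z_j$, the equation forces the weights to satisfy $p_j-q_j=-z_j$. Hence nearby points on the two weight axes, lying on opposite sides of the level $z_j$, have $\mathbb{S}^1$-isotropy orders differing by at least $|\Gamma_j|\,|z_j|$.

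Gradient lines of $u$ preserve isotropy. So if $z_j\to\infty$, broken backward gradient flows produce points in a fixed compact set with unbounded isotropy, a contradiction.

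Separately, your nonconstant function built from $w\circ\widetilde{\pi}$ is not needed here; the paper gets $\dim\mathcal{C}\geq 1$ later from Lemma \ref{lem:noncompact}. As written, its nonconstancy in the limit would also require a lower gradient bound that you have not justified.
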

\begin{proof}
    By \cite{Bam3}, we know that $(X,J,g)$ is a smooth orbifold metric, possibly with infinitely many orbifold points escaping to infinity. Let $\mathbb{T}$ be the real torus generated by $J\nabla f$ in the isometry group of $X$. Generalizing the results in \cite[Theorem~4.1]{HNP} to the orbifold setting, we conclude that the moment map 
\[
\mu : X \to \mathrm{Lie}(\mathbb{T})^*
\]
is proper and open onto its image, and its image is a closed convex set. Then as in \cite[Lemma 3.3]{SunZhang}, we can perturb the vector field $J\nabla f$ to get a holomorphic Killing vector field generating an $\mathbb{S}^1$-action, which moreover admits a proper and bounded below Hamiltonian potential $u \in C^{\infty}(X)$ normalized so that
\begin{equation} \label{eq:laplacian}
    \Delta_g u - \mathcal{L}_{\frac{1}{2}\nabla^g f}u=-u.
\end{equation}

Because $\mu$ is open onto its image, the same is true for $u$. In particular, $u$ admits no local maxima, and every local minimum is necessarily a global minimum. Therefore, outside of a compact set, all critical points of $u$ must be isolated. 

Suppose by way of contradiction that there are infinitely many isolated critical points $x_j$, so that
\begin{equation}
    z_j:=u(x_j)\rightarrow \infty \text{ as }j\rightarrow \infty.
\end{equation}
At a critical point $x_j$, \eqref{eq:laplacian} implies the relation
\begin{equation}
    p_j - q_j = -z_j,
\end{equation}
where $p_j$ and $q_j$ denote the weights of the induced $S^1$-action on the tangent space $T^{1,0}_{x_j}X$. If $x_j$ is an orbifold point, this relation is understood in a local uniformizing chart. Because $J\nabla u$ generates an $\mathbb{S}^1$-action by biholomorphisms, near each critical point $x_j$ of $u$, we can find local holomorphic coordinates $(w_1, w_2)$ centered at $x_j$ such that the $S^1$-action is given by 
\begin{equation}
    e^{\ii \theta}(w_1,w_2)=(e^{\ii p_j\theta}w_1,e^{-\ii q_j\theta}w_2)
\end{equation}
and the Hamiltonian potential $u$ has the form 
\begin{equation}
    u=z_j+p_j|w_1|^2-q_j|w_2|^2+O(|w|^3).
\end{equation}
Because the $\mathbb{S}^1$-action is effective, we know $\gcd(p_j,q_j)=1$. Again, if $x_j$ is an orbifold point, the above relation holds on a local uniformizing chart. Let $\Gamma_j$ denote the local uniformizing group near the orbifold point $x_j$. Since $\Gamma_j$ commutes with the $S^1$-action and $\gcd(p_j,q_j)=1$, it follows that $\Gamma_j$ is generated by $\mathrm{diag}(\zeta_1,\zeta_2)$, where $\zeta_1$ and $\zeta_2$ are primitive $|\Gamma_j|$-th roots of unity. 

In the following, let $\Gamma_p$ denote the stabilizer of a point $p\in X$ under the $S^1$-action.
Then by considering points near $x_j$ with local coordinates $(\epsilon,0)$ and $(0,\epsilon)$, we can find points $x_j',y_j'$ in a neighborhood of $x_j$ such that 
\begin{equation} \label{eq:Fanofibration1}
    u(x_j')<z_j<u(y_j')
\end{equation}
and for $k_j=|\Gamma_j|\geq 1$,
\begin{equation} \label{eq:Fanofibration2}
    |\Gamma_{x_j'}|\geq |\Gamma_{y_j'}|+k_j|z_j|.
\end{equation}
By considering the broken backwards gradient flows of $u$ starting at $x_j$, and using \eqref{eq:Fanofibration1},\eqref{eq:Fanofibration2} along with the fact that gradient flows of $u$ preserve the $\mathbb{S}^1$-isotropy groups, we obtain a sequence $\widetilde{x}_j$ in a fixed compact set such that
\begin{equation}
    |\Gamma_{\tilde x_j}|\rightarrow \infty,
\end{equation}
yielding a contradiction. Thus the set of critical points of $u$ is compact. Because any orbifold point of $X$ is a critical point of $u$, this means $X$ has at most finitely many orbifold points. 

From here, one may construct a polarized Fano fibration $\pi:X\to \mathcal{C}$ exactly as in \cite[pp.~7--8]{SunZhang}, where $\mathcal{C}$ is a normal affine variety.
\end{proof}

To identify $\mathcal{C}$, we first require a more detailed description of the asymptotic geometry of $X$, which is provided by the following.

\begin{Proposition} \label{prop:curvaturenottozero}
\begin{enumerate}[(i)]
    \item \label{prop:curvaturenottozero0} $X$ has bounded curvature.

    \item \label{prop:curvaturenottozero1} There is a sequence $x_i \in X$ diverging to infinity such that $\liminf_{i \to \infty} |\operatorname{Rm}_g|_g(x_i)>0$.

    \item \label{prop:curvaturenottozero2} For any compact subset $A \subseteq X$, there is a compact holomorphic curve $D \subseteq  X\setminus A$ of trivial self-intersection which is isomorphic to $\mathbb{P}^1$. 
\end{enumerate}

\end{Proposition}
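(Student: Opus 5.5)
We first prove (iii), then (ii), and finally (i).

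For (iii), we use the local description near the singular fiber. Write $y_0=\widehat{\pi}(\mu)$, and let $\mathcal{V}\cong \mathcal{V}_{\operatorname{BCCD}}$ be the neighborhood of $\widetilde{\pi}^{-1}(y_0)$ from Lemma \ref{lem:singularfibertopology}. Fix a compact set $A\subseteq X$, and choose a point $x\in X_{\operatorname{reg}}$ at distance at least $R\gg 1$ from $A$. By Lemma \ref{lem:Hncenters}\eqref{lem:Hncenters2}, $\psi_i(x)$ lies in $\mathcal{V}$ for $i$ large. We will use that the regular fibers of $\widetilde{\pi}$ near $y_0$ have $\widetilde{g}_{t_i}$-area comparable to $T-t_i$, since their area is $[\widetilde{\omega}_{t_i}]\cdot F'=(T-t_i)c_1(M)\cdot F'=2(T-t_i)$. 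After rescaling, these fibers are holomorphic $\mathbb{P}^1$'s of area $2$ with self-intersection $0$. Let $F_i$ be the fiber of $\widetilde{\pi}$ through $\psi_i(x)$; if it is singular, use a nearby regular fiber. Rescaled, $F_i$ has area $2$. We then pull back $\psi_i^{-1}(F_i)$ and use Bishop-type compactness (uniform area bound, $J_i\to J$) to extract a limit holomorphic curve $D$ through $x$. We must show that $D$ is compact and avoids $A$. The key estimate is a diameter bound: a holomorphic curve of bounded area lying in a region of bounded geometry has bounded diameter, by monotonicity. This gives $\operatorname{diam}(D)\le C$ near $x$, provided the region around $x$ has bounded geometry at scale one, hence $D\cap A=\emptyset$ once $R\gg C$. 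Provisionally we get bounded geometry from the orbifold singularities being finite (shown just above) together with the monotonicity formula with uniform constants on $X_{\operatorname{reg}}$ away from them. To avoid circularity with (i), we instead run monotonicity on the approximating smooth flows, where the curvature scale at $\psi_i(x)$ is bounded below uniformly because of the smooth convergence \eqref{eq:smoothconvergence}. Finally, the limit of the $\psi_i^{-1}(F_i)$ is a cycle of area $2$ homologous to a limit of self-intersection-$0$ classes. Its components have negative canonical degree ($-K_X$ is relatively ample for $\pi$), so adjunction, as in Lemma \ref{lem:singularfibertopology}, forces it to be a single smooth $\mathbb{P}^1$ of self-intersection $0$, since it avoids the $(-1)$-curves that would show up as a reducible limit. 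We expect the diameter control of the limit curve to be the main obstacle.

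Next, (ii) follows from (iii). Take curves $D_j$ diverging to infinity. By Gauss--Bonnet, together with the Gauss equation for holomorphic curves in Kähler manifolds (sectional curvature of the complex curve is at most the ambient holomorphic sectional curvature), $4\pi=\int_{D_j}K_{D_j}\le \int_{D_j}|\operatorname{Rm}_g|$. The area of $D_j$ is $[\omega]\cdot D_j$, and on a shrinker $\omega$ is cohomologous to $\rho=\operatorname{Ric}$ on the relevant classes, so the area equals $2\pi c_1\cdot D_j=4\pi$, which is bounded. Hence $\sup_{D_j}|\operatorname{Rm}_g|\ge 1$, and picking $x_j\in D_j$ achieving this supremum gives (ii).

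For (i), we argue by contradiction. Suppose $|\operatorname{Rm}|(z_k)\to\infty$ with $z_k\to\infty$. Rescale at the curvature scale and use the shrinker equation: far out, $\nabla f$ is large and the flow is essentially static along $\nabla f$. As in \cite{CHM}, we obtain an ancient Kähler flow limit which, by Lemma \ref{lem:noncompact}-type volume arguments and the collapsing assumption \eqref{eq:volassumption}, must be a Kähler surface that splits a line. A Ricci-flat or shrinking factor of complex dimension one with bounded normalized curvature then contradicts $|\operatorname{Rm}|\to\infty$. Alternatively, we invoke Bamler's result that tangent flows of $4$-dimensional flows have bounded curvature away from finitely many orbifold points, which together with finiteness of the orbifold points gives (i) directly.
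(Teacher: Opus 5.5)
The step that carries your whole argument, keeping the pulled-back fibers $\psi_i^{-1}(F_i)$ inside a fixed compact region near $x$, is not established. With your ordering (iii)$\Rightarrow$(ii)$\Rightarrow$(i) it is circular. Monotonicity keeps a connected holomorphic curve of area $2$ through $\psi_i(x)$ inside $\psi_i(B_g(x,R))$ only when the geometry is bounded at some scale $r$ on the \emph{whole} ball with $rR\gtrsim 1$, since a curve crossing the annulus picks up area $\gtrsim rR$. Your proposed substitutes do not supply this:
\begin{itemize}
\item A lower bound on the curvature scale at $\psi_i(x)$, which is all \eqref{eq:smoothconvergence} gives, says nothing about the region the curve traverses.
\item Controlling the geometry along all of $F_i$ on the flow side is essentially the Type I bound, i.e.\ the main theorem. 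This is exactly how the paper bounds the diameter of a $(-1)$-curve in Section~\ref{section:TypeI}, and only after Type I is known.
\item Finiteness of the orbifold points says nothing about curvature at infinity.
\end{itemize}
What you actually need is (i), but you prove it last and neither of your arguments works. A blow-up limit normalized to $|\Rm|=1$ at the basepoint that splits off a flat factor (e.g.\ a shrinking $\mathbb{C}\times\mathbb{P}^1$) is perfectly consistent with $|\Rm|(z_k)\to\infty$. Bamler's theory gives no curvature bound at infinity; it does not even give finitely many orbifold points, which is why the paper proves finiteness separately. The paper instead cites \cite[Theorem 4.2]{CHM} for (i). It proves (ii) directly by pseudolocality: quadratic curvature decay would produce far-out annuli in $X$ whose images in $M$ keep curvature $\lesssim (T-t_i)^{-1}$ up to time $T$, hence keep volume bounded below, contradicting \eqref{eq:volassumption}. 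It then gets (iii) from (i), (ii) and the asymptotic cylinder of \cite{CCD}.

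Even granting the diameter bound, your identification of the limit cycle is unjustified. A limit of area-$2$, self-intersection-$0$ fibers can be a reducible cycle $D_1+D_2$ of two $(-1)$-curves meeting once, which is precisely the configuration in Lemma~\ref{lem:singularfibertopology}. Adjunction does not exclude it, and nothing in your argument shows the limit ``avoids the $(-1)$-curves''.

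Your route becomes viable if you take (i) from \cite{CHM} first. Bounded geometry of $g$ on $B_g(x,R')$ with $R'\gg 1$ then traps $F_i$ in $\psi_i(B_g(x,R'/2))$. The resulting compact curves arbitrarily far out already rule out $\dim_{\mathbb{C}}\mathcal{C}=2$, and then $\mathcal{C}\cong\mathbb{C}$ with its torus action lets you pick a smooth fiber far out.

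Your Gauss--Bonnet deduction of (ii) from (iii) is correct. The area $\int_D\omega=\int_D\rho$ is a fixed multiple of $c_1(X)\cdot D=2$, so $\sup_D|\Rm|$ is bounded below. This is a nice, more intrinsic alternative to the paper's volume argument for (ii).
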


\begin{proof}
\eqref{prop:curvaturenottozero0} This follows from \cite[Theorem 4.2]{CHM}.

\eqref{prop:curvaturenottozero1}
Suppose by way of contradiction that such a sequence does not exist. In this case, \cite[Proof of Proposition 5.4(ii)]{CHM} gives $C<\infty$ such that 
\[
|\operatorname{Rm}_g|_{g}(x)\leq\frac{C}{d_g^{2}(x_{\infty},x)+1}
\]
for all $x\in X$, where $x_{\infty}\in X$ is fixed. In particular, for $\Lambda>0$ large and to be
determined, there exists $D=D(\Lambda)>0$ such that $X\setminus X_{\operatorname{reg}} \subseteq B_g(x_{\infty},\frac{D}{2})$ and
\[
\sup_{B_{g}(x,2\Lambda)}|\operatorname{Rm}_g|_{g}\leq\frac{1}{(2\Lambda)^{2}}
\]
for all $x\in X\setminus B_{g}(x_{\infty},D)$. Because $X$ is $\kappa$-noncollapsed
at all scales for some $\kappa>0$, this implies 
\[
\frac{\text{vol}_{g}(B_{g}(x,\frac{1}{2}\Lambda))}{\Lambda^{4}}\geq\kappa
\]
for all $x\in X\setminus B_{g}(x_{\infty},D)$. Thus, for any $x\in\mathcal{A}:=B_{g}(x_{\infty},3D)\setminus\overline{B}_{g}(x_{\infty},2D)$, it follows that $\widetilde{g}_i:=(T-t_i)^{-1}\widetilde{g}_{t_i}$ satisfies 
\[
\sup_{B_{\widetilde{g}_{i}}(\psi_{i}(x),\Lambda)}|\operatorname{Rm}_{\widetilde{g}_{i}}|_{\widetilde{g}_{i}}\leq\frac{1}{\Lambda^{2}},\,\,\,\,\,\,\,\,\,\,\,\,\,\,\frac{\text{vol}_{\widetilde{g}_{i}}(B_{\widetilde{g}_{i}}(\psi_{i}(x),\Lambda))}{\Lambda^{4}}\geq\kappa
\]
for $i=i(\Lambda)\in \mathbb{N}$ sufficiently large. Equivalently,
\[
\sup_{B_{\widetilde{g}_{t_{i}}}(\psi_{i}(x),\Lambda\sqrt{T-t_{i}})}|\operatorname{Rm}_{\widetilde{g}_{t_{i}}}|_{\widetilde{g}_{t_{i}}}\leq\frac{1}{\Lambda^{2}(T-t_i)},\,\,\,\,\,\,\,\,\,\,\,\,\,\,\frac{\text{vol}_{\widetilde{g}_{t_{i}}}(B_{\widetilde{g}_{t_{i}}}(\psi_{i}(x),\Lambda\sqrt{T-t_{i}}))}{(\Lambda\sqrt{T-t_{i}})^{4}}\geq\kappa
\]
whenever $i=i(\Lambda)\in \mathbb{N}$ is large. We can therefore apply \cite[Theorem 1.2]{LuPseudo}
at scale $r_{0}=\Lambda\sqrt{T-t_{i}}$ to obtain $\epsilon_{0}=\epsilon_{0}(\kappa)>0$
such that 
\[
|\operatorname{Rm}_{\widetilde{g}_{t}}|_{\widetilde{g}_{t}}(\psi_{i}(x),t)\leq\frac{1}{\epsilon_{0}^{2}\Lambda^{2}(T-t_{i})}
\]
for all $x\in\mathcal{A}$ and $t\in[t_{i},\min\{t_{i}+\epsilon_{0}^{2}\Lambda^{2}(T-t_{i}),T\})$.
If we choose $\Lambda\geq2\epsilon_{0}^{-1}$, then $t_{i}+\epsilon_{0}^{2}\Lambda^{2}(T-t_{i})>T$,
hence
\[
\sup_{t\in[t_{i},T)}\sup_{y\in\psi_{i}(\mathcal{A})}|\operatorname{Rm}_{\widetilde{g}_{t}}|_{\widetilde{g}_{t}}(y)\leq\frac{1}{4(T-t_{i})}
\]
whenever $i=i(\Lambda)\in \mathbb{N}$ is sufficiently large. Fixing such $i$, we set $\widetilde{\mathcal{A}}:=\psi_{i}(\mathcal{A})$ and integrate
\[
\frac{d}{dt}\log\text{vol}_{\widetilde{g}_{t}}(\widetilde{\mathcal{A}})=-\frac{1}{\text{vol}_{\widetilde{g}_{t}}(\widetilde{\mathcal{A}})}\int_{\widetilde{\mathcal{A}}}R_{\widetilde{g}_{t}}\frac{1}{2}\widetilde{\omega}_{t}^{2}\geq-\frac{C}{T-t_{i}}
\]
from time $t_{i}$ to time $t$ to obtain
\[
\inf_{t\in[t_{i},T)}\text{vol}_{\widetilde{g}_{t}}(M)\geq\inf_{t\in[t_{i},T)}\text{vol}_{\widetilde{g}_{t}}(\widetilde{\mathcal{A}})\geq e^{-C}\text{vol}_{\widetilde{g}_{t_{i}}}(\widetilde{\mathcal{A}})>0,
\]
a contradiction.

\eqref{prop:curvaturenottozero2} By \eqref{prop:curvaturenottozero0}, \eqref{prop:curvaturenottozero1}, and \cite[Proof of Proposition 3.1]{CCD}, there is a sequence $x_i \to \infty$ such that the sequence $(X,J,g,x_i)$ converges in the smooth pointed Cheeger-Gromov sense to the cylindrical K\"ahler metric on $\mathbb{C}\times \mathbb{P}^1$, so the claim follows from \cite[Corollary 2.3]{CCD}.
\end{proof}

\begin{Proposition} $\mathcal{C}$ is equivariantly isomorphic to $\mathbb{C}$ equipped with a linear torus action.
\end{Proposition}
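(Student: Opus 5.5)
Since $X$ has complex dimension $2$ and $\pi:X\to\mathcal{C}$ is a polarized Fano fibration, I would first pin down the dimension of $\mathcal{C}$, then show it is smooth and isomorphic to $\mathbb{C}$, and finally linearize the torus action. The possibilities are $\dim_{\mathbb{C}}\mathcal{C}\in\{0,1,2\}$.

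\textbf{Case $\dim\mathcal{C}=0$.} Then $X$ is a single fiber, hence compact. This contradicts Lemma \ref{lem:noncompact}.

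\textbf{Case $\dim\mathcal{C}=2$.} Then $\pi$ is birational, and $X\to\mathcal{C}$ contracts only finitely many compact curves, namely those lying over finitely many points. This contradicts Proposition \ref{prop:curvaturenottozero}\eqref{prop:curvaturenottozero2}, which provides compact curves $D\cong\mathbb{P}^1$ outside every compact set. Such curves must be contracted by $\pi$, since $\mathcal{C}$ is affine and so contains no compact curves. There would then be infinitely many contracted curves escaping to infinity, but the exceptional locus of a proper birational morphism is compact over a finite set. A cleaner way to say this: by the Sun--Zhang construction, $\pi$ is equivariant and the exceptional set is a closed $\mathbb{T}$-invariant analytic subset with compact image. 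An alternative route in this case is that $\mathcal{C}$ would be a K\"ahler cone and $X$ asymptotic to it, contradicting Proposition \ref{prop:curvaturenottozero}\eqref{prop:curvaturenottozero1}. I would mention this route too, as the outline suggests it.

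\textbf{Case $\dim\mathcal{C}=1$.} Now $\mathcal{C}$ is a normal, hence smooth, affine curve carrying an effective algebraic torus action induced from the holomorphic action generated by $J\nabla f$. This action has a good attracting structure: the coordinate ring $R(\mathcal{C})$ is positively graded with $R_0=\mathbb{C}$, as in \cite{SunZhang}, because the weights come from the Reeb-type vector field with proper potential.

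\begin{itemize}
\item If the induced action on $\mathcal{C}$ were trivial, positivity of the grading would force $\mathcal{C}$ to be a point, which is impossible in this case. So the action on $\mathcal{C}$ is nontrivial.
\item A smooth affine curve with a nontrivial $\mathbb{C}^*$-action and positively graded coordinate ring is $\mathbb{C}$. Indeed, the ring is a normal positively graded domain of dimension $1$ with $R_0=\mathbb{C}$, so it is a polynomial ring in one homogeneous generator, and the action is linear with positive weight.
\end{itemize}

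\textbf{Main obstacle.} I expect the hardest step to be verifying that the grading is positive, i.e.\ that $R_0=\mathbb{C}$ and all weights are positive. This amounts to checking that the perturbed vector field from the preceding proof has positive weights on holomorphic functions pulled back from $\mathcal{C}$. I would try to deduce this from properness and the lower bound of the Hamiltonian $u$ together with \eqref{eq:laplacian}, following \cite{SunZhang}. Concretely, a holomorphic function $h$ of weight $\lambda$ satisfies $|h|$ growing like $e^{\lambda u/2}$ along the flow. If $\lambda\le0$, then $h$ is bounded on the proper sublevel structure, and Liouville/maximum-principle arguments along the fibers force $h$ to be constant.
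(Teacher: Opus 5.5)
Your proposal is correct and matches the paper's proof: Lemma \ref{lem:noncompact} rules out a point. The escaping $\mathbb{P}^1$'s from item (iii) of Proposition \ref{prop:curvaturenottozero} rule out $\dim_{\mathbb{C}}\mathcal{C}=2$ because the exceptional locus of $\pi$ would be compact; the paper phrases this as $X$ being $1$-convex. Finally, a normal, hence smooth, affine curve with an attractive torus action is equivariantly $\mathbb{C}$ with a linear action.

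The differences are minor. You prove this last fact directly from the positively graded coordinate ring instead of citing \cite{CarrKutt}. The positivity you flag as the main obstacle is exactly the attractiveness of the torus action already supplied by the construction of \cite{SunZhang}, so it needs no separate argument, and neither does the asymptotic-cone alternative.
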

\begin{proof}
    By Lemma \ref{lem:noncompact}, $\mathcal{C}$ is not a point. If $\dim_{\mathbb{C}}(\mathcal{C})=2$, then $\pi$ restricts to a biholomorphism $X\setminus \pi^{-1}(A)\cong \mathcal{C} \setminus A$ for some compact subset $A \subseteq \mathcal{C}$. It follows that $X$ is 1-convex, contradicting Proposition \eqref{prop:curvaturenottozero}\eqref{prop:curvaturenottozero2}. Thus $\dim_{\mathbb{C}}(\mathcal{C})=1$. Because $\mathcal{C}$ is a normal complex curve, it must be smooth. By \cite[paragraph before Lemma 2.1]{CarrKutt}, any smooth affine curve which admits an attractive torus action must be $\mathbb{C}^{\ast}$-equivariantly isomorphic to $\mathbb{C}$ equipped with a linear torus action. 
\end{proof}

\subsection{Properties of the central fiber}
\label{subsection:centralfiber}

Let $\mathbb{T}^{\mathbb{C}}$ be the complexification of the torus generated by $J\nabla f$. Since the generic fiber of $\pi:X\to \mathbb{C}$ is reduced and isomorphic to $\mathbb{P}^1$, and because $\pi$ is $\mathbb{T}^{\mathbb{C}}$-equivariant for some attractive torus action on $\mathbb{C}$, it follows that $X\setminus \pi^{-1}(0)$ is biholomorphic to $\mathbb{P}^1\times \mathbb{C}^{\ast}$, where $\mathbb{C}^{\ast}$ is equipped with the $\mathbb{T}^{\mathbb{C}}$-action induced by the action on the base $\mathbb{C}$. 

In this subsection, we describe the scheme-theoretic central fiber 
\begin{equation*}
    \pi^{-1}(0)=E=\sum_{j=1}^Q a_j E_j,
\end{equation*}
where the $E_j$ are the irreducible components of $\pi^{-1}(0)$, and $a_j$ are positive integers. 
We let $\rho: \widetilde{X} \to X$ denote the minimal resolution of $X$, so that $\rho$ restricts to an isomorphism over $X\setminus E$. 

\begin{Lemma} \label{lem:curvenumbers}
\begin{enumerate}[(i)]

    \item \label{lem:curvenumbers1} $E_i\cdot E=0$, $E_i \cdot E_i\leq 0$, and $E_i \cdot E_j \geq 0$ for $i,j=1,...,Q$ with $i\neq j$. Moreover, $E_i \cdot E_i=0$ if and only if $Q=1$. 
    
    \item \label{lem:curvenumbers2} If $Q\geq 2$, then the proper transform $\widetilde{E}_i$ of each $E_i$ is a $(-1)$-curve. If $Q=1$, then the proper transform of $E_1$ is a $\mathbb{P}^1$ with self-intersection $0$ or $-1$.
\end{enumerate}
\end{Lemma}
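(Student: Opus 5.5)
The plan is to do part (i) with Mumford's $\mathbb{Q}$-valued intersection theory on the normal surface $X$. This theory is available because $X$ has only finitely many quotient singularities, so every Weil divisor is $\mathbb{Q}$-Cartier. Part (ii) then follows by pulling back to the minimal resolution $\rho:\widetilde{X}\to X$ and applying adjunction there, where all intersection numbers are integers.

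For (i), I first note that $E=\pi^{\ast}(0)=\operatorname{div}(z\circ\pi)$, where $z$ is the coordinate on $\mathcal{C}\cong\mathbb{C}$. Hence $E$ is linearly equivalent to $0$, and so $E_i\cdot E=0$ for every $i$; equivalently, $E$ is linearly equivalent to a nearby fiber disjoint from $E_i$. For $i\neq j$ the curves $E_i,E_j$ are distinct, so $E_i\cdot E_j\geq 0$. It follows that
\begin{equation*}
a_iE_i\cdot E_i=-\sum_{j\neq i}a_jE_i\cdot E_j\leq 0 .
\end{equation*}
Next I use that $\pi^{-1}(0)$ is connected. This holds because $\pi$ is proper with connected generic fiber onto a normal base, by Zariski's main theorem or Stein factorization. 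If $Q\geq 2$, connectedness gives for each $i$ some $j\neq i$ with $E_i\cdot E_j>0$, hence $E_i\cdot E_i<0$. If $Q=1$, then $E_1\cdot E_1=a_1^{-1}E_1\cdot E=0$. Together these give the stated equivalence.

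For (ii), I record two facts. First, since $\pi$ is a polarized Fano fibration, $-K_X$ is $\pi$-ample, so $K_X\cdot E_i<0$. Second, for the minimal resolution of quotient surface singularities we may write $K_{\widetilde{X}}=\rho^{\ast}K_X-\sum_k c_kC_k$ with $c_k\geq 0$, where the $C_k$ are the exceptional curves. This holds because the discrepancies of a minimal resolution are nonpositive, by the negativity lemma together with $K_{\widetilde{X}}\cdot C_k\geq 0$. Writing $\rho^{\ast}E_i=\widetilde{E}_i+\sum_k e_{ik}C_k$ with $e_{ik}\geq 0$, and using $\rho^{\ast}D\cdot C_k=0$, I obtain
\begin{equation*}
K_{\widetilde{X}}\cdot\widetilde{E}_i=K_X\cdot E_i-\sum_k c_kC_k\cdot\widetilde{E}_i<0,\qquad \widetilde{E}_i\cdot\widetilde{E}_i=E_i\cdot E_i-\sum_k e_{ik}C_k\cdot\widetilde{E}_i\leq E_i\cdot E_i .
\end{equation*}
Both quantities are integers on the smooth surface $\widetilde{X}$. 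Hence $K_{\widetilde{X}}\cdot\widetilde{E}_i\leq -1$. Moreover, by (i), $\widetilde{E}_i\cdot\widetilde{E}_i\leq -1$ when $Q\geq 2$ and $\widetilde{E}_1\cdot\widetilde{E}_1\leq 0$ when $Q=1$. Adjunction gives
\begin{equation*}
2p_a(\widetilde{E}_i)-2=K_{\widetilde{X}}\cdot\widetilde{E}_i+\widetilde{E}_i\cdot\widetilde{E}_i\leq -2,
\end{equation*}
so $p_a(\widetilde{E}_i)=0$ and equality holds. Therefore $\widetilde{E}_i\cong\mathbb{P}^1$ and the pair $(K_{\widetilde{X}}\cdot\widetilde{E}_i,\widetilde{E}_i^2)$ sums to $-2$. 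When $Q\geq 2$ this forces $(-1,-1)$, so each $\widetilde{E}_i$ is a $(-1)$-curve. When $Q=1$ it forces $(-1,-1)$ or $(-2,0)$, so $\widetilde{E}_1^2\in\{0,-1\}$.

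I expect the main obstacle to be justifying the intersection-theoretic inputs in the orbifold setting. The first is that $-K_X$ is $\pi$-ample in the precise sense that $K_X\cdot E_i<0$; I would deduce this from the construction of the polarized Fano fibration in the preceding proposition. The second is the sign $c_k\geq 0$ of the discrepancies. For this I would cite the standard fact that quotient singularities are log terminal and that minimal resolutions have $K_{\widetilde{X}}$ relatively nef. The rest of the argument is formal.
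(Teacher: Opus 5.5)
Your proof is correct. Part (i) is essentially the paper's argument: zero intersection of $E_i$ with the whole fiber, $E_i\cdot E_j\geq 0$ for $i\neq j$, and connectedness of $\pi^{-1}(0)$. The only difference is that you justify $E_i\cdot E=0$ via $E=\operatorname{div}(z\circ\pi)$ and connectedness via Stein factorization, where the paper takes both for granted.

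Part (ii) takes a somewhat different route. The paper does not transfer anything from $X$. It obtains $K_{\widetilde{X}}\cdot\widetilde{E}_i<0$ by citing the argument of \cite[Proposition 5.7]{CHM}. For the self-intersection, it reruns the argument of (i) directly on the smooth surface $\widetilde{X}$, using two facts: the central fiber of $\pi\circ\rho$ (which now also contains the exceptional curves of $\rho$) is connected, and it has zero intersection with $\widetilde{E}_i$. This gives $\widetilde{E}_i^2\leq 0$, strictly when $Q\geq 2$, using only integer intersection theory.

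You instead push information from $X$ to $\widetilde{X}$ through Mumford's pullback. From $e_{ik}\geq 0$ and the nonpositivity of minimal-resolution discrepancies, you get the sharper comparisons $\widetilde{E}_i^2\leq E_i^2$ and $K_{\widetilde{X}}\cdot\widetilde{E}_i\leq K_X\cdot E_i<0$. This costs a little more machinery: the $\mathbb{Q}$-valued intersection theory on $X$ is now used in (ii) as well, and you need both sign facts from the negativity lemma. In exchange, the canonical class step becomes self-contained rather than a citation.

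One small slip: when $Q=1$ your bounds give only $K_{\widetilde{X}}\cdot\widetilde{E}_1+\widetilde{E}_1^2\leq -1$, not $\leq -2$ as written. You need the parity of $2p_a-2$ (equivalently $p_a\in\mathbb{Z}_{\geq 0}$) to conclude $p_a(\widetilde{E}_1)=0$. After that, your case analysis $(-1,-1)$ or $(-2,0)$ goes through unchanged.
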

\begin{proof} 
\eqref{lem:curvenumbers1} For $1\leq i \neq j\leq Q$, $E_i,E_j$ do not contain a common irreducible component, so that $E_i \cdot E_j \geq 0$. Because $E_i$ is disjoint from any regular fiber of $\pi$, it follows that $E_i \cdot E=0$, hence
\begin{align*}
    0= E_i \cdot \sum_{j=1}^Q a_j E_j \geq a_i E_i \cdot E_i,
\end{align*}
with equality if and only if $E_i \cdot E_j =0$ for every $j\in \{1,...,Q\} \setminus \{i\}$. That is, if equality holds, $E_i$ is a connected component of $E$; however, the fibers of $\pi$ are connected, so in this case, $Q=1$. 

\eqref{lem:curvenumbers2} Arguing as in \cite[Proposition 5.7]{CHM}, we have $K_{\widetilde{X}}\cdot \widetilde{E}_i <0$ for $1\leq i\leq Q$. Because $\widetilde{X}\to X\to \mathbb{C}$ still has connected fibers, we can argue as in \eqref{lem:curvenumbers1} (with $X$ replaced by $\widetilde{X}$) to conclude $\widetilde{E}_i \cdot \widetilde{E}_i \leq 0$, with strict inequality unless $Q=1$. The claim then follows by the adjunction formula.
\end{proof}

We now show that in fact there are at most $2$ irreducible components of $E$.

\begin{Proposition} \label{prop:components}
   $Q\leq 2$.
\end{Proposition}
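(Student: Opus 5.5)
The plan is to repeat the adjunction count from Lemma \ref{lem:singularfibertopology}, this time on the minimal resolution $\rho:\widetilde{X}\to X$. On $X$ itself the canonical class of the orbifold is only $\mathbb{Q}$-Cartier, so working on $\widetilde{X}$ keeps every intersection number an integer. Suppose $Q\geq 2$; otherwise there is nothing to prove. Write the scheme-theoretic central fiber of $\widetilde{\pi}:=\pi\circ\rho:\widetilde{X}\to\mathbb{C}$ as
\begin{equation*}
\widetilde{E}:=\widetilde{\pi}^{-1}(0)=\rho^{\ast}E=\sum_{j=1}^Q a_j\widetilde{E}_j+\sum_{k} b_k C_k .
\end{equation*}
Here the $C_k$ are the $\rho$-exceptional curves, and $b_k$ are positive integers. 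The coefficient of $\widetilde{E}_j$ is still $a_j$, because $\rho$ is an isomorphism near the generic point of $E_j$.

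\textbf{Step 1: numerical equivalence with a regular fiber.} Let $F'=\widetilde{\pi}^{-1}(c)$ for some $c\neq 0$. Then $F'\cong\mathbb{P}^1$ with $F'\cdot F'=0$, so adjunction gives $K_{\widetilde{X}}\cdot F'=-2$. The divisors $\widetilde{E}$ and $F'$ differ by $\operatorname{div}(\widetilde{\pi}^{\ast}(z/(z-c)))$, which is the divisor of a meromorphic function with compact support. Hence the degree of the line bundle $K_{\widetilde{X}}$ on the two compact divisors agrees, and
\begin{equation*}
-2=K_{\widetilde{X}}\cdot\widetilde{E}=\sum_{j=1}^Q a_j\,K_{\widetilde{X}}\cdot\widetilde{E}_j+\sum_k b_k\,K_{\widetilde{X}}\cdot C_k .
\end{equation*}
This is the step needing the most care, since $\widetilde{X}$ is noncompact. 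I would justify it by restricting to the preimage of a disk containing $0$ and $c$, where all the divisors involved are compact.

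\textbf{Step 2: signs of the terms.} By Lemma \ref{lem:curvenumbers}\eqref{lem:curvenumbers2}, since $Q\geq 2$ each $\widetilde{E}_j$ is a $(-1)$-curve, so $K_{\widetilde{X}}\cdot\widetilde{E}_j=-1$. Even without this, the strict inequality $K_{\widetilde{X}}\cdot\widetilde{E}_j<0$ from its proof already gives $\leq -1$ by integrality.

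For the exceptional curves, minimality of $\rho$ means no $C_k$ is a $(-1)$-curve. The singularities are isolated quotient singularities, so every $C_k$ is a smooth rational curve with $C_k\cdot C_k\leq -2$, and adjunction gives $K_{\widetilde{X}}\cdot C_k=-2-C_k\cdot C_k\geq 0$. More generally, any irreducible exceptional curve $C$ of a minimal resolution satisfies $K\cdot C\geq 0$: if $C^2\leq -2$ this follows from adjunction, and $C^2=-1$ with $K\cdot C<0$ would force $C$ to be a $(-1)$-curve.

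\textbf{Step 3: conclusion.} Combining Steps 1 and 2,
\begin{equation*}
-2\geq -\sum_{j=1}^Q a_j ,
\end{equation*}
and in fact $\sum_j a_j=2+\sum_k b_k K_{\widetilde{X}}\cdot C_k$. One further observation is needed to bound $Q$. The $C_k$ lie inside the central fiber $\widetilde{E}$, and $\widetilde{E}\cdot C_k=0$, so the intersection computation of Lemma \ref{lem:curvenumbers}\eqref{lem:curvenumbers1} constrains their configuration. However, the bound $Q\leq 2$ follows more directly from the sharper inequality $\sum_j a_j\leq 2$, which holds exactly when $\sum_k b_k K_{\widetilde{X}}\cdot C_k=0$.

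If that vanishing fails, I would argue instead as follows. The curves $\widetilde{E}_j$ are disjoint from the exceptional locus only if $X$ is smooth along them. In general, $K_{\widetilde{X}}\cdot \rho^{\ast}E=\rho^{\ast}K_X\cdot\rho^{\ast}E+\sum(\text{discrepancy terms})\cdot\rho^{\ast}E$, and the discrepancy terms vanish since $\rho^{\ast}E\cdot C_k=0$. This gives $-2=K_X\cdot E=\sum_j a_j K_X\cdot E_j$ on $X$. Here $K_X\cdot E_j<0$ because $-K_X$ is $\pi$-ample. This $\mathbb{Q}$-valued computation, together with $K_X\cdot E_j=K_{\widetilde{X}}\cdot\widetilde{E}_j+(\text{nonneg.\ correction})$, is the fallback. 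The main obstacle I expect is precisely controlling this correction: showing $K_X\cdot E_j\leq -1$ rather than merely $<0$, or else extracting $\sum_j a_j\leq 2$ directly from the resolution identity above. Either gives $Q\leq\sum_j a_j\leq 2$.
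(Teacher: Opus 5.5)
Your Steps 1 and 2 are correct, but they bound $Q$ from the wrong side. Substituting $K_{\widetilde X}\cdot\widetilde E_j=-1$ and $K_{\widetilde X}\cdot C_k\ge 0$ into $-2=K_{\widetilde X}\cdot\widetilde E$ gives $\sum_j a_j=2+\sum_k b_k\,K_{\widetilde X}\cdot C_k\ge 2$. That is a lower bound, not an upper bound. The upper bound $\sum_j a_j\le 2$ would need every exceptional curve to be a $(-2)$-curve, and that is not known at this stage: a $\frac14(1,1)$ point, for instance, contributes a $(-4)$-curve with $K_{\widetilde X}\cdot C=2$. Your fallback has the same sign problem. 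Write $K_{\widetilde X}=\rho^{\ast}K_X+\sum_k d_kC_k$ with $d_k\le 0$ (by minimality). Then $K_X\cdot E_j=-1-\sum_k d_k\,C_k\cdot\widetilde E_j\ge -1$, so $K_X\cdot E_j\in[-1,0)$, and $-2=\sum_j a_j\,K_X\cdot E_j$ again gives only $\sum_j a_j\ge 2$. The correction you hoped to control always pushes $K_X\cdot E_j$ toward $0$, never below $-1$.

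This is not a technical difficulty. The ingredients you use cannot give $Q\le 2$ by themselves: isolated quotient singularities, $-K_X$ relatively ample, generic fiber $\mathbb{P}^1$, and proper transforms being $(-1)$-curves. For example, take the toric surface with fan rays $(0,1),(1,1),(2,-1),(1,-2),(0,-1)$, fibered over $\mathbb{C}$ via $(x,y)\mapsto x$. It has two $\frac13(1,1)$ points and central fiber $D_1+2D_2+D_3$, with $-K_X\cdot D_j=\frac23,\frac13,\frac23$. Its minimal resolution has central chain $-1,-3,-1,-3,-1$. Here $Q=3$, and your identity reads $4=2+1+1$.

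The missing idea is extrinsic input from the flow, which is what the paper uses:
\begin{enumerate}
\item By Zariski's lemma, the span of the $E_j$ contains a $(Q-1)$-dimensional negative definite subspace.
\item These classes are represented by cycles in a region $U$ avoiding the orbifold points. This uses Mayer--Vietoris, since the links are rational homology spheres, and Mumford's rational intersection numbers are preserved.
\item The cycles are pushed into $M_{\operatorname{BCCD}}$ by the orientation-preserving embeddings $\iota\circ\psi_i$ supplied by Lemma \ref{lem:singularfibertopology} and Lemma \ref{lem:Hncenters}.
\end{enumerate}
Negative definiteness forces this pushforward to be injective. But $H_2(M_{\operatorname{BCCD}},\mathbb{R})$ has intersection matrix $\begin{pmatrix}-1&1\\1&-1\end{pmatrix}$, which has no $2$-dimensional negative definite subspace. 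Hence $Q-1\le 1$.
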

\begin{proof}
 Let $X_{\sing}=\{p_i\}_{i=1}^P$ denote the set of orbifold points of $X$. Let $V$ be a small neighborhood of  $X_{\sing}$ and $U$ an open subset of $X$, compactly contained in $X\setminus X_{\sing}$ such that $U\cap V$ is diffeomorphic to $\bigsqcup_{i=1}^P (S^3/\Gamma_i \times (-1,1))$, where $\Gamma_i$ is the isotropy group of $X$ at $p_i$. We may also assume $\cup_{i=1}^Q E_i \subseteq U \cup V$. By the Mayer-Vietoris sequence for $U$ and $V$, the following is exact:
    \begin{equation}
        H_2(U\cap V,\mathbb{R})\rightarrow H_2(U,\mathbb{R})\oplus H_2(V,\mathbb{R})\rightarrow H_2(U\cup V,\mathbb{R})\rightarrow H_1(U\cap V,\mathbb{R}).
    \end{equation}
    Because $U\cap V$ is a finite disjoint union of rational homology spheres and $V$ is a finite disjoint union of contractible sets, it follows that the map $H_2(U,\mathbb{R})\to H_2(U\cup V,\mathbb{R})$ is an isomorphism. We may therefore choose 2-cycles $\sigma_1,...,\sigma_Q$ in $U$ which are homologous in $U\cup V$ to $E_1,...,E_Q$, respectively. By \cite[proof of Proposition 5.10(i)]{CHM}, the topological intersection number $\sigma_i \cdot \sigma_j$ in $U$ is moreover equal to the intersection number $E_i \cdot E_j \in \mathbb{Q}$ in the sense of \cite{mumford}. Let $W \subseteq H_2(U,\mathbb{R})$ be a maximal negative-definite subspace of $\operatorname{span}\{\sigma_1,...,\sigma_Q\}$.     
    By \cite[Lemma I.2.10]{BKV} applied to the matrix $(-\sigma_i \cdot \sigma_j)_{i,j=1}^Q$, it follows that $\dim(W)\geq Q-1$.

    By Lemma \ref{lem:singularfibertopology}, there is a neighborhood $\mathcal{V}$ of $\widetilde{\pi}^{-1}(\widehat{\pi}(\mu))$ in $M$ and an open holomorphic embedding $\iota:\mathcal{V} \to M_{\operatorname{BCCD}}$. By Lemma \ref{lem:Hncenters}\eqref{lem:Hncenters2}, we have $\psi_i(\overline{U})\subseteq \mathcal{V}$ for sufficiently large $i\in \mathbb{N}$.  It follows that $\iota \circ \psi_i:U \to M_{\operatorname{BCCD}}$ is an orientation-preserving open embedding for sufficiently large $i\in \mathbb{N}$. Any nonzero $\sigma \in W$ satisfies
    \begin{equation*}
        (\iota \circ \psi_i)_{\ast}\sigma \cdot (\iota \circ \psi_i)_{\ast}\sigma = \sigma \cdot \sigma<0,
    \end{equation*}
    so that in particular $(\iota \circ \psi_i)_{\ast}\sigma \neq 0$. This implies that $(\iota \circ \psi_i)_{\ast}W$ is a subspace of $H_2(M_{\operatorname{BCCD}},\mathbb{R})$ of dimension at least $Q-1$ on which the intersection pairing is negative definite. However, this subspace of $H_2(M_{\operatorname{BCCD}},\mathbb{R})$ is at most 1-dimensional, hence $Q-1\leq 1$. 
\end{proof}

\subsection{Identification of the relative minimal model}

\label{subsection:relativeMM}

Recall that $\rho:\widetilde{X}\to X$ is the minimal resolution of $X$. In this subsection, we show that $\widetilde{X}\to \mathbb{C}$ is obtained by iteratively blowing up $\mathbb{C}\times \mathbb{P}^1$ at points in the central fiber.

\begin{Lemma} \label{lem:relativeMM}
    There is a birational morphism $\rho_{\min}:\widetilde{X}\to \mathbb{C}\times \mathbb{P}^1$ making the following diagram commute:
    \[
\begin{tikzcd}[row sep=large, column sep=large]
& \widetilde{X} \arrow[dl, "\rho_{\min}"'] \arrow[dr, "\rho"] & \\
\mathbb{C}\times \mathbb{P}^1 \arrow[dr] 
&& X \arrow[dl, "\pi"] \\
& \mathbb{C} &
\end{tikzcd}
.\]
\end{Lemma}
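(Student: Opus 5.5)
The plan is to run the relative minimal model program for the smooth surface $\widetilde{X}$ over $\mathbb{C}$. The composite $\widetilde{\pi}:=\pi\circ\rho:\widetilde{X}\to \mathbb{C}$ is a proper surjective morphism with connected fibers from a smooth surface to a smooth curve. Moreover, $\rho$ is an isomorphism over $X\setminus E$, so $\widetilde{X}\setminus \widetilde{\pi}^{-1}(0)\cong \mathbb{P}^1\times \mathbb{C}^{\ast}$ and the general fiber is a reduced $\mathbb{P}^1$.

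I would contract $(-1)$-curves in the central fiber one at a time. Suppose some fiber component is a $(-1)$-curve. By Castelnuovo's criterion (in its relative analytic form, since everything is proper over $\mathbb{C}$), it can be blown down to give a smooth surface still fibered over $\mathbb{C}$. Each contraction lowers the number of components of the central fiber. The central fiber of $\widetilde{X}$ is compact with finitely many components: the exceptional curves of the minimal resolution are finite because $X$ has finitely many orbifold points, and there are at most two proper transforms $\widetilde{E}_i$ by Proposition \ref{prop:components}. So the process terminates at a smooth surface $S\to\mathbb{C}$ whose central fiber contains no $(-1)$-curve.

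Next I would identify $S$, which is relatively minimal. I would apply the same argument as in Lemma \ref{lem:singularfibertopology} to the central fiber $F_S=\sum b_k C_k$ of $S$. The fiber is numerically equivalent to a general fiber $\mathbb{P}^1$, so $K_S\cdot F_S=-2$. Every component satisfies $C_k\cdot F_S=0$ and hence $C_k^2\le 0$, with equality only if the fiber is irreducible.

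The key point is to show $K_S\cdot C_k<0$ for every component. Once that holds, adjunction together with $C_k^2\le0$ gives a $(-1)$-curve whenever $C_k^2<0$. So minimality forces the fiber to be irreducible, with $C^2=0$, $p_a=0$ and multiplicity $1$. Then \cite[Proposition 1-4-5]{matsuki} gives that $S\to\mathbb{C}$ is a $\mathbb{P}^1$-bundle near $0$. Since $S\to\mathbb{C}$ is a $\mathbb{P}^1$-bundle over the contractible Stein curve $\mathbb{C}$ (including over $\mathbb{C}^\ast$), it is trivial, so $S\cong \mathbb{C}\times\mathbb{P}^1$ over $\mathbb{C}$. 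The composite of the contractions is then $\rho_{\min}$, and the diagram commutes by construction.

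I expect the main obstacle to be establishing $K_S\cdot C_k<0$, since a priori some component might have $K\cdot C\ge0$, for example a $(-2)$-curve from the resolution. I would bypass this with the standard relative-minimal-model fact that a relatively minimal fibration whose general fiber is $\mathbb{P}^1$ has no singular fibers. The argument runs as follows:
\begin{enumerate}[(a)]
\item Write $-2=K_S\cdot F_S=\sum b_k K_S\cdot C_k$. Since the sum is negative, some component has $K_S\cdot C<0$.
\item If the fiber were reducible, that component would satisfy $C^2<0$. By adjunction, $2p_a-2=K_S\cdot C+C^2\le-2$, which forces $C$ to be a $(-1)$-curve, contradicting minimality.
\item So the fiber is irreducible, and then $C^2=0$ and $b\,K_S\cdot C=-2$ give $p_a=0$ and $b=1$ as before.
\end{enumerate}
This avoids needing negativity on every component. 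Lemma \ref{lem:curvenumbers}\eqref{lem:curvenumbers2} is not strictly needed, but it is consistent with this picture.
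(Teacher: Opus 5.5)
Your proposal is correct and takes essentially the same route as the paper. Both contract vertical $(-1)$-curves, which all lie over the origin, until none remain. Both then use $K\cdot F=-2$, the fact that every component of a reducible fiber has $C_i^2<0$, and adjunction to produce a forbidden $(-1)$-curve when the fiber is reducible. Finally, both use $p_a(C)=1-\frac{1}{m}$ to conclude that the central fiber is a reduced $\mathbb{P}^1$, so the minimal model is the trivial bundle $\mathbb{C}\times\mathbb{P}^1$. Your steps (a)--(c) are exactly the paper's argument, and, as you realized, requiring $K_S\cdot C_k<0$ for every component is unnecessary.
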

\begin{proof}
  Let $\rho_{\min}:\widetilde{X}\to \widetilde{X}_{\rm min}$ be obtained from $\widetilde{X}$ by successively contracting vertical $(-1)$-curves until there are no $(-1)$-curves on $\widetilde{X}_{\rm min}$. Note that
$(-1)$-curves are always over the origin, so the process terminates and there is a holomorphic fibration $\pi_{\min}:\widetilde{X}_{\rm min}\to \mathbb{C}$ satisfying $\pi_{\min}\circ \rho_{\min}= \pi \circ \rho$, whose general fiber is $\IP^1$. Let $F=\sum_i m_i C_i$ denote the scheme-theoretic fiber $\pi_{\min}^{-1}(0)$, where $m_i$ are positive integers. Then we have 
  \(C_i\cdot F=0.\)
 Flatness of $\pi_{\min}$ gives $K\cdot F=-2$, where
$K:=K_{\widetilde{X}_{\rm min}}$. Suppose $F$ had more than one component. Since $F$ is
connected, each $C_i$ meets some $C_j$ with $j\neq i$, hence
 $C_i^2<0$ for all $i$. On the other hand $\sum_i m_i(K\cdot C_i)=-2<0$ forces
$K\cdot C_{i_0}\le-1$ for some $i_0$. Letting $p_a(C_{i_0})$ denote the arithmetic genus of $C_{i_0}$,  adjunction then gives
\[
  2p_a(C_{i_0})-2=C_{i_0}^2+K\cdot C_{i_0}\le-2 ,
\]
so that $p_a(C_{i_0})=0$, hence $C_{i_0}\cong \mathbb{P}^1$ and $C_{i_0}^2=K\cdot C_{i_0}=-1$. Then $C_{i_0}$ is a $(-1)$-curve in a fiber,
contradicting minimality. So $F=mC$ with $C^2=0$, $K\cdot C=-2/m$. Adjunction then gives $p_a(C)=1-\frac{1}{m}\in \mathbb Z_{\geq 0}$. Therefore $m=1$ and $C\cong \mathbb{P}^1$, so $\pi_{\min}$ has no singular fibers. Because any such fibration over $\mathbb{C}$ is trivial, it follows that $\widetilde{X}_{\rm min}$ is biholomorphic to $\mathbb P^1\times \mathbb C$.
\end{proof}

\subsection{All singularities are Wahl}
\label{subsection:Wahl}

In this subsection, we show that if $X$ has any orbifold singularities, they must belong to a specific class of cyclic quotient singularities called Wahl singularities. 

Suppose $x\in X$ is an orbifold point, and let $\Sigma_r :=\partial B_g(x,r)$ be a small geodesic sphere around $x$, which is diffeomorphic to $\mathbb{S}^3/\Gamma$ for some finite subgroup $\Gamma \leq U(2)$ acting freely on $\mathbb{S}^3$. Define 
\begin{equation*} A'(r):=\overline{B}_g(x,2r)\setminus B_g(x,\frac{r}{4}), \qquad A(r):=\overline{B}_g(x,r)\setminus B_g(x,\frac{r}{2}).
\end{equation*}

\begin{Lemma} \label{lem:primitiveonlimit} For $r>0$ sufficiently small, there exists a 1-form $\alpha \in \mathcal{A}^1(A'(r))$ such that $d\alpha = \omega|_{A'(r)}$ and $\sup_{A'(r)}|\alpha|_g \leq C(\Gamma)r$.
\end{Lemma}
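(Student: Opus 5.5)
Work in a local uniformizing chart. Near the orbifold point $x$ there is a ball $\widehat{B}\subseteq \mathbb{C}^2$ around $0$ with $\Gamma$ acting linearly and freely on $\widehat{B}\setminus\{0\}$, such that $B_g(x,3r)\cong \widehat{B}_{3r}/\Gamma$ for small $r$. The lift $\widehat{g}$ of $g$ is a smooth $\Gamma$-invariant K\"ahler metric on $\widehat{B}$. The plan is to construct a $\Gamma$-invariant primitive $\widehat{\alpha}$ of $\widehat{\omega}$ on the whole ball $\widehat{B}_{3r}$, with the bound $|\widehat{\alpha}|_{\widehat{g}}\le Cr$ there. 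Then $\widehat{\alpha}$ descends to a 1-form $\alpha$ on $B_g(x,3r)\setminus\{x\}$, hence on $A'(r)$. Because the projection is a local isometry on $X_{\operatorname{reg}}$, the pointwise bound $|\alpha|_g\le Cr$ carries over directly.

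First, choose normal (or holomorphic, nearly Euclidean) coordinates at $0$ for $\widehat{g}$ that are $\Gamma$-equivariant. This is possible because $\Gamma$ fixes $0$ and acts isometrically, so the exponential map of $\widehat{g}$ at $0$ intertwines the linear action of $\Gamma$ on $T_0$. In these coordinates $\widehat{g}_{ij}=\delta_{ij}+O(|z|^2)$ on $\widehat{B}_{3r}$ for $r\le r_0(g,x)$, with constants depending only on the smooth metric near $x$. Next, apply the radial homotopy (Poincar\'e lemma) operator
\[
\widehat{\alpha}:=\int_0^1 \iota_{\partial_s}\big(h^{\ast}\widehat{\omega}\big)\,ds, \qquad h(s,z)=sz,
\]
which gives $d\widehat{\alpha}=\widehat{\omega}$ because $\widehat{\omega}$ is closed. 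Since the dilations $z\mapsto sz$ commute with the linear $\Gamma$-action, $\widehat{\alpha}$ is $\Gamma$-invariant. Its coefficients are $\widehat{\alpha}(z)_j=\int_0^1 s\,\widehat{\omega}_{ij}(sz)z^i\,ds$, so $|\widehat{\alpha}|\le \tfrac12\sup|\widehat{\omega}|\,|z|\le C|z|\le 3Cr$. Comparing Euclidean and $\widehat{g}$ norms costs only a factor close to $1$. Finally, $A'(r)$ lies inside $B_g(x,3r)$, so restricting to the preimage of $A'(r)$ and descending gives the claim.

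The main obstacle is making the constant depend only on $\Gamma$, as the statement writes $C(\Gamma)$, rather than on the geometry near $x$. The direct argument gives $\sup|\widehat{\alpha}|\le (\tfrac12+o(1))\,|z|$ once $r$ is small, because $\widehat{\omega}_{ij}(0)$ is the standard form in normal coordinates. So I would take $r$ small enough, depending on $x$ (which is allowed by ``for $r>0$ sufficiently small''), that the metric distortion is at most $2$. This yields a universal constant, which is in particular of the form $C(\Gamma)$. The only remaining care point is checking that the geodesic annulus $A'(r)$ corresponds, up to factors close to $1$, to a Euclidean annulus in the equivariant chart; this follows from the same metric comparison.
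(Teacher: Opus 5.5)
Your proof is correct, but it takes a different route from the paper.

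The paper sets $\alpha=d^c\varphi$, where $\varphi$ is a local K\"ahler potential for $\omega$ on $B_g(x,2r)$. This potential comes from the local $\partial\bar\partial$-lemma with estimates (Sz\'ekelyhidi, \S7.2), applied in a uniformizing chart and then averaged over $\Gamma$. The averaging step needs $\Gamma$ to act holomorphically, so that each $\gamma^\ast\varphi$ is again a potential. The bound $|d^c\varphi|\le Cr$ then comes from the normalization and estimates of that potential.

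You instead apply the radial homotopy operator of the Poincar\'e lemma directly to the closed form $\widehat\omega$. This gives three things the paper's route does not:
\begin{itemize}
\item $\Gamma$-invariance is automatic, because dilations commute with the linear $\Gamma$-action, so no averaging is needed.
\item The complex structure plays no role.
\item The estimate $|\widehat\alpha|\le(\tfrac12+o(1))|z|$ is explicit.
\end{itemize}
The paper's route yields a primitive of the special form $d^c\varphi$, but that structure is never used afterwards. Lemma \ref{lem:primitivealongsequence} and the subsequent boundary-integral estimate only use $d\alpha=\omega$ and $\sup|\alpha|_g\le Cr$, so your construction serves equally well there.

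One side remark: passing to Riemannian normal coordinates is only needed to make the constant universal. The original uniformizing chart, where $\Gamma$ already acts linearly, gives the same bound with a constant depending on $\widehat g$ near $0$. That is also harmless, since $r$ is chosen depending on $x$ later in the argument anyway.
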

\begin{proof} Take $\alpha =d^c \varphi$ for a K\"ahler potential $\varphi$ of $\omega$ on $B_g(x,2r)$, constructed as in \cite[Section 7.2]{Gaborbook} after pulling back to a local uniformizing chart and averaging over the action of $\Gamma$.
\end{proof}

Let $\psi_i$ be as in \eqref{eq:smoothconvergence}.
For $i=i(r)\in \mathbb{N}$ sufficiently large, we may therefore define 
\begin{equation*} \Sigma_{i,r}:=\psi_i(\partial B_g(x,r)), \quad A_{i,r}=\psi_i(A(r)),\quad A_{i,r}'=\psi_i(A'(r)).\end{equation*}
By Lemma \ref{lem:Hncenters}, we have $\Sigma_{i,r} \subseteq \mathcal{V}$ for sufficiently large $i=i(r)\in \mathbb{N}$. Because $\mathcal{V}$ is simply connected, it follows that $\mathcal{V}\setminus \Sigma_{i,r}$ has exactly two connected components. We let $\mathcal{B}_{i,r}$ denote the closure of the component containing $\psi_i(\partial B_g(x,\frac{r}{2}))$.

\begin{Lemma} \label{lem:intersectionnegative} The intersection form on $H_2(\mathcal{B}_{i,r},\mathbb{R})$ is negative definite, and $b_2(\mathcal{B}_{i,r})\leq 1$. 
\end{Lemma}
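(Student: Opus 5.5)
The plan is to transport everything into $M_{\operatorname{BCCD}}$ via the biholomorphic embedding $\iota:\mathcal{V}\to M_{\operatorname{BCCD}}$ of Lemma \ref{lem:singularfibertopology}, and to compare $\mathcal{B}_{i,r}$ with its complement there. First I record the topology of $M_{\operatorname{BCCD}}$. It is the blowup of $\mathbb{C}\times\mathbb{P}^1$ at one point, so it deformation retracts onto the central fiber $E_1\cup E_2$, and $H_2(M_{\operatorname{BCCD}},\mathbb{Z})\cong\mathbb{Z}^2$ is spanned by $E_1,E_2$. The intersection matrix is $\begin{pmatrix}-1&1\\1&-1\end{pmatrix}$, so the form has signature $(0,1)$ with radical spanned by the fiber class $F=E_1+E_2$. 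Thus the form on $H_2(\mathcal{V},\mathbb{R})\cong H_2(M_{\operatorname{BCCD}},\mathbb{R})$ is negative semidefinite, with one negative direction and a one-dimensional radical spanned by $[F]$.

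Next I show $H_2(\mathcal{B}_{i,r},\mathbb{R})\to H_2(\mathcal{V},\mathbb{R})$ is injective. The boundary $\Sigma_{i,r}\cong \mathbb{S}^3/\Gamma$ is a rational homology sphere. Mayer--Vietoris for $\mathcal{V}=\mathcal{B}_{i,r}\cup\overline{(\mathcal{V}\setminus\mathcal{B}_{i,r})}$, glued along a collar of $\Sigma_{i,r}$, then gives
\[
H_2(\mathcal{V},\mathbb{R})\cong H_2(\mathcal{B}_{i,r},\mathbb{R})\oplus H_2(\mathcal{V}\setminus \mathcal{B}_{i,r},\mathbb{R}),
\]
an orthogonal decomposition for the intersection form, since classes supported in disjoint regions have zero intersection. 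Hence the form on $H_2(\mathcal{B}_{i,r},\mathbb{R})$ is the restriction of a negative semidefinite form, and it is negative definite unless its radical meets $H_2(\mathcal{B}_{i,r},\mathbb{R})$. Because the radical is the line $\mathbb{R}[F]$, and the orthogonal splitting forces the radical to decompose compatibly, the only remaining case is $[F]\in H_2(\mathcal{B}_{i,r},\mathbb{R})$, that is, $\mathcal{B}_{i,r}$ carries the full $H_2$ of $\mathcal{V}$.

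The main obstacle is ruling out $[F]\in H_2(\mathcal{B}_{i,r},\mathbb{R})$. I would do this with symplectic area. Since $[\widetilde\omega_{t_i}]$ is positive on $F$ (in fact $\widetilde\omega_{t_i}(F)=2\pi(T-t_i)\cdot 2$ up to normalization), it suffices to show $\int_{\sigma}\widetilde\omega_{t_i}$ is small for $\sigma\in H_2(\mathcal{B}_{i,r})$. Here Lemma \ref{lem:primitiveonlimit} should enter. Pushing $\alpha$ forward by $\psi_i$ gives $\widetilde\omega_{t_i}=(T-t_i)\,d(\psi_i^{-1})^*\alpha$ approximately on $A'_{i,r}$, with $|\alpha|\le C(\Gamma)r$. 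Gluing this primitive with the exactness of $\omega$ near the orbifold point (the bounded region $\psi_i(B_g(x,r))$ approximates a neighborhood of a quotient singularity) lets me represent classes of $\mathcal{B}_{i,r}$ near $\Sigma_{i,r}$. This bounds their area by $C r^{3}(T-t_i)$ up to $o(1)$ errors, which is $\ll 4\pi(T-t_i)$ for small $r$, contradicting $[F]\in H_2(\mathcal{B}_{i,r},\mathbb{R})$.

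Alternatively, I could argue that $\mathcal{V}\setminus\mathcal{B}_{i,r}$ contains regular fibers near the boundary of $\mathcal{V}$ that are homologous to $F$. Then $[F]$ lies in the complementary summand, which is disjoint from $H_2(\mathcal{B}_{i,r},\mathbb{R})$ in the direct sum. I expect this route to be simpler, using Lemma \ref{lem:Hncenters}\eqref{lem:Hncenters2}, which says $\Sigma_{i,r}$ stays away from $\partial\mathcal{V}$, and the unbounded choice of component. Once $[F]\notin H_2(\mathcal{B}_{i,r},\mathbb{R})$, the form on $H_2(\mathcal{B}_{i,r},\mathbb{R})$ is negative definite. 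Its dimension is then at most the number of negative directions of $H_2(\mathcal{V})$, namely $1$, giving $b_2(\mathcal{B}_{i,r})\le 1$.
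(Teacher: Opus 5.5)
Your second (regular-fiber) route is a correct proof, and it differs from the paper's. Your first (symplectic-area) route should be dropped.

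\textbf{The fiber route versus the paper.} You use Mayer--Vietoris across the rational homology sphere $\Sigma_{i,r}$ to split $H_2(\mathcal{V},\mathbb{R})$ orthogonally into the classes carried by $\mathcal{B}_{i,r}$ and those carried by its complement. You then place the radical line $\mathbb{R}[F]$ in the second summand by exhibiting a regular fiber disjoint from $\mathcal{B}_{i,r}$. A negative semidefinite form restricts to a negative definite one on any subspace meeting its radical trivially, so both conclusions follow.

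The paper never locates the radical. It observes that, because $\partial\mathcal{B}_{i,r}$ is a rational homology sphere, Lefschetz duality makes the intersection form on $H_2(\mathcal{B}_{i,r},\mathbb{R})$ nondegenerate. This does two things at once: it gives injectivity of $H_2(\mathcal{B}_{i,r},\mathbb{R})\to H_2(\mathcal{V},\mathbb{R})$, since a kernel element would lie in the radical; and it forces the image to be a nondegenerate, hence negative definite, subspace of a negative semidefinite rank-one form.

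So the paper's argument needs only the boundary of $\mathcal{B}_{i,r}$ and the algebra of the ambient form. Yours avoids duality on $\mathcal{B}_{i,r}$ but needs one explicit cycle in the complement. Both rely on $\mathcal{B}_{i,r}$ being the compact side of $\Sigma_{i,r}$. Granting that, $\widetilde{\pi}(\mathcal{B}_{i,r})$ is a compact subset of $\mathbb{D}_{y_0}$, so the fiber over any point outside it already lies in the complement. Lemma \ref{lem:Hncenters}\eqref{lem:Hncenters2} is therefore not needed.

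\textbf{Why the area route fails.} It does not work as sketched, although you present it as the main plan.
\begin{itemize}
\item The maps $\psi_i$ are only defined on $X_{\operatorname{reg}}$. So the part of $\mathcal{B}_{i,r}$ inside $\psi_i(\partial B_g(x,\frac{r}{2}))$ is not controlled by the limit, and by Mayer--Vietoris that region carries all of $H_2(\mathcal{B}_{i,r},\mathbb{R})$.
\item There are no $2$-classes ``near $\Sigma_{i,r}$'' to represent, since $H_2(A_{i,r},\mathbb{R})=0$. Hence the bound $Cr^3(T-t_i)$ on areas has no justification.
\item Smallness of $[\widetilde{\omega}_{i,-1}]$ on $H_2(\mathcal{B}_{i,r},\mathbb{R})$ is essentially the content of \eqref{eq:c1vanishing}. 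The paper derives that from the sign $q\leq 0$ in Lemma \ref{lem:boundaryintegral}, which in turn rests on the present lemma. The natural way to complete your area argument is therefore circular at this point of the paper.
\end{itemize}
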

\begin{proof} The intersection form is nondegenerate by the long exact sequence of the pair $(\mathcal{B}_{i,r},\Sigma_{i,r})$, since $\partial \mathcal{B}_{i,r}=\Sigma_{i,r}$ is a rational homology sphere and hence $H_2(\Sigma_{i,r}, \mathbb{R}) = 0$. Because the intersection number is preserved by the map on homology $\iota_{\ast}:H_2(\mathcal{B}_{i,r},\mathbb{R})\to H_2(\mathcal{V},\mathbb{R})$ induced by the inclusion $\iota:\mathcal{B}_{i,r}\hookrightarrow \mathcal{V}$, if $\sigma \in \ker(\iota_{\ast})$, then $\sigma\cdot \sigma'=0$ for all $\sigma'\in H_2(\mathcal{B}_{i,r},\mathbb{R})$. By the non-degeneracy of the intersection form on $H_2(\mathcal{B}_{i,r},\mathbb{R})$, it follows that $\sigma=0$. Thus $\iota_{\ast}$ is injective. Because the intersection form of $H_2(\mathcal{V},\mathbb{R})$ is non-positive and has rank 1, the claim follows.
\end{proof}

\begin{Lemma} \label{lem:primitivealongsequence} For $i=i(r)\in \mathbb{N}$ sufficiently large, there exists $\alpha_{i,r}\in \mathcal{A}^1(A_{i,r})$ satisfying $d\alpha_{i,r}=\widetilde{\omega}_{i,-1}|_{A_{i,r}}$ and $|\alpha_{i,r}|_{\widetilde{g}_{i,-1}}\leq Cr$. 
\end{Lemma}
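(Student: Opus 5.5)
The plan is to transplant the primitive $\alpha$ of Lemma \ref{lem:primitiveonlimit} to $M$ via $\psi_i$, and then correct it by an exact term. Since $A(r)\subset A'(r)$ is compactly contained in $X_{\operatorname{reg}}$, the smooth convergence \eqref{eq:smoothconvergence} at time $t=-1$ gives
\begin{equation*}
\eta_i:=(\psi_i^{-1})^{\ast}\omega - \widetilde{\omega}_{i,-1} \to 0
\end{equation*}
in $C^k(A'_{i,r},\widetilde{g}_{i,-1})$ for every $k$. Here we use that the tangent flow is a soliton, so the time $-1$ slice of the limit is $(X,g)$ up to the canonical normalization. Setting $\alpha_i':=(\psi_i^{-1})^{\ast}\alpha$ on $A'_{i,r}$, we have $d\alpha_i'=(\psi_i^{-1})^{\ast}\omega$ and $|\alpha_i'|_{\widetilde{g}_{i,-1}}\leq 2C(\Gamma)r$ for $i$ large. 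It therefore suffices to find a 1-form $\beta_i$ on $A_{i,r}$ with $d\beta_i=\eta_i$ and $|\beta_i|_{\widetilde{g}_{i,-1}}\leq r$ for $i$ large; the desired form is then $\alpha_{i,r}:=\alpha_i'-\beta_i$.

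The key point is that $\eta_i$ is exact with a controlled primitive on a fixed model annulus. Pull back to $A'(r)\cong \mathbb{S}^3/\Gamma\times[\frac r4,2r]$. Since $H^2(\mathbb{S}^3/\Gamma,\mathbb{R})=0$, the closed form $\psi_i^\ast\eta_i$ is exact on $A'(r)$. The explicit homotopy operator for the product with an interval, followed by a de Rham primitive on the rational homology sphere $\mathbb{S}^3/\Gamma$ (say a Hodge-theoretic $d^\ast G$-type primitive with respect to the fixed metric $g$), gives a linear map $P$ from closed 2-forms to 1-forms satisfying $dP\eta=\eta$ and $\|P\eta\|_{C^0(A(r))}\leq C(r,\Gamma)\|\eta\|_{C^1(A'(r))}$. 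The constant may depend on $r$, which is harmless since $r$ is fixed before $i$. Applying this with $\eta=\psi_i^\ast\eta_i$, whose $C^1(g)$-norm tends to $0$, and pushing forward by $\psi_i$ yields $\beta_i$ with $|\beta_i|_{\widetilde g_{i,-1}}\to0$ uniformly on $A_{i,r}$. Comparability of $g$ and $\psi_i^\ast\widetilde g_{i,-1}$ for $i$ large transfers the bounds back and forth.

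The main obstacle is ensuring the bound is uniform with constant $C$ independent of $r$. The $\alpha$ part carries $C(\Gamma)r$ with no $r$-dependence in the constant, and the correction is made smaller than $r$ simply by taking $i=i(r)$ large. Some care is also needed because the time slice is $-1$ rather than $t_i$, i.e.\ $\widetilde{\omega}_{i,-1}$ versus the normalization in \eqref{eq:smoothconvergence}; this is immediate from the $C^\infty_{\operatorname{loc}}$ convergence on $X_{\operatorname{reg}}\times(-\infty,0)$. Restricting from $A'$ to the smaller annulus $A$ accommodates the loss of derivatives in $P$.
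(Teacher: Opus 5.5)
Your proposal is correct and follows the paper's proof. Both transplant the limit primitive $\alpha$ of Lemma \ref{lem:primitiveonlimit} via $\psi_i$ and correct it by a small primitive of the exact difference $\omega-\psi_i^\ast\widetilde{\omega}_{i,-1}$ on $A'(r)$, taking $i=i(r)$ large. The only difference is that the paper gets the bounded primitive of the difference by citing the Hodge-decomposition estimate \cite[Lemma 3.2.1]{SchwarzBook}, whereas you build it by hand from the cylinder homotopy operator and a Hodge primitive on $\mathbb{S}^3/\Gamma$; this works equally well because $r$ is fixed before $i$.
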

\begin{proof}
Because $A'(r)$ deformation retracts to the rational homology sphere $\mathbb{S}^3/\Gamma$, it follows that $H^2(A'(r),\mathbb{R})=0$.
Thus $\psi_i^{\ast}(\widetilde{\omega}_{i,-1}|_{A_{i,r}'})$ is a sequence of $d$-exact forms on $A'(r)$ converging smoothly to $\omega|_{A'(r)}$, so the claim follows from Lemma \ref{lem:primitiveonlimit} and \cite[Lemma 3.2.1]{SchwarzBook} applied to $\psi_i^{\ast}(\widetilde{\omega}_{i,-1}|_{A_{i,r}'})-\omega|_{A'(r)}$. 
\end{proof}

Note that $[\widetilde{\omega}_{i,t}|_{\mathcal{V}}] = (T-t_i)^{-1}[\widetilde{\omega}_{T}|_{\mathcal{V}}] - tc_1(\mathcal{V})$, as inclusion is a linear map on cohomology. Since $[\widetilde{\omega}_T] =\widetilde{\pi}^{\ast}[\omega_Y]$ for some K\"ahler metric $\omega_Y$ on $Y$, and any such metric satisfies $[\omega_Y|_{\widetilde{\pi}(\mathcal{V}) }] = 0$ since $\widetilde{\pi}(\mathcal{V})\cong \mathbb{D}$, we obtain $[\widetilde{\omega}_{i,-1}|_{\mathcal{V}}]=c_1(\mathcal{V})$. 

Let $j:H^2(\mathcal{B}_{i,r},\partial \mathcal{B}_{i,r},\mathbb{R})\to H^2(\mathcal{B}_{i,r},\mathbb{R})$ be the natural map, and let $$[\mathcal{B}_{i,r},\partial \mathcal{B}_{i,r}] \in H_4(\mathcal{B}_{i,r},\partial \mathcal{B}_{i,r},\mathbb{R})$$ be the fundamental class. For $\alpha,\beta \in H^2(\mathcal{B}_{i,r},\mathbb{R})$ define
\begin{equation*}
    q(\alpha,\beta):= \langle j^{-1}(\alpha)\cup \beta, [\mathcal{B}_{i,r},\partial \mathcal{B}_{i,r}]\rangle.
\end{equation*}

\begin{Lemma} \label{lem:boundaryintegral} With respect to the de Rham isomorphism, 
\begin{equation*}
q([\eta],[\eta]):=\int_{\mathcal{B}_{i,r}}\eta \wedge \eta - \int_{\Sigma_{i,r}} \theta \wedge \eta, 
\end{equation*}
for any closed 2-form $\eta \in \mathcal{A}^2(\mathcal{B}_{i,r})$ and $\theta \in \mathcal{A}^1(A_{i,r})$ such that $\eta|_{A_{i,r}}=d\theta$. Moreover, $q([\eta],[\eta])\leq 0$, with equality if and only if $\eta$ is $d$-exact.
\end{Lemma}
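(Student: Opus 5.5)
The plan has three steps: build a compactly supported representative of $j^{-1}[\eta]$ by cutting $\theta$ off near the boundary, read off $q$ from it, and then get the sign from Lemma \ref{lem:intersectionnegative}.

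\textbf{The formula.} Fix a smooth cutoff $\chi$ on $\mathcal{B}_{i,r}$ with $\chi\equiv 1$ near $\Sigma_{i,r}=\partial\mathcal{B}_{i,r}$ and $\operatorname{supp}\chi\cap\mathcal{B}_{i,r}\subseteq A_{i,r}$. This is possible because $\mathcal{B}_{i,r}\cap A_{i,r}$ is a collar of $\Sigma_{i,r}$ in $\mathcal{B}_{i,r}$. Set $\tilde\eta:=\eta-d(\chi\theta)$. It is closed and vanishes near $\partial\mathcal{B}_{i,r}$, so it defines a class in $H^2(\mathcal{B}_{i,r},\partial\mathcal{B}_{i,r},\mathbb{R})\cong H^2_c(\operatorname{int}\mathcal{B}_{i,r})$. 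Its image under $j$ is $[\eta]$. Since $H^1(\Sigma_{i,r},\mathbb{R})=0$, the long exact sequence of the pair shows that $j$ is injective. Together with the nondegeneracy of $q$ from Lemma \ref{lem:intersectionnegative}, this makes $j$ an isomorphism, so $q$ is well defined.

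We then compute
\begin{equation*}
q([\eta],[\eta])=\int_{\mathcal{B}_{i,r}}\tilde\eta\wedge\eta=\int_{\mathcal{B}_{i,r}}\eta\wedge\eta-\int_{\mathcal{B}_{i,r}}d(\chi\theta)\wedge\eta .
\end{equation*}
Because $\eta$ is closed, $d(\chi\theta)\wedge\eta=d(\chi\theta\wedge\eta)$. Stokes' theorem then gives $\int_{\Sigma_{i,r}}\chi\theta\wedge\eta=\int_{\Sigma_{i,r}}\theta\wedge\eta$, with $\Sigma_{i,r}$ carrying the boundary orientation. The answer does not depend on $\chi$ or $\theta$. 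Indeed, if $\theta'$ is another primitive, then $\theta-\theta'$ is closed on $A_{i,r}$. Its integral against $\eta=d\theta$ over the closed manifold $\Sigma_{i,r}$ is $\int_{\Sigma_{i,r}} d\big((\theta-\theta')\wedge\theta\big)$ up to sign, which vanishes.

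\textbf{The sign.} The form $q$ is the intersection form of $\mathcal{B}_{i,r}$ transported to $H^2$. By Poincar\'e--Lefschetz duality, $H^2(\mathcal{B}_{i,r},\partial\mathcal{B}_{i,r},\mathbb{R})\cong H_2(\mathcal{B}_{i,r},\mathbb{R})$. Under this identification, $q(j\,a, j\,b)$ equals the homological intersection pairing of the duals of $a$ and $b$. Lemma \ref{lem:intersectionnegative} says this pairing is negative definite, so $q([\eta],[\eta])\le 0$, with equality if and only if $[\eta]=0$ in $H^2(\mathcal{B}_{i,r},\mathbb{R})$. By de Rham's theorem on the manifold with boundary, $[\eta]=0$ is the same as $\eta$ being $d$-exact.

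The step that needs the most care is this last identification, specifically the bookkeeping of orientations and signs. Two things must be checked: that the boundary orientation of $\Sigma_{i,r}$ induced from $\mathcal{B}_{i,r}$ matches the sign convention in the displayed formula, and that $j$ is invertible so $q$ makes sense. For the second point I would use the exact sequence $H^1(\Sigma_{i,r})\to H^2(\mathcal{B},\partial\mathcal{B})\to H^2(\mathcal{B})\to H^2(\Sigma_{i,r})$. Both outer terms vanish over $\mathbb{R}$ because $\Sigma_{i,r}\cong\mathbb{S}^3/\Gamma$ is a rational homology sphere, so $j$ is an isomorphism.
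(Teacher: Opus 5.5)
Your proposal is correct and follows essentially the same route as the paper. The paper also inverts $j$ via $[\eta]\mapsto[\eta-d(\chi\theta)]$ with a cutoff supported in the collar $A_{i,r}$, uses Stokes to produce the boundary term, and gets the sign and the equality case from the negative definiteness in Lemma \ref{lem:intersectionnegative}. The only cosmetic difference is that the paper evaluates $\int_{\mathcal{B}_{i,r}}(\eta-d(\chi\theta))\wedge(\eta-d(\chi\theta))$ instead of your $\int_{\mathcal{B}_{i,r}}(\eta-d(\chi\theta))\wedge\eta$; these agree by one more application of Stokes, since $\eta-d(\chi\theta)$ vanishes near $\Sigma_{i,r}$.
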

\begin{proof} 

With respect to the de Rham isomorphisms, the inverse of the natural map $j:H^2(\mathcal{B}_{i,r},\partial \mathcal{B}_{i,r},\mathbb{R})\to H^2(\mathcal{B}_{i,r},\mathbb{R})$ (which is an isomorphism since $\Sigma_{i,r}$ is a rational homology sphere) is given by $[\eta]\mapsto [\eta-d(\chi \theta)]$, where $\chi \in C^{\infty}(A_{i,r})$ is a cutoff function equal to 1 in a neighborhood of $\Sigma_{i,r}$, and which vanishes in a neighborhood of $\psi_i(\partial B_g(x,\frac{r}{2}))$. Here, we extend $\chi \theta$ to $\mathcal{B}_{i,r}$ by zero away from $\operatorname{supp}(\chi)$. Under the Poincar\'e-Lefschetz isomorphism $H^2(\mathcal{B}_{i,r},\partial \mathcal{B}_{i,r},\mathbb{R}) \cong H_2(\mathcal{B}_{i,r},\mathbb{R})$, the intersection pairing on $H_2(\mathcal{B}_{i,r},\mathbb{R})$ corresponds to the bilinear form $(\beta_1,\beta_2)\mapsto \int_{\mathcal{B}_{i,r}}\beta_1\wedge \beta_2$, and from this we can readily conclude the desired formula. By Lemma \ref{lem:intersectionnegative} and Stokes' theorem, we thus have
\begin{equation*}
    0 \geq \int_{\mathcal{B}_{i,r}} (\eta -d(\chi \theta)) \wedge (\eta-d(\chi \theta))=\int_{\mathcal{B}_{i,r}}\eta \wedge \eta - \int_{\Sigma_{i,r}} \theta \wedge \eta,
\end{equation*}
with equality if and only if $\eta-d(\chi \theta)$ is $d$-exact, or equivalently if $\eta$ is $d$-exact.
\end{proof}

By taking $\eta:=\widetilde{\omega}_{i,-1}|_{\mathcal{B}_{i,r}}$ and $\theta:=\alpha_{i,r}$ in Lemma \ref{lem:boundaryintegral}, we have 
\begin{equation*}
    0<\int_{\mathcal{B}_{i,r}}\widetilde{\omega}_{i,-1}^2 = q(c_1(\mathcal{B}_{i,r}),c_1(\mathcal{B}_{i,r}))+\int_{\Sigma_{i,r}} \alpha_{i,r}\wedge \widetilde{\omega}_{i,-1}.
\end{equation*}
On the other hand, Lemma \ref{lem:primitivealongsequence} implies
\begin{equation*}
\left| \int_{\partial \mathcal{B}_{i,r}} \alpha_{i,r}\wedge \widetilde{\omega}_{i,-1} \right| \leq Cr\mathcal{H}_{\widetilde{g}_{i,-1}}^3(\Sigma_{i,r}) \leq Cr^4,
\end{equation*}
hence combining expressions yields
\begin{equation*}
    -Cr^4 < q(c_1(\mathcal{B}_{i,r}),c_1(\mathcal{B}_{i,r})) \leq 0.
\end{equation*}
On the other hand, we may view $\frac{1}{2\pi}c_1(\mathcal{B}_{i,r})$ as a class in $H^2(\mathcal{B}_{i,r},\mathbb{Z})$, so that (by the long exact sequence of the pair) $\frac{|\Gamma|}{2\pi} c_1(\mathcal{B}_{i,r})$ has a representative in $H^2(\mathcal{B}_{i,r},\partial \mathcal{B}_{i,r},\mathbb{Z})$. The self-intersection of this representative is then
\begin{equation*}
    \frac{|\Gamma|^2}{(2\pi)^2} q(c_1(\mathcal{B}_{i,r}),c_1(\mathcal{B}_{i,r}))=q \left(\frac{|\Gamma|}{2\pi} c_1(\mathcal{B}_{i,r}),\frac{|\Gamma|}{2\pi}c_1(\mathcal{B}_{i,r})\right) \in \mathbb{Z}.
\end{equation*}
Choosing $r=r(\Gamma)>0$ sufficiently small, this implies $q(c_1(\mathcal{B}_{i,r}),c_1(\mathcal{B}_{i,r}))=0$, so by Lemma \ref{lem:boundaryintegral},
\begin{equation} \label{eq:c1vanishing}
[\widetilde{\omega}_{i,-1}|_{\mathcal{B}_{i,r}}]=c_1(\mathcal{B}_{i,r})=0 \qquad \text{ in } H^2(\mathcal{B}_{i,r},\mathbb{R}).
\end{equation} 
The adjunction formula \cite[Theorem 13.3.17(ii)]{McDuffSalamon} then rules out the existence of any symplectic $(-1)$-sphere, so that $(\mathcal{B}_{i,r},\widetilde{\omega}_{i,-1}|_{\mathcal{B}_{i,r}})$ is a minimal symplectic filling of $\Sigma_{i,r}$, where $\Sigma_{i,r}$ is contactomorphic to the standard contact structure on $\partial B_g(x,r)\cong \mathbb{S}^3/\Gamma$. 

\begin{Defn}
    A two-dimensional quotient singularity is said to be of class $T$, if it is a rational double point or a cyclic singularity of the form
    \begin{equation} \label{eq:TypeTsingularity}
        \frac{1}{dn^2}(1,dna-1), \qquad d\geq1,\quad n\geq 2, \quad \gcd(a,n)=1.
    \end{equation}
We say the singularity is a Wahl singularity if in addition $d=1$. 
\end{Defn}

\begin{Lemma} \label{lem:singclassT} If $B$ is a minimal symplectic filling of $\mathbb{S}^3/\Gamma$ with the standard contact structure such that $c_1(B)=0$ in $H^2(B,\mathbb{R})$, then $\mathbb{C}^2/\Gamma$ is a class $T$ singularity, and $B$ is symplectic deformation equivalent to a Milnor fiber of a $\mathbb{Q}$-Gorenstein smoothing of $\mathbb{C}^2/\Gamma$. 
\end{Lemma}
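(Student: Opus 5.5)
The plan is to reduce the statement to the known classification of minimal symplectic fillings of spherical space forms with their standard (Milnor fillable) contact structure, and then to read off from that classification which fillings have vanishing real first Chern class.

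\textbf{Step 1: the group $\Gamma$.} Since $\Gamma\le U(2)$ acts freely on $\mathbb{S}^3$, $\mathbb{C}^2/\Gamma$ is a quotient surface singularity, and the standard contact structure on $\mathbb{S}^3/\Gamma$ is its Milnor fillable link.

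\textbf{Step 2: reduction to smoothings.} By the work of Ohta--Ono (simple singularities), Lisca (cyclic quotients), Bhupal--Ono (all quotient singularities) and Park--Park--Shin--Urz\'ua, every minimal symplectic filling of such a link is symplectic deformation equivalent either to the minimal resolution or to the Milnor fiber of a smoothing of $\mathbb{C}^2/\Gamma$. We treat the two cases separately.

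\textbf{Step 3: the resolution case.} Suppose $B$ is deformation equivalent to the minimal resolution. Its second homology is spanned by the exceptional curves $C_k$, with self-intersections $-b_k\le -2$. Since $c_1(B)=0$ in $H^2(B,\mathbb{R})$, the adjunction formula gives
\begin{equation*}
K\cdot C_k = b_k-2=0
\end{equation*}
for every $k$. Hence every exceptional curve is a $(-2)$-curve, and the singularity is a rational double point. This falls under class $T$. The resolution is itself the Milnor fiber of the (trivially $\mathbb{Q}$-Gorenstein) simultaneous-resolution smoothing of an ADE singularity, so the second conclusion also holds in this case.

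\textbf{Step 4: the smoothing case.} Suppose $B$ is the Milnor fiber of a smoothing over a component of the versal deformation space. We must show that $c_1(B)=0$ rationally forces the smoothing to be $\mathbb{Q}$-Gorenstein. The idea is to use that the canonical class $K_B$ is the restriction of the relative canonical sheaf of the smoothing family. This class is $\mathbb{Q}$-Cartier exactly on the $\mathbb{Q}$-Gorenstein components; we would use the Looijenga--Wahl / Wahl criterion, under which this corresponds to torsion $c_1$. One can also argue numerically: Wahl's formula expresses $\mu$ and the signature of $B$ through $K^2+\rho$, and the relation $c_1^2(B)=0$ pins down the smoothing component. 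Koll\'ar--Shepherd-Barron then say that quotient singularities admitting a $\mathbb{Q}$-Gorenstein smoothing are exactly those of class $T$, namely rational double points or $\frac{1}{dn^2}(1,dna-1)$. Their Milnor fibers are the ones with $H^2(B,\mathbb{R})$ carrying $c_1=0$, because $K$ is torsion of order $n$.

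\textbf{Main obstacle.} The hardest step is the converse direction inside Step 4: showing that a non-$\mathbb{Q}$-Gorenstein smoothing component always has $c_1(B)\neq0$ in $H^2(B,\mathbb{R})$. My first attempt would be to argue directly on the symplectic side, using the plumbing and rational blowdown description of fillings of lens spaces (Lisca) and of the other quotients (Bhupal--Ono), which realizes each filling as a blowdown of a concave cap. Each filling $B$ embeds in a rational surface $Z = B\cup N$, where $N$ is a fixed compactifying divisor neighborhood. In $Z$, $c_1(B)=0$ means that $K_Z$ is supported on $N$ rationally. One then checks combinatorially that this happens exactly when the plumbing string being blown down is a Wahl/class $T$ chain, i.e.\ when the filling arises from a $\mathbb{Q}$-Gorenstein smoothing. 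Failing that, I would cite the known statement (e.g.\ Fowler, or Park--Park--Shin--Urz\'ua) that Milnor fibers with torsion canonical class are precisely those of $\mathbb{Q}$-Gorenstein smoothings.
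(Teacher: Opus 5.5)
Your Steps 1--3 are fine, but Step 4 carries the entire content of the lemma, and it is not proved. You need the implication: if $B$ is the Milnor fiber of a smoothing of $\mathbb{C}^2/\Gamma$ and $c_1(B)=0$ in $H^2(B,\mathbb{R})$, then that smoothing is $\mathbb{Q}$-Gorenstein. By Koll\'ar--Shepherd-Barron, $\mathbb{C}^2/\Gamma$ is then of class $T$. For this you offer only pointers:
an unstated ``Looijenga--Wahl / Wahl criterion'';
a numerical argument via Wahl's formula that is never carried out;
a combinatorial check through the Lisca and Bhupal--Ono classifications that is only described;
and finally an appeal to an unspecified ``known statement''.
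Your closing sentence asserts that Milnor fibers of $\mathbb{Q}$-Gorenstein smoothings are exactly those with $c_1=0$. The reason you give ($K$ torsion of order $n$) only proves the easy direction, $\mathbb{Q}$-Gorenstein $\Rightarrow$ $c_1$ torsion, which is not what the lemma needs. So, as you suspect, the main obstacle is left open.

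The paper closes it with the Koll\'ar--Shepherd-Barron correspondence between smoothing components and P-resolutions (their 3.9). This says $B$ is the Milnor fiber of a $\mathbb{Q}$-Gorenstein smoothing of some partial resolution $\rho:Y\to\mathbb{C}^2/\Gamma$, where $Y$ has only class $T$ singularities and $K_Y$ is $\rho$-ample.
\begin{enumerate}
\item The complement $U$ of small balls around the singular points of $Y$ embeds in $B$ with homotopic complex tangent bundles, so $c_1(K_Y)|_U=0$.
\item The links of these singularities are rational homology spheres, so Mayer--Vietoris gives $H^2(Y,\mathbb{R})\cong H^2(U,\mathbb{R})$, hence $c_1(K_Y)=0$ in $H^2(Y,\mathbb{R})$.
\item Any $\rho$-exceptional curve $D$ would then give $0<K_Y\cdot D=0$. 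So $\rho$ is an isomorphism, $\mathbb{C}^2/\Gamma$ is itself of class $T$, and the smoothing is a $\mathbb{Q}$-Gorenstein smoothing of it.
\end{enumerate}
This treats all smoothing components uniformly, since the minimal resolution is the Milnor fiber of the Artin component. Your separate Step 3, while correct, is therefore not needed.

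Your first idea, via the relative canonical class of the total space $\mathcal{X}$, can also be completed, but only with an argument you do not supply. Using $b_1(B)=0$ (Greuel--Steenbrink) and the Wang and Mayer--Vietoris sequences, one shows that $H^2$ of the link of $\mathcal{X}$ injects rationally into $H^2(B)$. Rationality of $\mathcal{X}$ (Elkik) gives $\operatorname{Pic}(\mathcal{X}\setminus\{0\})\hookrightarrow H^2(\mathcal{X}\setminus\{0\},\mathbb{Z})$. Together these show that $c_1(B)=0$ forces $K_{\mathcal{X}}$ to be $\mathbb{Q}$-Cartier, after which your appeal to Koll\'ar--Shepherd-Barron finishes the proof.
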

\begin{proof} By \cite[Proposition 2.12]{CPS}, $B$ is symplectic deformation equivalent to a Milnor fiber of $\mathbb{C}^2/\Gamma$. By \cite[3.9]{KollarSB}, this is a $\mathbb{Q}$-Gorenstein smoothing of $Y$, where $\rho:Y\to \mathbb{C}^2/\Gamma$ is a partial resolution such that $Y$ has only class $T$ singularities and $K_Y$ is $\rho$-ample. Letting $U$ denote the complement in $Y$ of small (disjoint) balls around the singular points, it follows that $U$ embeds in $B$, and the complex tangent bundles are homotopic, yielding 
\begin{equation} \label{eq:singclassT}
    c_1(K_Y)|_U=-c_1(B)|_U=0 \qquad \text{ in }  H^2(U,\mathbb{R}).
\end{equation}
Because the links of all singularities of $Y$ are $\mathbb{Q}$-homology spheres, the Mayer-Vietoris theorem implies that the map
\begin{equation*}
    H^2(Y,\mathbb{R})\to H^2(U,\mathbb{R})
\end{equation*}
induced by the inclusion $U\hookrightarrow Y$ is an isomorphism. By \eqref{eq:singclassT}, the image of $c_1(K_Y)$ under this map vanishes, hence $c_1(K_Y)=0$ in $H^2(Y,\mathbb{R})$. If there exists a curve $D \subseteq Y$ which is contracted by $\rho$, then because $K_Y$ is $\rho$-ample, we have 
\begin{equation*}
    0<K_Y \cdot D=\int_D c_1(K_Y)=0,
\end{equation*}
a contradiction. It follows that $\rho$ is an isomorphism, so that $\mathbb{C}^2/\Gamma$ is itself of class $T$. 
\end{proof}

\begin{Proposition} \label{prop:Wahl} Every singularity of $X$ is a Wahl singularity.
\end{Proposition}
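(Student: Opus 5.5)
The plan is to combine the minimal symplectic filling constructed just before the statement with Lemma \ref{lem:singclassT}, and then to use the bound $b_2(\mathcal{B}_{i,r})\leq 1$ from Lemma \ref{lem:intersectionnegative} to force $d=1$. Fix an orbifold point $x\in X$ with isotropy group $\Gamma$. Choose $r=r(\Gamma)>0$ small and $i=i(r)$ large, so that \eqref{eq:c1vanishing} holds. Then $(\mathcal{B}_{i,r},\widetilde{\omega}_{i,-1}|_{\mathcal{B}_{i,r}})$ is a minimal symplectic filling of $\mathbb{S}^3/\Gamma$ with its standard contact structure, and $c_1(\mathcal{B}_{i,r})=0$ in $H^2(\mathcal{B}_{i,r},\mathbb{R})$. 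Lemma \ref{lem:singclassT} then gives two facts. First, $\mathbb{C}^2/\Gamma$ is of class $T$. Second, $\mathcal{B}_{i,r}$ is symplectic deformation equivalent, and in particular diffeomorphic, to the Milnor fiber $F$ of a $\mathbb{Q}$-Gorenstein smoothing of $\mathbb{C}^2/\Gamma$.

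The next step is to exclude the rational double points. These are also of class $T$, but their isotropy groups lie in $SU(2)$. For them the Milnor fiber of any smoothing has a negative definite intersection form of ADE type, so it contains Lagrangian, hence smooth, spheres of self-intersection $-2$. The argument from the proof of Proposition \ref{prop:tangentflowregularfiber} then applies. Alternatively, and more directly, one could compare Betti numbers: the $A_{n-1}$ Milnor fiber has $b_2=n-1$, and the Milnor fibers of types $D$ and $E$ have $b_2\geq 4$. Combined with $b_2(\mathcal{B}_{i,r})\leq 1$ from Lemma \ref{lem:intersectionnegative}, this leaves only $A_1$. To exclude $A_1$, use the integrality argument that precedes the statement. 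For $A_1$ the intersection form on $H_2(\mathcal{B}_{i,r},\mathbb{Z})$ is $(-2)$, and the vanishing class $\sigma$ satisfies $\sigma\cdot\sigma=-2\neq 0$. We would also need $\sigma$ to have nonzero image in $H_2(\mathcal{V})$, so that the self-intersection is not forced to vanish. Lemma \ref{lem:intersectionnegative} gives the injectivity $\iota_\ast$ on $H_2(\mathcal{B}_{i,r},\mathbb{R})$, and the intersection form on $H_2(\mathcal{V},\mathbb{R})\cong H_2(M_{\operatorname{BCCD}},\mathbb{R})$ represents only values of the form $-k^2$ on integral classes. A class with self-intersection $-2$ therefore cannot exist, and this is the contradiction.

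Finally, suppose $\Gamma$ is cyclic of type $\frac{1}{dn^2}(1,dna-1)$. By the standard computation of Wahl and Looijenga--Wahl, the Milnor fiber of its $\mathbb{Q}$-Gorenstein smoothing has $b_2(F)=d-1$ and $\pi_1(F)\cong \mathbb{Z}/n$. The bound $b_2\leq 1$ only gives $d\leq 2$, so the main obstacle is ruling out $d=2$. For this I would use the embedding of $\mathcal{B}_{i,r}$ into $\mathcal{V}$. By Lemma \ref{lem:intersectionnegative}, $H_2(\mathcal{B}_{i,r},\mathbb{R})$ would inject into $H_2(\mathcal{V},\mathbb{R})$. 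The intersection lattice of $F$ for $d=2$ is $(-4)$ up to torsion considerations, and again $-4$ is not of the form $-k^2$ times a value realized compatibly with \eqref{eq:c1vanishing}. A cleaner route is to use $c_1(\mathcal{V})$. In $\mathcal{V}\cong\mathcal{V}_{\operatorname{BCCD}}$, the generator $e$ of the negative part satisfies $c_1\cdot e=\pm1\neq 0$. An injected class $\sigma\in H_2(\mathcal{B}_{i,r},\mathbb{R})$ would equal a nonzero multiple of $e$ modulo the null direction, and pairing with $c_1$ gives $\langle c_1(\mathcal{V}),\iota_\ast\sigma\rangle=\langle c_1(\mathcal{B}_{i,r}),\sigma\rangle=0$ by \eqref{eq:c1vanishing}. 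Since the null fiber class $f$ satisfies $c_1\cdot f=2$ and $\iota_\ast\sigma=ae+bf$ with $a\neq0$ has $\iota_\ast\sigma\cdot\iota_\ast\sigma=-a^2$ and $c_1$-pairing $\pm a+2b=0$, this forces a relation that I would check against $\sigma\cdot\sigma=-4\cdot(\text{unit})$. If that does not give a contradiction, the fallback is the same $c_1=0$ argument combined with the rational-genus/Euler characteristic count $\chi(\mathcal{B}_{i,r})=d$ versus constraints from $\mathcal{V}\setminus\mathcal{B}_{i,r}$. Either way, the conclusion is $b_2=0$, i.e. $d=1$, which is exactly the Wahl condition.
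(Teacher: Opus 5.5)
Your first two steps agree with the paper: class $T$ via Lemma \ref{lem:singclassT}, and the exclusion of rational double points. There, $b_2(\mathcal{B}_{i,r})\le 1$ leaves only $A_1$, whose $(-2)$-class cannot lie in $H_2(\mathcal{V},\mathbb{Z})$, since there $(ae+bf)^2=-a^2$. Your first suggestion for this step, reusing the argument of Proposition \ref{prop:tangentflowregularfiber}, does not transfer as stated, because that contradiction came from the intersection form of $\mathcal{V}$ being zero; your Betti-number argument is fine.

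The genuine gap is the cyclic case with $d=2$, which you leave open. It looks hard only because your input there is wrong. The Milnor fiber $F$ of the $\mathbb{Q}$-Gorenstein smoothing of $\frac{1}{dn^2}(1,dna-1)$ has $H_2(F,\mathbb{Z})$ modulo torsion equal to the negative definite $A_{d-1}$ lattice. So for $d=2$ it is $(-2)$, not $(-4)$. Concretely, $F=\widetilde F/\mu_n$, where $\widetilde F=\{xy=z^{dn}+t\}$ and $\mu_n$ acts freely by $(\zeta x,\zeta^{-1}y,\zeta^{a}z)$. This action shifts the $dn$ roots of $z^{dn}+t$ by $da$ places. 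Hence for $d\ge 2$ the vanishing sphere lying over the chord joining two adjacent roots is disjoint from all of its $\mu_n$-translates. It therefore descends to an embedded sphere of self-intersection $-2$ in $F$. This is exactly the fact the paper takes from \cite[Section 2.2.2]{BeCh}. With it, every $d\ge 2$ is excluded by the same argument you used for $A_1$, without even needing $b_2\le 1$.

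Neither of your proposed substitutes closes the case as written. With your lattice $(-4)$, the $c_1$-pairing gives no contradiction. The condition $\langle c_1(\mathcal{V}),ae+bf\rangle=a+2b=0$ forces $\iota_\ast\sigma=b(f-2e)$, of square $-4b^2$. Indeed $f-2e\in H_2(\mathcal{V},\mathbb{Z})$ itself has square $-4$ and pairs trivially with $c_1(\mathcal{V})$. So the homological data of $\mathcal{V}$ together with \eqref{eq:c1vanishing} cannot rule out a $(-4)$-lattice. The Euler characteristic comparison with $\mathcal{V}\setminus\mathcal{B}_{i,r}$ is not actually formulated. As it stands, the proposal does not exclude $d=2$. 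The missing ingredient is the existence of an integral class of self-intersection $-2$ in the Milnor fiber whenever $d\ge 2$.
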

\begin{proof}
By \eqref{eq:c1vanishing} and Lemma \ref{lem:singclassT} any singularity of $X$ is type $T$, and the corresponding minimal symplectic filling $\mathcal{B}_{i,r}$ is the corresponding Milnor fiber. If the singularity is a rational double point, then Lemma \ref{lem:intersectionnegative} gives $\Gamma = \{\pm 1\}$, so that $\mathcal{B}_{i,r}$ contains an integral homology class of self-intersection $-2$, contradicting the fact that no element of $H_2(\mathcal{V},\mathbb{Z})$ has self-intersection $-2$. Thus $\mathbb{C}^2/\Gamma$ is a cyclic singularity of the form \eqref{eq:TypeTsingularity}. By \cite[Section 2.2.2]{BeCh}, if $d>1$, then $\mathcal{B}_{i,r}$ contains an integral homology class of self-intersection $-2$ again, yielding a contradiction. Therefore $d=1$, so the claim follows.
\end{proof}

\subsection{$X$ is smooth}

\label{subsection:tangentflowisBCCD}

In this subsection, we complete the proof that any tangent flow of $(M,(\widetilde{g}_t)_{t\in [0,T)})$ is smooth. Given the previous results of this section, the proof is similar to that of \cite[Theorem 5.12]{CHM}.

\begin{Proposition} \label{prop:tangentflowsingularfiber}
    $X$ is the cylinder $\mathbb{C}\times \mathbb{P}^1$ or the BCCD shrinker.
\end{Proposition}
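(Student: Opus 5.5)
The plan is to show first that $X$ has no orbifold points, and then to invoke the classification of smooth K\"ahler-Ricci shrinkers of complex dimension two. As the outline in the introduction indicates, the first step compares two computations of the trace of the intersection matrix of the central fiber of $\pi\circ\rho:\widetilde{X}\to\mathbb{C}$.

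\emph{First computation, via blowups.} By Lemma \ref{lem:relativeMM}, $\rho_{\min}$ factors as a sequence of $k\geq 0$ blowups of $\mathbb{C}\times\mathbb{P}^1$ at points lying over $0$. The central fiber of $\widetilde{X}$ therefore has exactly $k+1$ irreducible components. Its support is a tree of smooth rational curves with simple normal crossings, since this property is preserved when a point of the fiber is blown up. Write $\operatorname{tr}$ for the sum of the self-intersections of these components; it starts at $0$ for $\{0\}\times\mathbb{P}^1$. A blowup centered at a smooth point of the fiber adds a new $(-1)$-curve and lowers one self-intersection by $1$, so $\operatorname{tr}$ changes by $-2$. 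A blowup centered at a node lowers two self-intersections, so $\operatorname{tr}$ changes by $-3$. Hence
\begin{equation*}
-3k\leq \operatorname{tr}\leq -2k .
\end{equation*}

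\emph{Second computation, via the singularities of $X$.} Let $p_1,\dots,p_P$ be the orbifold points of $X$. By Proposition \ref{prop:Wahl}, each $p_l$ is a Wahl singularity $\frac{1}{n^2}(1,na-1)$. Its minimal resolution is a Hirzebruch-Jung chain of $r_l$ rational curves with self-intersections $-b_1,\dots,-b_{r_l}$. I will use the standard fact, proved by induction from the chains $[4]$ via $[b_1,\dots,b_r]\mapsto[2,b_1,\dots,b_r+1]$ and $[b_1+1,\dots,b_r,2]$, that
\begin{equation*}
\sum_{m=1}^{r_l} b_m = 3r_l+2 .
\end{equation*}
The components of the central fiber of $\widetilde{X}$ are the proper transforms $\widetilde{E}_j$ with $j\leq Q$, together with these exceptional chains. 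This gives
\begin{equation*}
k+1=Q+\sum_l r_l,\qquad \operatorname{tr}=\sum_{j=1}^Q \widetilde{E}_j^2-3\sum_l r_l-2P .
\end{equation*}

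\emph{Combining the two.} By Proposition \ref{prop:components}, $Q\leq 2$, and Lemma \ref{lem:curvenumbers}\eqref{lem:curvenumbers2} gives the self-intersections of the $\widetilde{E}_j$.
\begin{itemize}
\item If $Q=2$, both $\widetilde{E}_j$ are $(-1)$-curves. The inequality $\operatorname{tr}\geq -3k=-3(1+\sum_l r_l)$ becomes $-2-2P\geq -3$, which forces $P=0$.
\item If $Q=1$, then $\widetilde{E}_1^2\in\{0,-1\}$. The inequality $\operatorname{tr}\geq -3k=-3\sum_l r_l$ becomes $\widetilde{E}_1^2-2P\geq 0$, which forces $P=0$.
\end{itemize}
In either case $X=\widetilde{X}$ is smooth. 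The main obstacle I expect is bookkeeping: the lower bound $-3$ per blowup needs the simple normal crossing structure, so that at most two components pass through any blown-up point. I also need to check that the $\widetilde{E}_j$ and the exceptional curves exhaust the components without overlap. This holds because $\rho$ is an isomorphism away from $X_{\sing}$ and every singular point lies on the central fiber.

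\emph{Classification.} Since $X$ is smooth, \cite{CCD,BCCD,LiWang} show that $X$ is biholomorphic, compatibly with the soliton structure, to the Gaussian $\mathbb{C}^2$, the FIK shrinker, $\mathbb{C}\times\mathbb{P}^1$, or the BCCD shrinker. Both the Gaussian and the FIK shrinker are excluded because they admit no fibration $\pi:X\to\mathbb{C}$ with general fiber $\mathbb{P}^1$. Indeed, they contain no compact curve of self-intersection zero, contradicting Proposition \ref{prop:curvaturenottozero}\eqref{prop:curvaturenottozero2}. Alternatively, their associated Fano fibration has two-dimensional base, which was ruled out in the proof that $\mathcal{C}\cong\mathbb{C}$. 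This leaves $\mathbb{C}\times\mathbb{P}^1$ and the BCCD shrinker, which correspond to $Q=1$ and $Q=2$ respectively.
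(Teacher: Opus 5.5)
You follow the paper's strategy: compute the trace of the central-fiber intersection matrix twice, once via the blowup factorization of Lemma \ref{lem:relativeMM} and once via the Wahl chains, then apply the classification. However, your ``standard fact'' is off by one, and this breaks the case $Q=2$. For a Wahl chain $[b_1,\dots,b_r]$ one has $\sum_m b_m=3r+1$, not $3r+2$. The base case $[4]$ has $r=1$ and sum $4$, and each of your two inductive moves raises the sum by $3$ and the length by $1$; for instance $[2,5]$ gives $7=3\cdot 2+1$. With the correct identity, your second computation reads $\operatorname{tr}=\sum_j\widetilde{E}_j^2-3\sum_l r_l-P$. For $Q=1$ your inequality still gives $\widetilde{E}_1^2-P\geq 0$, hence $P=0$. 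For $Q=2$, however, the bound $\operatorname{tr}\geq -3k$ only gives $-2-P\geq -3$, i.e.\ $P\leq 1$. So a configuration with a single Wahl point is not excluded, and the step ``forces $P=0$'' fails as written.

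The missing ingredient is the one the paper uses. The first blowup $X_2\to X_1=\mathbb{C}\times\mathbb{P}^1$ is centered on the smooth irreducible fiber $\{0\}\times\mathbb{P}^1$, where there is no node yet, so it lowers the trace by exactly $2$. Hence $\operatorname{tr}\geq -3(k-1)-2=-3k+1$ whenever $k\geq 1$. This holds when $Q=2$, since the central fiber of $\widetilde{X}$ then has at least two components. The sharper bound turns your $Q=2$ inequality into $-2-P\geq -2$, forcing $P=0$. The paper phrases this as $\theta\geq -3N+4$ against $\theta=-3N+4-k$, where its $k$ is the number of Wahl points. With this correction the rest of your argument goes through, including the bookkeeping of components and the exclusion of the Gaussian and FIK shrinkers via Proposition \ref{prop:curvaturenottozero}\eqref{prop:curvaturenottozero2}.
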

\begin{proof}
We first show $X$ is smooth. Suppose this is not the case, so that there are $k\geq 1$ Wahl singularities of $X$ by Proposition \ref{prop:Wahl}. Letting $F_1,...,F_N$ denote the irreducible components of the central fiber $(\pi \circ \rho)^{-1}(0) \subseteq \widetilde{X}$, it follows that $N\geq 2$. By Lemma \ref{lem:relativeMM}, the relative minimal model of $\pi \circ \rho$ is the projection map $\mathbb{C}\times \mathbb{P}^1 \to \mathbb{C}$, hence $\pi \circ \rho$ factors as a composition 
\begin{equation*}
\widetilde{X}=X_{N} \to X_{N-1} \to \cdots \to X_1 := \mathbb{C}\times \mathbb{P}^1 \to \mathbb{C},
\end{equation*}
where each map $X_j \to X_{j-1}$ is the blowup at a $X_{j-1}$ at a point on the central fiber of $X_{j-1}\to \mathbb{C}$. Each such blowup decreases the trace of the intersection matrix corresponding to the irreducible components of the central fiber by at most $3$, and the blowup $X_2 \to X_1$ decreases the trace by exactly $2$. It follows that the trace $\theta$ of $(F_i \cdot F_j)_{i,j=1}^N$ satisfies 
\begin{equation} \label{eq:trace1}
    \theta \geq -3(N-2)-2=-3N+4.
\end{equation}
On the other hand, the $F_i$ consist of exactly the proper transforms of the $Q$ irreducible components of $\pi^{-1}(0)$ along with the chains of $\mathbb{P}^1$ arising as the minimal resolutions of the Wahl singularities of $X$ (by Proposition \ref{prop:Wahl}). Let $n_i$ be the number of irreducible components of the exceptional set of the minimal resolution of the $i$th Wahl singularity. First suppose $Q=2$. The argument of \cite[Proof of Theorem 5.12]{CHM} then implies 
\begin{equation} \label{eq:trace2}
    \theta = \sum_{i=1}^k (-3(n_i-1)-4)-2,
\end{equation}
where we used that the proper transform of each component of $\pi^{-1}(0)$ is a $(-1)$-curve by Lemma \ref{lem:curvenumbers}. On the other hand, we have
\begin{equation} \label{eq:trace3}
    2+\sum_{i=1}^k n_i =N.
\end{equation}
Combining \eqref{eq:trace2} and \eqref{eq:trace3}, we therefore obtain
\begin{equation*}
    \theta = -3(N-2)-k-2 \leq -3N-k+4
\end{equation*}
Reconciling this with \eqref{eq:trace1} yields $k=0$, a contradiction. By Proposition \ref{prop:components}, we thus have $Q=1$. By arguing as above, using the fact that the proper transform of the unique component of $E$ has self intersection $0$ or $-1$ by Lemma \ref{lem:curvenumbers}, we obtain
\begin{equation} \label{eq:trace4} \theta \leq \sum_{i=1}^k (-3(n_i-1)-4), \qquad 1+\sum_{i=1}^k n_i =N,   
\end{equation}
so that $\theta \leq -3N+3-k$, again contradicting \eqref{eq:trace1}.

Therefore, $X$ is smooth. By the classification of \cite{CCD,BCCD}, it follows that $X$ is either the FIK shrinker on the total space of $\mathcal{O}_{\mathbb{P}^1}(-1)$, the cylinder $\mathbb{C}\times \mathbb{P}^1$, or the BCCD shrinker. By Proposition \ref{prop:curvaturenottozero}, $X$ is not the FIK shrinker, so the claim follows.
\end{proof}

\section{Proof of Theorem \ref{thm:main}}
\label{section:TypeI}

In this section, we complete the proof of Theorem \ref{thm:main}.

\begin{Lemma} \label{lem:compactnessoffinaltime} 
For any sequence $(x_i,t_i)\in M \times [0,T)$ with $t_i \nearrow T$, we can pass to a subsequence so that for some $\mu \in M_T$, the following hold:
\begin{enumerate}
    \item $\widetilde{K}(x_i,t_i;\cdot,\cdot)\to \widetilde{K}(\cdot,\cdot)$ in $C_{\operatorname{loc}}^{\infty}(M\times[0,T))$, where $d\mu_t=\widetilde{K}(\cdot,t)d\widetilde{g}_t$,

    \item $\lim_{i\to \infty} d_{W_1}^{\widetilde{g}_t}(\mu_t,\widetilde{\nu}_{x_i,t_i;t})=0$ for all $t\in [0,T)$. 

    \item $\lim_{i\to \infty} \mathcal{N}_{x_i,t_i}^{\widetilde{g}}(\tau) = \mathcal{N}_{\mu}^{\widetilde{g}}(\tau)$ for all $\tau \in (0,T)$. 
\end{enumerate}
\end{Lemma}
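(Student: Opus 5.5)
Fix $s<T$. The plan is to prove uniform smooth estimates for the conjugate heat kernels $\widetilde{K}(x_i,t_i;\cdot,\cdot)$ on $M\times[0,s]$, extract a limit by Arzelà–Ascoli and a diagonal argument, show that the limit $\mu$ lies in $M_T$, and then upgrade the convergence to $W_1$ and to the Nash entropy.

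\textbf{Step 1: uniform estimates and the limit.} For $i$ large we have $t_i>s+\delta$ for some fixed $\delta>0$. On the compact manifold $M$ the metrics $\widetilde{g}_t$, $t\in[0,s+\delta]$, are smooth and uniformly equivalent with bounded geometry. The heat kernel upper bound gives
\begin{equation*}
\widetilde{K}(x_i,t_i;y,t)\leq C(t_i-t)^{-2}\leq C\delta^{-2} \qquad \text{for } t\leq s+\delta.
\end{equation*}
Each $\widetilde{K}(x_i,t_i;\cdot,\cdot)$ solves the conjugate heat equation $-\partial_t u-\Delta u+Ru=0$ on $M\times[0,s+\delta]$, whose coefficients are fixed and smooth. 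Parabolic Schauder estimates therefore give $C^k$ bounds on $M\times[0,s]$ that are independent of $i$. By Arzelà–Ascoli and a diagonal argument in $s\nearrow T$, a subsequence converges in $C^\infty_{\operatorname{loc}}(M\times[0,T))$ to a nonnegative solution $\widetilde{K}$. Since $M$ is compact, the total mass $1$ passes to the limit. This defines a conjugate heat flow $\mu=(\mu_t)_{t\in[0,T)}$, which gives (1). Assertion (2) follows at once: for fixed $t$ the densities converge uniformly on the compact space $M$, so
\begin{equation*}
d_{W_1}^{\widetilde{g}_t}(\mu_t,\widetilde{\nu}_{x_i,t_i;t})\leq \operatorname{diam}_{\widetilde{g}_t}(M)\,\|\widetilde{K}_i-\widetilde{K}\|_{L^1}\to 0.
\end{equation*}

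\textbf{Step 2 (main obstacle): $\mu\in M_T$.} Here the quantitative limit $t\nearrow T$ is needed. I would use Bamler's variance monotonicity, namely that $\operatorname{Var}_{\widetilde{g}_t}(\widetilde{\nu}_{x_i,t_i;t})+H_4 t$ is nondecreasing in $t$ and tends to $H_4 t_i$ as $t\nearrow t_i$. This yields
\begin{equation*}
\operatorname{Var}_{\widetilde{g}_t}(\widetilde{\nu}_{x_i,t_i;t})\leq H_4(t_i-t).
\end{equation*}
Variance is continuous under the uniform convergence of densities at fixed $t$, so passing to the limit gives
\begin{equation*}
\int\!\!\int d_{\widetilde{g}_t}^2\,d\mu_t\,d\mu_t=\operatorname{Var}(\mu_t)\leq H_4(T-t)\to 0,
\end{equation*}
and hence $\mu\in M_T$.

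\textbf{Step 3: the Nash entropy.} Fix $\tau$ and set $t=T-\tau$. For large $i$, $t_i-t\to\tau$, and
\begin{equation*}
\mathcal{N}_{x_i,t_i}(t_i-t)=\int \widetilde{K}_i\bigl(-\log \widetilde{K}_i\bigr)\,d\widetilde{g}_t-2\log\bigl(2\pi(t_i-t)\bigr)-2.
\end{equation*}
The integrand $u\mapsto -u\log u$ is continuous on $[0,\infty)$ and the densities converge uniformly, so $\mathcal{N}_{x_i,t_i}(t_i-t)\to\mathcal{N}_\mu(\tau)$.

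Statement (3) is phrased with the fixed argument $\tau$, so it remains to compare $\mathcal{N}_{x_i,t_i}(\tau)$ with $\mathcal{N}_{x_i,t_i}(t_i-t)$. The two arguments differ by $|T-t_i|\to 0$. The local Lipschitz bound $|\partial_\tau\mathcal{N}|\leq C/\tau$ holds uniformly, with $C$ depending on the lower bound of scalar curvature, so
\begin{equation*}
\bigl|\mathcal{N}_{x_i,t_i}(\tau)-\mathcal{N}_{x_i,t_i}(t_i-t)\bigr|\to 0.
\end{equation*}
Combined with the previous display, this proves (3).
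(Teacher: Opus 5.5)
Your proposal is correct, and it is essentially a self-contained version of what the paper does by citation. The paper's proof invokes \cite[Lemma 2.2]{mant} for the smooth subconvergence of the conjugate heat kernels (your Step 1) and \cite[Lemma 3.1]{HallLower} for the facts that the limit lies in $M_T$ and that the $W_1$-distances and Nash entropies converge (your Steps 2 and 3); your ingredients are the standard ones and each step checks out. These ingredients are uniform kernel bounds on $M\times[0,s]$ with parabolic estimates, Bamler's variance bound $\operatorname{Var}(\widetilde{\nu}_{x_i,t_i;t})\leq H_4(t_i-t)$, and the uniform bound $|\partial_\tau \mathcal{N}_{x_i,t_i}|\leq C/\tau$ to absorb the shift by $T-t_i$. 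The uniform kernel bound can even be obtained without any input from near $T$, via the reproduction formula through time $s+\delta$.
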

\begin{proof} This follows from \cite[Lemma 2.2]{mant} and \cite[Lemma 3.1]{HallLower}. 
\end{proof}

\begin{Lemma} \label{lem:comparisonofentropy} Letting $W_{\operatorname{BCCD}}, W_{\operatorname{Cyl}}$ be the Nash entropies of the BCCD shrinker and the round shrinking $\mathbb{C}\times \mathbb{P}^1$, respectively, we have
\begin{equation*}
    \log(\frac{1}{2}) < W_{\operatorname{BCCD}} < W_{\operatorname{Cyl}}.
\end{equation*}
\end{Lemma}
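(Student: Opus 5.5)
The plan is to compute both entropies through the toric description of the two shrinkers, using the standard fact that the Nash entropy of a K\"ahler--Ricci shrinker equals its Perelman entropy $W$, which is the logarithm of a normalized weighted volume. Concretely, for a shrinker with $\Ric+\nabla^2 f=g$ normalized as in Theorem \ref{bamconvergence}, I will use
\begin{equation*}
    W=\log\Big((4\pi)^{-2}\int_X e^{-f}\,dg\Big)+\text{(a normalization constant)}.
\end{equation*}
The constant is fixed so that the Gaussian soliton on $\mathbb{C}^2$ has $W=0$; this is consistent with the normalization of $\mathcal{N}_\mu$ in Definition \ref{conjsingtime}. Both the cylinder $\mathbb{C}\times\mathbb{P}^1$ and the BCCD shrinker are toric. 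For a toric shrinker, the Duistermaat--Heckman formula turns $\int_X e^{-f}\,dg$ into $(2\pi)^2\int_P e^{-\langle b,x\rangle}\,dx$. Here $P$ is the moment polytope, normalized by $[\omega]=2\pi c_1$ so that each facet has the form $\{\ell_j\geq -1\}$ with primitive inward normal, and $b$ is the soliton vector field. By the Tian--Zhu / Donaldson characterization, $b$ is the unique minimizer of the strictly convex function
\begin{equation*}
    F_P(b):=\int_P e^{-\langle b,x\rangle}\,dx.
\end{equation*}
Consequently $W=\log\min_b F_P(b)+c$ for a universal constant $c$. I will fix $c$ by the case $P=\{x_1\ge -1,\ x_2\ge -1\}$ (the quadrant for $\mathbb{C}^2$), where $\min_b F_P=e^2$ at $b=(1,1)$.

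For the cylinder, I take $P_{\operatorname{Cyl}}=\{x_1\geq -1,\ -1\leq x_2\leq 1\}$. The minimization separates: $b=(1,0)$ and $F=e\cdot 2$. Hence $W_{\operatorname{Cyl}}=\log(2e)-2=\log(2/e)$, the familiar entropy of the round $S^2$ shrinker. The BCCD polytope is obtained from $P_{\operatorname{Cyl}}$ by cutting one corner along the new facet $\{x_1+x_2\geq -1\}$ coming from the blowup. In particular $P_{\operatorname{BCCD}}\subsetneq P_{\operatorname{Cyl}}$.

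The upper inequality then follows at once. Since $e^{-\langle b,x\rangle}>0$ on the removed triangle,
\begin{equation*}
    \min_b F_{P_{\operatorname{BCCD}}}\leq F_{P_{\operatorname{BCCD}}}(1,0)<F_{P_{\operatorname{Cyl}}}(1,0)=\min_b F_{P_{\operatorname{Cyl}}}.
\end{equation*}
This gives $W_{\operatorname{BCCD}}<W_{\operatorname{Cyl}}$.

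The lower bound $W_{\operatorname{BCCD}}>\log\frac12$ is the main obstacle, because the BCCD shrinker is not explicit and $b$ is only characterized variationally. In exponentiated form the target is $\min_b F_{P_{\operatorname{BCCD}}}(b)>\frac{e^2}{2}$, whereas $F_{P_{\operatorname{Cyl}}}(1,0)=2e$. The first thing I would try is a lower bound valid for every $b$:
\begin{equation*}
    F_{P_{\operatorname{BCCD}}}(b)\geq F_{P_{\operatorname{BCCD}}\cap\{x_1\geq 0\}}(b).
\end{equation*}
The region $P_{\operatorname{BCCD}}\cap\{x_1\geq 0\}$ is the half-strip $[0,\infty)\times[-1,1]$ in $P_{\operatorname{Cyl}}$, which the cut does not touch. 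Finiteness of $F$ forces $b_1>0$, and the half-strip integral equals $\frac{2\sinh b_2}{b_1 b_2}$ (read as $\frac{2}{b_1}$ when $b_2=0$). This is not bounded below uniformly in $b$, so I would combine it with the remaining part of the polytope, namely the slab $-1\le x_1\le 0$ with the corner removed. That slab contributes at least $\int e^{-b_1x_1-b_2x_2}$ over a fixed triangle, and this contribution grows with $b_1$.

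Concretely, I would show that $F_{P_{\operatorname{BCCD}}}(b)>\frac{e^2}{2}\approx 3.69$ for all $b$ by splitting into the cases $b_1\leq b_0$ and $b_1\geq b_0$ for a suitable explicit $b_0$, using the elementary estimates above in each case. If this turns out too tight, the fallback is to use the known symmetry and reduction of the BCCD soliton vector to a one-parameter family, compute $\min_b F$ numerically with rigorous error bounds, and verify that it exceeds $e^2/2$. The numerical picture suggests this holds comfortably: the cut triangle carries weight only of order $e\cdot\frac12$ out of $2e$.
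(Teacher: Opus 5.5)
Your toric framework is sound. With facets $\ell_j\ge -1$ one gets $W=-2+\log\inf_b F_P(b)$, which reproduces $W_{\operatorname{Cyl}}=\log 2-1$. Your upper bound via $P_{\operatorname{BCCD}}\subsetneq P_{\operatorname{Cyl}}$, i.e.\ $\inf_b F_{P_{\operatorname{BCCD}}}\le F_{P_{\operatorname{BCCD}}}(1,0)=2e-1<2e$, is correct. It is also slightly cleaner than the paper's argument, which evaluates an explicit one-variable formula at $x=\frac12$.

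The gap is the lower bound $W_{\operatorname{BCCD}}>\log\frac12$, which is the substantive half of the lemma and which you only announce. Your case split in $b_1$ does not say how the $b_2$-dependence of the slab term is controlled, and the margin is thin: $\inf_b F\approx 4.15$ against the required $e^2/2\approx 3.69$. For instance, the natural way to make the slab term uniform in $b_2$ is to restrict to the $x_2$-symmetric triangle $\{-1\le x_1\le 0,\ |x_2|\le 1+x_1\}$ and use $\sinh y/y\ge 1$ there and on the half-strip. This gives $F(b)\ge \frac{2}{b_1}+\frac{2(e^{b_1}-1-b_1)}{b_1^2}$, whose minimum is only about $3.09<e^2/2$. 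So the estimates as described do not close. A sharper two-variable estimate might, but you have not supplied one, and ``numerics with rigorous error bounds'' is not yet a proof.

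The missing idea is to exploit the symmetry exactly, not as a numerical fallback. Write $F:=F_{P_{\operatorname{BCCD}}}$. The linear involution $A(x_1,x_2)=(x_1+x_2,-x_2)$ preserves $P_{\operatorname{BCCD}}$: it swaps the facets $\{x_1\ge-1\}$ and $\{x_1+x_2\ge -1\}$, and swaps $\{x_2\ge -1\}$ and $\{x_2\le 1\}$. Since $|\det A|=1$, we get $F(A^Tb)=F(b)$ with $A^Tb=(b_1,b_1-b_2)$. Convexity of $F$ then gives $F(b)\ge F\big(\tfrac{b+A^Tb}{2}\big)=F(b_1,\tfrac{b_1}{2})$. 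A direct computation shows $F(2x,x)=\frac{e^{2x}-e^x}{x^2}$, which is exactly the formula $W_{\operatorname{BCCD}}=-2+\min_{x>0}\log\frac{e^{2x}-e^x}{x^2}$ that the paper takes from \cite{CCD}.

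The lower bound is then immediate, as in the paper. By Jensen, $\frac{e^{2x}-e^x}{x}=\int_1^2 e^{sx}\,ds\ge e^{3x/2}$. Moreover $\min_{x>0}x^{-1}e^{3x/2}=\frac{3e}{2}$, attained at $x=\frac23$. Hence $\inf_b F\ge\frac{3e}{2}>\frac{e^2}{2}$ because $3>e$, i.e.\ $W_{\operatorname{BCCD}}\ge\log\frac32-1>\log\frac12$.
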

\begin{proof} By \cite{CHI}, $W_{\operatorname{Cyl}}=\log(2)-1$. By \cite[Example 2.33]{CCD} (see also \cite[Appendix A]{NO}), we have 
\begin{equation*}
    W_{\operatorname{BCCD}} = -2+\min_{x>0} \log \left( \frac{e^{2x}-e^x}{x^2}\right).
\end{equation*}
Jensen's inequality gives 
\begin{equation*}
    \frac{e^{2x}-e^x}{x^2} \geq \frac{e^{\frac{3x}{2}}}{x}.
\end{equation*}
An elementary computation then gives $\frac{e^{\frac{3x}{2}}}{x} \geq \frac{3}{2}e$ for all $x>0$, hence 
\begin{equation*}
    W_{\operatorname{BCCD}} \geq \log(\frac{3}{2}) -1 > \log(\frac{1}{2}).
\end{equation*}
Finally, we estimate
\begin{equation*}
    W_{\operatorname{BCCD}} \leq -2+\log \left( 4(e-e^{\frac{1}{2}}) \right) <\log(2)-1 = W_{\operatorname{Cyl}}.
\end{equation*}
\end{proof}

\begin{proof}[Proof of Theorem \ref{thm:main}] \eqref{thm:main:TypeI}
Suppose by way of contradiction that there exist $(x_i,t_i)\in M\times [0,T)$ such that $|\operatorname{Rm}_{\widetilde{g}_{t_i}}|_{\widetilde{g}_{t_i}}(x_i)(T-t_i) \to \infty$, and define
\begin{equation*}
    \widetilde{g}_{i,t}:=(T-t_i)^{-1}\widetilde{g}_{t_i+(T-t_i)t}, \qquad \widetilde{\nu}_t^i:= \widetilde{\nu}_{x_i,t_i;t_i+(T-t_i)t}
\end{equation*}
for $t\in [-(T-t_i)^{-1}t_i,0]$. By assumption, we then have $\lim_{i \to \infty} r_{\operatorname{Rm}}^{\widetilde{g}_i}(x_i,0)=0$, so by \cite[Theorem 10.2]{Bam1}, there exists $\epsilon_0>0$ such that for any $\tau>0$, we have
\begin{align} \label{eq:nontrivialentropy}
    \limsup_{i \to \infty} \mathcal{N}_{x_i,0}^{\widetilde{g}_i}(\tau) \leq -\epsilon_0.
\end{align}
After passing to a subsequence, we may assume that 
\begin{equation*}
    (M,(\widetilde{g}_{i,t})_{t\in [-(T-t_i)^{-1}t_i,0)},(\widetilde{\nu}_t^i)_{t\in [-(T-t_i)^{-1}t_i,0]})\xrightarrow[i\to \infty]{\mathbb{F}} (\mathcal{Z},(\mu_t)_{t\in (-\infty,0]})
\end{equation*}
for some $H_4$-concentrated metric flow $\mathcal{Z}$. Using \cite[Theorem 15.45]{Bam3} to pass \eqref{eq:nontrivialentropy} to the limit, we obtain
\begin{align} \label{eq:obviouslynontrivial}
    \mathcal{N}_{\mu}(\tau)\leq -\epsilon_0
\end{align}
for all $\tau>0$. Letting $\mathcal{Y}$ be any tangent flow of $\mathcal{Z}$ at $\mu$, it follows from \eqref{eq:obviouslynontrivial} that $\mathcal{Y}$ is modeled on some nontrivial orbifold shrinking gradient K\"ahler-Ricci soliton $(Y,g_Y)$. By Lemma \ref{lem:noncompact}, $Y$ is noncompact. If $Y$ were smooth, then it would be the BCCD shrinker, $\mathbb{C}\times \mathbb{P}^1$, or the FIK shrinker; in any case, it would contain a holomorphic curve of self-intersection $0$ or $-1$. Then a diagonal argument and \cite[Lemma 2.7]{BCCD} would yield a contradiction. Thus $Y$ has at least one orbifold point $y_{\ast}$. By a diagonal argument, there exist $Q_i >0$ and $(y_{i},s_{i})\in M\times[0,T)$ such that, setting $\widehat{g}_{i,s}:=Q_{i}\widetilde{g}_{s_{i}+Q_{i}^{-1}s}$ and $\widehat{\nu}_s^i:= \widetilde{\nu}_{y_i,s_i,s_i+Q_i^{-1}s}$ we have
\begin{equation*} (M,(\widehat{g}_{i,s})_{s\in [-Q_is_i,0)},(\widehat{\nu}_s^i)_{s\in [-Q_is_i,0)})\xrightarrow[i\to\infty]{\mathbb{F}}(\mathcal{Y},(\nu_{y_{\ast}})_{s\in(-\infty,0)}),\end{equation*}
and $Q_{i}(T-s_{i})\to\infty$. We know the unique tangent flow of $\mathcal{Y}$ based at $y_{\ast}$ is a static flow modeled on $C(\mathbb{S}^{3}/\Gamma)$ for some finite subgroup $\Gamma\leq U(2)$ acting freely on $\mathbb{S}^3$, so that \begin{equation*} \lim_{\tau\searrow0}\mathcal{N}_{y_{\ast}}(\tau)=\log \frac{1}{|\Gamma|}. \end{equation*}
For any $\sigma \in (0,T)$, we have
\begin{equation}\label{eq-entropy upper bound}
\mathcal{N}_{y_i,s_i}^{\widetilde{g}}(\sigma)=\mathcal{N}_{y_i,0}^{\widehat{g}_i}(Q_i \sigma)\leq \frac{3}{4}\log\frac{1}{2}+\frac{1}{4}W_{\operatorname{BCCD}}
\end{equation}
when $i=i(\sigma)\in \mathbb{N}$ is sufficiently large, where $s_i \to T$. After passing to a subsequence, Lemma \ref{lem:compactnessoffinaltime} gives $\widehat{\mu}\in M_T$ such that for all $t\in (0,T)$
\begin{equation}\label{entropy-convergence}
    \lim_{i \to \infty} \mathcal{N}_{y_i,s_i}^{\widetilde{g}}(s_i-t)=\mathcal{N}_{\widehat{\mu}}^{\widetilde{g}}(T-t).
\end{equation}
 By Proposition \ref{prop:tangentflowsingularfiber}, Lemma \ref{lem:comparisonofentropy}, and \cite[Theorem 2.37]{Bam3}, we know that 
\begin{equation}\label{entropy lower bound}
\lim_{\tau\searrow 0}\mathcal{N}_{\widehat{\mu}}^{\widetilde{g}}(\tau)\geq W_{\mathrm{BCCD}}>\log(\frac12).
\end{equation}
Choose $\tau_0>0$ sufficiently small and fix $0<\sigma<\tau_0$.
Then, for sufficiently large $i=i(\sigma,\tau_0)\in\mathbb{N}$,
\eqref{eq-entropy upper bound}, \eqref{entropy-convergence},
and \eqref{entropy lower bound} yield a contradiction.

\eqref{thm:main:regular},\eqref{thm:main:singular} Fix $p\in M$ and a sequence $t_i\nearrow T$. By Lemma \ref{lem:compactnessoffinaltime}, after passing to a subsequence, the conjugate heat kernels based at $(p,t_i)$ converge to some $\mu\in M_T$. The Type I bound, the Gaussian upper bound in \cite[Proposition 2.8]{mant}, and Bishop--Gromov volume comparison imply
\begin{equation*}
    d_{W_1}^{\widetilde{g}_t}(\delta_p,\mu_t)
    =\int_M d_{\widetilde{g}_t}(p,q)\,d\mu_t(q)
    \leq C\sqrt{T-t}.
\end{equation*}
If $(z_t,t)$ is an $H_4$-center of $\mu$, then
\begin{equation}\label{eq-distance bounded}
    d_{\widetilde{g}_t}(p,z_t)
    \leq d_{W_1}^{\widetilde{g}_t}(\delta_p,\mu_t)
    +d_{W_1}^{\widetilde{g}_t}(\mu_t,\delta_{z_t})
    \leq C\sqrt{T-t}.
\end{equation}
Combining the uniform Lipschitz bound for $\widetilde{\pi}$ from \cite[Lemma 7.3]{songnotes} with Lemma \ref{lem:Hncenters}\eqref{lem:Hncenters1}, we obtain
\begin{equation*}
    d_{g_Y}\bigl(\widetilde{\pi}(p),\widehat{\pi}(\mu)\bigr)
    \leq C d_{\widetilde{g}_t}(p,z_t)+C\sqrt{T-t}
    \leq C\sqrt{T-t}.
\end{equation*}
Letting $t\nearrow T$ gives $\widehat{\pi}(\mu)=\widetilde{\pi}(p)$.

Applying Theorem \ref{bamconvergence} and Proposition \ref{prop:tangentflowregularfiber} and passing to a further subsequence, we obtain a tangent flow at $\mu$ modeled on a smooth K\"ahler--Ricci shrinker $(X,J,g)$.  By \cite[Proposition 2.7]{mant} and Perelman's no-local-collapsing theorem, we can apply \cite[Proposition 2.7]{HallLower}. Together with \eqref{eq-distance bounded}, this yields a point \(p_{\infty}\in X\) such that
\begin{equation*}
    (M,(T-t_i)^{-\frac{1}{2}}d_{\widetilde{g}_{t_i}},p) \to (X,d_g,p_{\infty})
\end{equation*}
subsequentially in the pointed Gromov-Hausdorff sense as $i\to \infty$. By \eqref{thm:main:TypeI} and Hamilton's compactness theorem, the convergence occurs in the smooth pointed Cheeger-Gromov sense. 

If $p\in \widetilde{U}$, then $X$ is the round cylinder $\mathbb{C}\times \mathbb{P}^1$ by Proposition \ref{prop:tangentflowregularfiber}, hence \eqref{thm:main:regular} follows. Suppose instead that $p \in M\setminus \widetilde{U}$, so that by Proposition \ref{prop:tangentflowsingularfiber}, it remains to show that $X$ is not the round cylinder $\mathbb{C}\times \mathbb{P}^1$. By the smooth Cheeger-Gromov convergence, there exists a precompact open exhaustion $(U_i)$ of $X$ along with open embeddings $\psi_i:U_i \to M$ such that
\begin{equation*}\widetilde{g}_{i,-1}:=(T-t_i)^{-1}
\psi_i^{\ast}\widetilde{g}_{t_i}\to g,\qquad \psi_i^{\ast}\widetilde{J}\to J, \qquad \psi_i^{-1}(p)\to p_{\infty}
\end{equation*}
as $i\to \infty$. Let $D\subseteq M$ be a $(-1)$-curve passing through $p$.
Using the monotonicity of minimal surfaces, Type I curvature bound, Perelman’s no local collapsing and arguing as in \cite[Lemma 6.6]{LZ26}, we know that  the extrinsic $\widetilde{g}_{i,-1}$-diameter of $D$ is bounded uniformly in $i\in \mathbb{N}$; see also \cite[Proof of Proposition 5.2]{JScollapsing}. It follows that $D \Subset \psi_i(U_i)$ for all sufficiently large $i\in \mathbb{N}$, so that $\psi_i^{-1}(D)$ represents a class in $H_2(X,\mathbb{Z})$ with strictly negative self-intersection. Because the intersection form on $H_2(\mathbb{C}\times \mathbb{P}^1,\mathbb{Z})$ is zero, this implies $X$ is the BCCD shrinker.
\end{proof}

\printbibliography

@article{LZ26,
  author = {Li, Yu and Zhang, Junsheng},
  title = {Gromov--Hausdorff Limits of Noncollapsed K{\"a}hler--Ricci Flows and the Geometry of Ricci Shrinkers},
  journal = {arXiv preprint arXiv:2607.25644},
  year = {2026}
}

@book{BKV,
  title={{Compact complex surfaces}},
  author={Barth, Wolf and Hulek, Klaus and Peters, Chris and Van de Ven, Antonius},
  volume={4},
  year={2003},
  publisher={Springer Science \& Business Media}
}

@article{HNP,
  title={{Symplectic convexity theorems and coadjoint orbits}},
  author={Hilgert, Joachim and Neeb, Karl-Hermann and Plank, Werner},
  journal={Compositio Mathematica},
  volume={94},
  number={2},
  pages={129--180},
  year={1994}
}

@misc{Bam1,
  title = {Entropy and Heat Kernel Bounds on a {{Ricci}} Flow Background},
  author = {Bamler, Richard H.},
  year = {2021},
  month = sep,
  number = {arXiv:2008.07093},
  eprint = {2008.07093},
  eprinttype = {arxiv},
  primaryclass = {math},
  publisher = {{arXiv}},
  archiveprefix = {arXiv},
  langid = {english}
}

@misc{Bam2,
  title = {Compactness Theory of the Space of {{Super Ricci}} Flows},
  author = {Bamler, Richard H.},
  year = {2021},
  month = jun,
  number = {arXiv:2008.09298},
  eprint = {2008.09298},
  eprinttype = {arxiv},
  primaryclass = {math},
  publisher = {{arXiv}},
  archiveprefix = {arXiv},
  langid = {english}
}

@misc{Bam3,
  title = {Structure Theory of Non-Collapsed Limits of {{Ricci}} Flows},
  author = {Bamler, Richard H.},
  year = {2021},
  month = sep,
  number = {arXiv:2009.03243},
  eprint = {2009.03243},
  eprinttype = {arxiv},
  primaryclass = {math},
  publisher = {{arXiv}},
  archiveprefix = {arXiv},
  langid = {english}
}

@incollection {BandoMabuchi,
    AUTHOR = {Bando, Shigetoshi and Mabuchi, Toshiki},
     TITLE = {Uniqueness of {E}instein {K}\"ahler metrics modulo connected
              group actions},
 BOOKTITLE = {Algebraic geometry, {S}endai, 1985},
    SERIES = {Adv. Stud. Pure Math.},
    VOLUME = {10},
     PAGES = {11--40},
 PUBLISHER = {North-Holland, Amsterdam},
      YEAR = {1987},
      ISBN = {0-444-70313-6},
   MRCLASS = {53C25 (32C10 32J99 32M05 53C55 58E20)},
  MRNUMBER = {946233},
MRREVIEWER = {Yusuke\ Sakane},
       DOI = {10.2969/aspm/01010011},
       URL = {https://doi.org/10.2969/aspm/01010011},
}

@article {BCCD,
    AUTHOR = {Bamler, Richard H. and Cifarelli, Charles and Conlon, Ronan J.
              and Deruelle, Alix},
     TITLE = {A new complete two-dimensional shrinking gradient
              {K}\"ahler-{R}icci soliton},
   JOURNAL = {Geom. Funct. Anal.},
  FJOURNAL = {Geometric and Functional Analysis},
    VOLUME = {34},
      YEAR = {2024},
    NUMBER = {2},
     PAGES = {377--392},
      ISSN = {1016-443X,1420-8970},
   MRCLASS = {53C55 (32Q15 53C25)},
  MRNUMBER = {4715366},
MRREVIEWER = {Tam\'as\ Darvas},
       DOI = {10.1007/s00039-024-00668-9},
       URL = {https://doi-org.proxy.libraries.rutgers.edu/10.1007/s00039-024-00668-9},
}

@article {BeCh,
    AUTHOR = {Behnke, Kurt and Christophersen, Jan Arthur},
     TITLE = {{$M$}-resolutions and deformations of quotient singularities},
   JOURNAL = {Amer. J. Math.},
  FJOURNAL = {American Journal of Mathematics},
    VOLUME = {116},
      YEAR = {1994},
    NUMBER = {4},
     PAGES = {881--903},
      ISSN = {0002-9327,1080-6377},
   MRCLASS = {14B07 (14J17 32S30)},
  MRNUMBER = {1287942},
MRREVIEWER = {Jonathan\ M.\ Wahl},
       DOI = {10.2307/2375004},
       URL = {https://doi.org/10.2307/2375004},
}

@book {BG,
    AUTHOR = {Boyer, Charles P. and Galicki, Krzysztof},
     TITLE = {Sasakian geometry},
    SERIES = {Oxford Mathematical Monographs},
 PUBLISHER = {Oxford University Press, Oxford},
      YEAR = {2008},
     PAGES = {xii+613},
      ISBN = {978-0-19-856495-9},
   MRCLASS = {53C25 (14J45 32J27 53-02 57R30 57S25)},
  MRNUMBER = {2382957},
MRREVIEWER = {Andrew\ Swann},
}

@article {Caosoliton,
    AUTHOR = {Cao, Huai-Dong},
     TITLE = {Limits of solutions to the {K}\"ahler-{R}icci flow},
   JOURNAL = {J. Differential Geom.},
  FJOURNAL = {Journal of Differential Geometry},
    VOLUME = {45},
      YEAR = {1997},
    NUMBER = {2},
     PAGES = {257--272},
      ISSN = {0022-040X,1945-743X},
   MRCLASS = {53C21 (53C55)},
  MRNUMBER = {1449972},
MRREVIEWER = {Ben\ Andrews},
       URL = {http://projecteuclid.org/euclid.jdg/1214459797},
}

@article {CarrKutt,
    AUTHOR = {Carrell, James B. and Kuttler, Jochen},
     TITLE = {Smooth points of {$T$}-stable varieties in {$G/B$} and the
              {P}eterson map},
   JOURNAL = {Invent. Math.},
  FJOURNAL = {Inventiones Mathematicae},
    VOLUME = {151},
      YEAR = {2003},
    NUMBER = {2},
     PAGES = {353--379},
      ISSN = {0020-9910,1432-1297},
   MRCLASS = {14M17},
  MRNUMBER = {1953262},
MRREVIEWER = {Raika\ Dehy},
       DOI = {10.1007/s00222-002-0256-5},
       URL = {https://doi.org/10.1007/s00222-002-0256-5},
}

@article{CCD,
    author = {Cifarelli, Charles and Conlon, Ronan and Deruelle, Alix},
    title = {Type I singularities},
    journal = {J. Eur. Math. Soc},
    year = {2024}
}

@article {ChenWangsurfaces,
    AUTHOR = {Chen, Xiuxiong and Wang, Bing},
     TITLE = {The {K}\"ahler {R}icci flow on {F}ano surfaces ({I})},
   JOURNAL = {Math. Z.},
  FJOURNAL = {Mathematische Zeitschrift},
    VOLUME = {270},
      YEAR = {2012},
    NUMBER = {1-2},
     PAGES = {577--587},
      ISSN = {0025-5874,1432-1823},
   MRCLASS = {53C44 (32G05 32Q20)},
  MRNUMBER = {2875850},
MRREVIEWER = {Julien\ Keller},
       DOI = {10.1007/s00209-010-0813-3},
       URL = {https://doi.org/10.1007/s00209-010-0813-3},
}

@misc{CHI,
      title={Gaussian densities and stability for some Ricci solitons}, 
      author={Huai-Dong Cao and Richard S. Hamilton and Tom Ilmanen},
      year={2004},
      eprint={math/0404165},
      archivePrefix={arXiv},
      primaryClass={math.DG},
      url={https://arxiv.org/abs/math/0404165}, 
}

@misc{CHL,
      title={K\"ahler-Ricci Tangent Flows in the Analytic Minimal Model Program}, 
      author={Longteng Chen and Max Hallgren and Lucas Lavoyer},
      year={2026},
      eprint={2608.19152},
      archivePrefix={arXiv},
      primaryClass={math.DG},
      url={https://arxiv.org/abs/2608.19152}, 
}

@misc{CHM,
      title={Non-collapsed finite time singularities of the Ricci flow on compact K\"ahler surfaces are of Type I}, 
      author={Ronan J. Conlon and Max Hallgren and Zilu Ma},
      year={2025},
      eprint={2502.19804},
      archivePrefix={arXiv},
      primaryClass={math.DG},
      url={https://arxiv.org/abs/2502.19804}, 
}

@article {CPS,
    AUTHOR = {Choi, Hakho and Park, Heesang and Shin, Dongsoo},
     TITLE = {Symplectic fillings of quotient surface singularities and
              minimal model program},
   JOURNAL = {J. Korean Math. Soc.},
  FJOURNAL = {Journal of the Korean Mathematical Society},
    VOLUME = {58},
      YEAR = {2021},
    NUMBER = {2},
     PAGES = {419--437},
      ISSN = {0304-9914,2234-3008},
   MRCLASS = {57R40 (14B07 14E30 57R55)},
  MRNUMBER = {4221573},
MRREVIEWER = {Burak\ Ozbagci},
       DOI = {10.4134/JKMS.j200093},
       URL = {https://doi.org/10.4134/JKMS.j200093},
}

@article {crisp,
    AUTHOR = {Crisp, J. S. and Hillman, J. A.},
     TITLE = {Embedding {S}eifert fibred {$3$}-manifolds and {${\rm
              Sol}^3$}-manifolds in {$4$}-space},
   JOURNAL = {Proc. London Math. Soc. (3)},
  FJOURNAL = {Proceedings of the London Mathematical Society. Third Series},
    VOLUME = {76},
      YEAR = {1998},
    NUMBER = {3},
     PAGES = {685--710},
      ISSN = {0024-6115,1460-244X},
   MRCLASS = {57N35 (57N10 57N13)},
  MRNUMBER = {1620508},
MRREVIEWER = {Charles\ Livingston},
       DOI = {10.1112/S0024611598000379},
       URL = {https://doi-org.proxy.libraries.rutgers.edu/10.1112/S0024611598000379},
}

@misc{FH,
      title={An $\epsilon$-Regularity Theorem for Non-collapsed Ricci Flow}, 
      author={Harry Fluck and Max Hallgren},
      year={2025},
      eprint={2509.14154},
      archivePrefix={arXiv},
      primaryClass={math.DG},
      url={https://arxiv.org/abs/2509.14154}, 
}

@article {FIK,
    AUTHOR = {Feldman, Mikhail and Ilmanen, Tom and Knopf, Dan},
     TITLE = {Rotationally symmetric shrinking and expanding gradient
              {K}\"ahler-{R}icci solitons},
   JOURNAL = {J. Differential Geom.},
  FJOURNAL = {Journal of Differential Geometry},
    VOLUME = {65},
      YEAR = {2003},
    NUMBER = {2},
     PAGES = {169--209},
      ISSN = {0022-040X,1945-743X},
   MRCLASS = {53C44 (32Q20 53C25)},
  MRNUMBER = {2058261},
MRREVIEWER = {Xi\ Ping\ Zhu},
       URL = {http://projecteuclid.org/euclid.jdg/1090511686},
}

@article {Fong,
    AUTHOR = {Fong, Frederick Tsz-Ho},
     TITLE = {K\"ahler-{R}icci flow on projective bundles over
              {K}\"ahler-{E}instein manifolds},
   JOURNAL = {Trans. Amer. Math. Soc.},
  FJOURNAL = {Transactions of the American Mathematical Society},
    VOLUME = {366},
      YEAR = {2014},
    NUMBER = {2},
     PAGES = {563--589},
      ISSN = {0002-9947,1088-6850},
   MRCLASS = {53C44 (53C55 55R25)},
  MRNUMBER = {3130308},
MRREVIEWER = {Julien\ Keller},
       DOI = {10.1090/S0002-9947-2013-05726-1},
       URL = {https://doi-org.proxy.libraries.rutgers.edu/10.1090/S0002-9947-2013-05726-1},
}

@book {Gaborbook,
    AUTHOR = {Sz\'ekelyhidi, G\'abor},
     TITLE = {An introduction to extremal {K}\"ahler metrics},
    SERIES = {Graduate Studies in Mathematics},
    VOLUME = {152},
 PUBLISHER = {American Mathematical Society, Providence, RI},
      YEAR = {2014},
     PAGES = {xvi+192},
      ISBN = {978-1-4704-1047-6},
   MRCLASS = {53C55 (14L24 32Q20 53C25)},
  MRNUMBER = {3186384},
MRREVIEWER = {Andrew\ Bucki},
       DOI = {10.1090/gsm/152},
       URL = {https://doi.org/10.1090/gsm/152},
}

@article {HallLower,
    AUTHOR = {Hallgren, Max},
     TITLE = {Ricci flow with {R}icci curvature and volume bounded below},
   JOURNAL = {Math. Ann.},
  FJOURNAL = {Mathematische Annalen},
    VOLUME = {390},
      YEAR = {2024},
    NUMBER = {2},
     PAGES = {2633--2706},
      ISSN = {0025-5831,1432-1807},
   MRCLASS = {53E20 (53C21)},
  MRNUMBER = {4801837},
       DOI = {10.1007/s00208-024-02821-z},
       URL = {https://doi-org.proxy.libraries.rutgers.edu/10.1007/s00208-024-02821-z},
}

@misc{HZ,
      title={Singular K\"ahler--Ricci shrinkers and polarized Fano fibrations}, 
      author={Max Hallgren and Junsheng Zhang},
      year={2026},
      eprint={2605.25213},
      archivePrefix={arXiv},
      primaryClass={math.DG},
      url={https://arxiv.org/abs/2605.25213}, 
}

@misc{JScollapsing,
      title={Finite-Time Singularities of the K\"ahler--Ricci Flow on Fano Bundles}, 
      author={Wangjian Jian and Jian Song},
      year={2026},
      eprint={2609.02878},
      archivePrefix={arXiv},
      primaryClass={math.DG},
      url={https://arxiv.org/abs/2609.02878}, 
}

@incollection {Koiso,
    AUTHOR = {Koiso, Norihito},
     TITLE = {On rotationally symmetric {H}amilton's equation for
              {K}\"ahler-{E}instein metrics},
 BOOKTITLE = {Recent topics in differential and analytic geometry},
    SERIES = {Adv. Stud. Pure Math.},
    VOLUME = {18-{\rm I}},
     PAGES = {327--337},
 PUBLISHER = {Academic Press, Boston, MA},
      YEAR = {1990},
      ISBN = {0-12-001018-6},
   MRCLASS = {53C25 (32L07 53C55 58G30)},
  MRNUMBER = {1145263},
MRREVIEWER = {Hajime\ Tsuji},
       DOI = {10.2969/aspm/01810327},
       URL = {https://doi.org/10.2969/aspm/01810327},
}

@article {KollarSB,
    AUTHOR = {Koll\'ar, J. and Shepherd-Barron, N. I.},
     TITLE = {Threefolds and deformations of surface singularities},
   JOURNAL = {Invent. Math.},
  FJOURNAL = {Inventiones Mathematicae},
    VOLUME = {91},
      YEAR = {1988},
    NUMBER = {2},
     PAGES = {299--338},
      ISSN = {0020-9910,1432-1297},
   MRCLASS = {14J10 (14D20 14J30 32G10 32G13)},
  MRNUMBER = {922803},
MRREVIEWER = {Yujiro\ Kawamata},
       DOI = {10.1007/BF01389370},
       URL = {https://doi.org/10.1007/BF01389370},
}

@article {Licollapsing,
    AUTHOR = {Li, Jiangtao},
     TITLE = {On the tangent flow to the collapsing {K}\"ahler-{R}icci flow
              on {H}irzebruch surfaces},
   JOURNAL = {Internat. J. Math.},
  FJOURNAL = {International Journal of Mathematics},
    VOLUME = {36},
      YEAR = {2025},
    NUMBER = {12},
     PAGES = {Paper No. 2550048, 12},
      ISSN = {0129-167X,1793-6519},
   MRCLASS = {53E30 (53C25)},
  MRNUMBER = {4966651},
MRREVIEWER = {Yang\ Li},
       DOI = {10.1142/S0129167X2550048X},
       URL = {https://doi.org/10.1142/S0129167X2550048X},
}

@article {LuPseudo,
    AUTHOR = {Lu, Peng},
     TITLE = {A local curvature bound in {R}icci flow},
   JOURNAL = {Geom. Topol.},
  FJOURNAL = {Geometry \& Topology},
    VOLUME = {14},
      YEAR = {2010},
    NUMBER = {2},
     PAGES = {1095--1110},
      ISSN = {1465-3060,1364-0380},
   MRCLASS = {53C44},
  MRNUMBER = {2629901},
MRREVIEWER = {Esther\ Cabezas Rivas},
       DOI = {10.2140/gt.2010.14.1095},
       URL = {https://doi-org.proxy.libraries.rutgers.edu/10.2140/gt.2010.14.1095},
}

@article {LiWang,
    AUTHOR = {Li, Yu and Wang, Bing},
     TITLE = {On {K}\"ahler {R}icci shrinker surfaces},
   JOURNAL = {Acta Math.},
  FJOURNAL = {Acta Mathematica},
    VOLUME = {236},
      YEAR = {2026},
    NUMBER = {1},
     PAGES = {1--50},
      ISSN = {0001-5962,1871-2509},
   MRCLASS = {53C25 (53C55)},
  MRNUMBER = {5055602},
       DOI = {10.4310/acta.2026.v236.n1.a1},
       URL = {https://doi.org/10.4310/acta.2026.v236.n1.a1},
}

@article {mant,
    AUTHOR = {Mantegazza, Carlo and M\"uller, Reto},
     TITLE = {Perelman's entropy functional at {T}ype {I} singularities of
              the {R}icci flow},
   JOURNAL = {J. Reine Angew. Math.},
  FJOURNAL = {Journal f\"ur die Reine und Angewandte Mathematik. [Crelle's
              Journal]},
    VOLUME = {703},
      YEAR = {2015},
     PAGES = {173--199},
      ISSN = {0075-4102,1435-5345},
   MRCLASS = {53C44},
  MRNUMBER = {3353546},
MRREVIEWER = {Anqiang\ Zhu},
       DOI = {10.1515/crelle-2013-0039},
       URL = {https://doi-org.proxy.libraries.rutgers.edu/10.1515/crelle-2013-0039},
}

@book {matsuki,
    AUTHOR = {Matsuki, Kenji},
     TITLE = {Introduction to the {M}ori program},
    SERIES = {Universitext},
 PUBLISHER = {Springer-Verlag, New York},
      YEAR = {2002},
     PAGES = {xxiv+478},
      ISBN = {0-387-98465-8},
   MRCLASS = {14E30 (14-02 14E05 14J17 14J30)},
  MRNUMBER = {1875410},
MRREVIEWER = {Massimiliano\ Mella},
       DOI = {10.1007/978-1-4757-5602-9},
       URL = {https://doi.org/10.1007/978-1-4757-5602-9},
}

@book {McDuffSalamon,
    AUTHOR = {McDuff, Dusa and Salamon, Dietmar},
     TITLE = {Introduction to symplectic topology},
    SERIES = {Oxford Graduate Texts in Mathematics},
   EDITION = {Third},
 PUBLISHER = {Oxford University Press, Oxford},
      YEAR = {2017},
     PAGES = {xi+623},
      ISBN = {978-0-19-879490-5; 978-0-19-879489-9},
   MRCLASS = {53D35 (53D40 57R17 57R57 57R58)},
  MRNUMBER = {3674984},
MRREVIEWER = {Hansj\"org\ Geiges},
       DOI = {10.1093/oso/9780198794899.001.0001},
       URL = {https://doi.org/10.1093/oso/9780198794899.001.0001},
}

@article {mumford,
    AUTHOR = {Mumford, David},
     TITLE = {The topology of normal singularities of an algebraic surface
              and a criterion for simplicity},
   JOURNAL = {Inst. Hautes \'Etudes Sci. Publ. Math.},
  FJOURNAL = {Institut des Hautes \'Etudes Scientifiques. Publications
              Math\'ematiques},
    NUMBER = {9},
      YEAR = {1961},
     PAGES = {5--22},
      ISSN = {0073-8301,1618-1913},
   MRCLASS = {14.18 (14.55)},
  MRNUMBER = {153682},
MRREVIEWER = {T.\ Matsusaka},
       URL = {http://www.numdam.org/item?id=PMIHES_1961__9__5_0},
}

@misc{NO,
      title={Linear stability and instability of K\"ahler Ricci solitons}, 
      author={Keaton Naff and Tristan Ozuch},
      year={2025},
      eprint={2511.15885},
      archivePrefix={arXiv},
      primaryClass={math.DG},
      url={https://arxiv.org/abs/2511.15885}, 
}

@article {OO,
    AUTHOR = {Ohta, Hiroshi and Ono, Kaoru},
     TITLE = {Simple singularities and symplectic fillings},
   JOURNAL = {J. Differential Geom.},
  FJOURNAL = {Journal of Differential Geometry},
    VOLUME = {69},
      YEAR = {2005},
    NUMBER = {1},
     PAGES = {1--42},
      ISSN = {0022-040X,1945-743X},
   MRCLASS = {53D35 (32S45 53D45 57R17)},
  MRNUMBER = {2169581},
MRREVIEWER = {Paolo\ Lisca},
       DOI = {10.4310/jdg/1121540338},
       URL = {https://doi-org.proxy.libraries.rutgers.edu/10.4310/jdg/1121540338},
}

@book {SchwarzBook,
    AUTHOR = {Schwarz, G\"unter},
     TITLE = {Hodge decomposition---a method for solving boundary value
              problems},
    SERIES = {Lecture Notes in Mathematics},
    VOLUME = {1607},
 PUBLISHER = {Springer-Verlag, Berlin},
      YEAR = {1995},
     PAGES = {viii+155},
      ISBN = {3-540-60016-7},
   MRCLASS = {58G20 (35J99 35N10 47N20 58A14)},
  MRNUMBER = {1367287},
MRREVIEWER = {Robert\ McOwen},
       DOI = {10.1007/BFb0095978},
       URL = {https://doi.org/10.1007/BFb0095978},
}

@article {SesumTian,
    AUTHOR = {Sesum, Natasa and Tian, Gang},
     TITLE = {Bounding scalar curvature and diameter along the {K}\"ahler
              {R}icci flow (after {P}erelman)},
   JOURNAL = {J. Inst. Math. Jussieu},
  FJOURNAL = {Journal of the Institute of Mathematics of Jussieu. JIMJ.
              Journal de l'Institut de Math\'ematiques de Jussieu},
    VOLUME = {7},
      YEAR = {2008},
    NUMBER = {3},
     PAGES = {575--587},
      ISSN = {1474-7480,1475-3030},
   MRCLASS = {53C44 (53C55)},
  MRNUMBER = {2427424},
MRREVIEWER = {Julien\ Keller},
       DOI = {10.1017/S1474748008000133},
       URL = {https://doi.org/10.1017/S1474748008000133},
}

@misc{SunZhang,
      title={K\"ahler-Ricci shrinkers and Fano fibrations}, 
      author={Song Sun and Junsheng Zhang},
      year={2025},
      eprint={2410.09661},
      archivePrefix={arXiv},
      primaryClass={math.DG},
      url={https://arxiv.org/abs/2410.09661}, 
}

@article {SSW,
    AUTHOR = {Song, Jian and Sz\'ekelyhidi, G\'abor and Weinkove, Ben},
     TITLE = {The {K}\"ahler-{R}icci flow on projective bundles},
   JOURNAL = {Int. Math. Res. Not. IMRN},
  FJOURNAL = {International Mathematics Research Notices. IMRN},
      YEAR = {2013},
    NUMBER = {2},
     PAGES = {243--257},
      ISSN = {1073-7928,1687-0247},
   MRCLASS = {53C44 (57R22)},
  MRNUMBER = {3010688},
MRREVIEWER = {Bang\ Xiao},
       DOI = {10.1093/imrn/rnr265},
       URL = {https://doi-org.proxy.libraries.rutgers.edu/10.1093/imrn/rnr265},
}

@article {SWHirz,
    AUTHOR = {Song, Jian and Weinkove, Ben},
     TITLE = {The {K}\"ahler-{R}icci flow on {H}irzebruch surfaces},
   JOURNAL = {J. Reine Angew. Math.},
  FJOURNAL = {Journal f\"ur die Reine und Angewandte Mathematik. [Crelle's
              Journal]},
    VOLUME = {659},
      YEAR = {2011},
     PAGES = {141--168},
      ISSN = {0075-4102,1435-5345},
   MRCLASS = {53C44 (32Q15 53C23)},
  MRNUMBER = {2837013},
MRREVIEWER = {Kai\ Zheng},
       DOI = {10.1515/CRELLE.2011.071},
       URL = {https://doi-org.proxy.libraries.rutgers.edu/10.1515/CRELLE.2011.071},
}

@article {SongExt,
    AUTHOR = {Song, Jian},
     TITLE = {Finite-time extinction of the {K}\"ahler-{R}icci flow},
   JOURNAL = {Math. Res. Lett.},
  FJOURNAL = {Mathematical Research Letters},
    VOLUME = {21},
      YEAR = {2014},
    NUMBER = {6},
     PAGES = {1435--1449},
      ISSN = {1073-2780,1945-001X},
   MRCLASS = {53C44 (53C55)},
  MRNUMBER = {3335855},
MRREVIEWER = {Casey\ Lynn\ Kelleher},
       DOI = {10.4310/MRL.2014.v21.n6.a12},
       URL = {https://doi-org.proxy.libraries.rutgers.edu/10.4310/MRL.2014.v21.n6.a12},
}

@incollection {songnotes,
    AUTHOR = {Song, Jian and Weinkove, Ben},
     TITLE = {An introduction to the {K}\"ahler-{R}icci flow},
 BOOKTITLE = {An introduction to the {K}\"ahler-{R}icci flow},
    SERIES = {Lecture Notes in Math.},
    VOLUME = {2086},
     PAGES = {89--188},
 PUBLISHER = {Springer, Cham},
      YEAR = {2013},
      ISBN = {978-3-319-00818-9; 978-3-319-00819-6},
   MRCLASS = {53C44 (32Q15 53C55)},
  MRNUMBER = {3185333},
       DOI = {10.1007/978-3-319-00819-6\_3},
       URL = {https://doi-org.proxy.libraries.rutgers.edu/10.1007/978-3-319-00819-6_3},
}

@article {SW1,
    AUTHOR = {Song, Jian and Weinkove, Ben},
     TITLE = {Contracting exceptional divisors by the {K}\"ahler-{R}icci
              flow},
   JOURNAL = {Duke Math. J.},
  FJOURNAL = {Duke Mathematical Journal},
    VOLUME = {162},
      YEAR = {2013},
    NUMBER = {2},
     PAGES = {367--415},
      ISSN = {0012-7094,1547-7398},
   MRCLASS = {53C44 (14E05 32Q20)},
  MRNUMBER = {3018957},
MRREVIEWER = {Julien\ Keller},
       DOI = {10.1215/00127094-1962881},
       URL = {https://doi.org/10.1215/00127094-1962881},
}

@article {tiancalabi,
    AUTHOR = {Tian, G.},
     TITLE = {On {C}alabi's conjecture for complex surfaces with positive
              first {C}hern class},
   JOURNAL = {Invent. Math.},
  FJOURNAL = {Inventiones Mathematicae},
    VOLUME = {101},
      YEAR = {1990},
    NUMBER = {1},
     PAGES = {101--172},
      ISSN = {0020-9910,1432-1297},
   MRCLASS = {32L07 (32F07 53C25 53C55)},
  MRNUMBER = {1055713},
MRREVIEWER = {M.\ Kalka},
       DOI = {10.1007/BF01231499},
       URL = {https://doi.org/10.1007/BF01231499},
}

@article {TianZhu02,
    AUTHOR = {Tian, Gang and Zhu, Xiaohua},
     TITLE = {A new holomorphic invariant and uniqueness of
              {K}\"ahler-{R}icci solitons},
   JOURNAL = {Comment. Math. Helv.},
  FJOURNAL = {Commentarii Mathematici Helvetici},
    VOLUME = {77},
      YEAR = {2002},
    NUMBER = {2},
     PAGES = {297--325},
      ISSN = {0010-2571,1420-8946},
   MRCLASS = {32Q20 (53C25 53C55)},
  MRNUMBER = {1915043},
MRREVIEWER = {Peng\ Lu},
       DOI = {10.1007/s00014-002-8341-3},
       URL = {https://doi.org/10.1007/s00014-002-8341-3},
}

@article {TianZhu07,
    AUTHOR = {Tian, Gang and Zhu, Xiaohua},
     TITLE = {Convergence of {K}\"ahler-{R}icci flow},
   JOURNAL = {J. Amer. Math. Soc.},
  FJOURNAL = {Journal of the American Mathematical Society},
    VOLUME = {20},
      YEAR = {2007},
    NUMBER = {3},
     PAGES = {675--699},
      ISSN = {0894-0347,1088-6834},
   MRCLASS = {53C44 (53C55)},
  MRNUMBER = {2291916},
MRREVIEWER = {Julien\ Keller},
       DOI = {10.1090/S0894-0347-06-00552-2},
       URL = {https://doi.org/10.1090/S0894-0347-06-00552-2},
}

@article {TZ,
    AUTHOR = {Tosatti, Valentino and Zhang, Yuguang},
     TITLE = {Finite time collapsing of the {K}\"ahler-{R}icci flow on
              threefolds},
   JOURNAL = {Ann. Sc. Norm. Super. Pisa Cl. Sci. (5)},
  FJOURNAL = {Annali della Scuola Normale Superiore di Pisa. Classe di
              Scienze. Serie V},
    VOLUME = {18},
      YEAR = {2018},
    NUMBER = {1},
     PAGES = {105--118},
      ISSN = {0391-173X,2036-2145},
   MRCLASS = {53C44 (32Q15 53C55)},
  MRNUMBER = {3783785},
MRREVIEWER = {Frederick\ Tsz-Ho\ Fong},
}

@article {WangZhu,
    AUTHOR = {Wang, Xu-Jia and Zhu, Xiaohua},
     TITLE = {K\"ahler-{R}icci solitons on toric manifolds with positive
              first {C}hern class},
   JOURNAL = {Adv. Math.},
  FJOURNAL = {Advances in Mathematics},
    VOLUME = {188},
      YEAR = {2004},
    NUMBER = {1},
     PAGES = {87--103},
      ISSN = {0001-8708,1090-2082},
   MRCLASS = {53C25 (32Q20)},
  MRNUMBER = {2084775},
MRREVIEWER = {Christina\ W.\ T\o nnesen-Friedman},
       DOI = {10.1016/j.aim.2003.09.009},
       URL = {https://doi.org/10.1016/j.aim.2003.09.009},
}

@misc{XuZhang,
      title={Finite Time Singularities of Collapsing K\"ahler Ricci Flow on Ruled Surfaces}, 
      author={Tongxin Xu and Zhenlei Zhang},
      year={2026},
      eprint={2609.01442},
      archivePrefix={arXiv},
      primaryClass={math.DG},
      url={https://arxiv.org/abs/2609.01442}, 
}
\end{document}